\numberwithin{equation}{section}
\newtheorem{thm}{Theorem}[section]
\newtheorem{prp}[thm]{Proposition}
\newtheorem{lem}[thm]{Lemma}
\newtheorem{cor}[thm]{Corollary}
\newtheorem{thm-intro}{Theorem}
\newtheorem{cor-intro}[thm-intro]{Corollary}
\theoremstyle{definition}
\newtheorem{dfn}[thm]{Definition}
\newtheorem{rmk}[thm]{Remark}
\theoremstyle{remark}
\newcommand{\vb}{\,|\,}
\newcommand{\Z}{\mathbb{Z}}
\newcommand{\R}{\mathbb{R}}
\newcommand{\cA}{\mathcal{A}}
\newcommand{\cB}{\mathcal{B}}
\newcommand{\cC}{\mathcal{C}}
\newcommand{\cD}{\mathcal{D}}
\newcommand{\cW}{\mathcal{W}}
\newcommand{\cM}{\mathcal{M}}
\newcommand{\colim}{\textup{colim}}
\newcommand{\hocolim}{\textup{hocolim}}
\newcommand{\Perf}{\textup{Perf}}
\newcommand{\Tw}{\textup{Tw}}
\newcommand{\Cyl}{\textup{Cyl}}
\newcommand{\dgCat}{\textup{dgCat}}
\newcommand{\dgAlg}{\textup{dgAlg}}
\newcommand{\sCat}{\textup{dgCat}_s}
\newcommand{\Ob}{\mathrm{Ob}}
\newcommand{\Mor}{\mathrm{Mor}}
\newcommand{\Ho}{\textup{Ho}}
\title{A computational approach to the homotopy theory of dg categories}
\author{Dogancan Karabas and Sangjin Lee}
\begin{document}
	
\begin{abstract}
	We give a specific cylinder functor for semifree dg categories. This allows us to construct a homotopy colimit functor explicitly. 
	These two functors are ``computable", specifically, the constructed cylinder functor sends a dg category of strictly finite type, i.e., a semifree dg category having finitely many objects and generating morphisms, to a dg category of strictly finite type.
	The homotopy colimit functor has a similar property.  
	Moreover, using the cylinder functor, we give a cofibration category of semifree dg categories and that of dg categories of strictly finite type, independently from the work of Tabuada \cite{tabuada-model}.
	All the results similarly work for semifree dg algebras.
	We also describe an application to symplectic topology and provide a toy example.
\end{abstract}
	
\date{\today}
\maketitle
\tableofcontents

\section{Introduction}
\label{section introduction}

Homotopy theory, originating in algebraic topology, plays a pivotal role in numerous areas of modern mathematics. Specifically, the homotopy theory of differential graded (dg) categories is prominently featured in fields such as algebraic geometry, representation theory, higher categories, and symplectic topology. To explore their homotopy theory up to various weak equivalences, Tabuada \cite{tabuada-model,tabuada-model-morita} introduced {\em model structures} for the category of dg categories. These model structures come with auxiliary functors called cofibrations and fibrations, and provide two {\em functorial factorizations} of functors: the first type factors a functor into a cofibration followed by a weak equivalence, and the second type factors it into a weak equivalence followed by a fibration.

There is another approach to the homotopy theory of dg categories without using a model structure: Starting with cofibrations, instead of considering the entire factorization data, one can focus solely on the functorial factorization of codiagonal maps into a cofibration and a weak equivalence. This functorial factorization yields a construction of a {\em cylinder functor} (Definition \ref{dfn:cylinder-functor}). Once additional axioms (see Definition \ref{dfn:i-category}) are established, this construction recovers an entire functorial factorization of the first type, representing ``half" of a model structure known as a {\em cofibration category} in the sense of \cite{baues}. Therefore, constructing a cylinder functor allows us to delve into the homotopy theory of dg categories, specifically allowing us to describe the {\em homotopy colimit functor} on diagrams of dg categories.

In the current paper, we present a {\em simple construction of a cylinder functor} on the category of dg categories, expanding upon our earlier work \cite{hocolim} where the cylinder functor was defined solely at the level of objects.
Moreover, our cylinder functor leads us to a {\em simple homotopy colimit functor} and establishes an {\em $I$-category} and a {\em cofibration category} of dg categories, offering a computational approach to the homotopy theory of dg categories.
Detailed results can be found in Theorems \ref{thm-intro:cyl}--\ref{thm-intro:model}, which will be discussed following the introduction of our framework. We will also comment on the simplicity of our constructions.

Throughout this paper, we mostly work within the context of the category of semifree dg categories over a commutative ring $k$, denoted by $\sCat$ (see Definition \ref{dfn:semifree}). We note that every dg category has a semifree resolution (as discussed in \cite{drinfeld}), emphasizing the significance of our focus on $\sCat$. Furthermore, we fix weak equivalences as quasi-equivalences, pretriangulated equivalences, or Morita equivalences.

\begin{thm-intro}\label{thm-intro:cyl}[Corollary \ref{cor:cyl-functor}]
	Let $\Cyl\colon\dgCat_s\to\dgCat_s$ be a functor together with the natural transformations $i_1,i_2\colon 1_{\dgCat_s}\Rightarrow \Cyl$ and $p\colon\Cyl\Rightarrow 1_{\dgCat_s}$ defined as follows:
	\begin{itemize}
		\item $\Cyl(\cC)$ for any $\cC\in\dgCat_s$ and the natural transformations are as in Theorem \ref{thm:cylinder-object},
		\item $\Cyl(F)$ for any morphism (dg functor) $F\colon\cA\to\cB$ in $\dgCat_s$ is as in Theorem \ref{thm:cylinder-morphism}.
	\end{itemize}
	Then, $\Cyl$ is a cylinder functor, which means the following conditions are satisfied:
	\begin{itemize}
			\item $p\circ (i_1\amalg i_2)\colon \cC\amalg\cC\to \Cyl(\cC)\to \cC$ is the codiagonal of $\cA$,
			\item $i_1\amalg i_2\colon\cC\amalg\cC\to\Cyl(\cC)$ is a cofibration,
			\item $p\colon\Cyl(\cC)\to\cC$ is a weak equivalence and a fibration.
	\end{itemize}
\end{thm-intro}
We note that our cylinder functor differs significantly from the natural cylinder functor induced by Tabuada's model structures. Tabuada employed Quillen's ``small object argument" (as detailed in Hovey \cite{hovey}) to establish a functorial factorization, which results in a cylinder functor as a special case. However, this method (or its refinement by Garner \cite{garner}) involves a transfinite construction that is overly complicated from a computational standpoint.

In contrast, our cylinder functor $\Cyl$ ensures that the number of generating morphisms of a semifree dg category 
$\cC$ and its image $\Cyl(\cC)$ are close in size. Specifically, if the former is finite, the latter is also finite, a property that does not hold for Tabuada's natural cylinder. This effectiveness arises from the construction of $\Cyl$, which is specifically tailored for semifree dg categories. Moreover, the computability of our cylinder functor ensures that all our functorial factorizations are computable as can be seen in Theorem \ref{thm:mapping-cylinder-factorisation}.

Using the cylinder functor described earlier, we can construct the homotopy colimit (pushout) functor for the category of dg categories, where weak equivalences are defined as quasi-equivalences, pretriangulated equivalences, or Morita equivalences:
\begin{thm-intro}\label{thm-intro:hocolim}[Theorem \ref{thm:hocolim-functor-dg}]
	Let $J$ be a category of the form $a\leftarrow c\rightarrow b$, and $\dgCat^J$ denote the category of $J$-diagrams in $\dgCat$. Then, the homotopy colimit functor
	\[\hocolim\colon\Ho(\dgCat^J)\to\Ho(\dgCat)\]
	can be given such that $\hocolim(\cC)$ for any $\cC\in\dgCat_s$ is defined as in Theorem \ref{thm:hocolim-functor-dg}\eqref{item:hocolim-obj}, and $\hocolim(F)$ for any morphism (dg functor) $F\colon\cA\to\cB$ in $\dgCat_s$ is defined as in Theorem \ref{thm:hocolim-functor-dg}\eqref{item:hocolim-mor}.
\end{thm-intro}
Our homotopy colimit functor is simple in the sense that it produces a semifree dg category of a size close to the total sizes of semifree dg categories in a given diagram. Specifically, the finiteness of the number of generators is preserved under our homotopy colimit functor. In contrast, if we compare this to the Grothendieck construction for homotopy colimit functor (refer to \cite{gps2}), our method offers several computational advantages. The Grothendieck construction is not explicit; rather, it is expressed as a localization of a non-semifree dg category, which often leads to computational difficulties.

We note that the description in Theorem \ref{thm-intro:hocolim} still holds for the diagrams of the form $\cA\leftarrow \cC\rightarrow \cB$, where $\cA$ and $\cB$ are not necessarily semifree, if $k$ has flat dimension zero (e.g., if $k$ is a field). See Remark \ref{rmk:hocolim-non-semifree} for details.

Moreover, the constructions in Theorems \ref{thm-intro:cyl} and \ref{thm-intro:hocolim} can be adapted for scenarios where the input of the functors is a localization of a semifree dg category. Please refer to Theorems \ref{thm:cylinder-loc}, \ref{thm:cylinder-morphism-loc}, and \ref{thm:hocolim-functor-dg-loc} for more details on these adjustments. The usage of Theorem \ref{thm:hocolim-functor-dg-loc} is demonstrated through computations for the case $n=2$ in Sections \ref{sec:wfuk-sphere} and \ref{sec:reflection}.

As mentioned earlier, the constructed cylinder functor can define a ``half" of a model structure, or more precisely, a cofibration category structure on $\dgCat_s$. This structure differs from that induced by \cite{tabuada-model} due to its simpler functorial factorization:
\begin{thm-intro}\mbox{}
	\label{thm-intro:model}
	\begin{enumerate}
		\item (Theorem \ref{thm:dgcat-I-category}) The category of semifree dg categories $\textup{dgCat}_s$ is an $I$-category with the structure $(\mathrm{cof},I)$, which is defined as follows:
		\begin{itemize}
			\item $\mathrm{cof}:$ Cofibrations are semifree extensions.
			
			\item $I$: The functor $I$ is the cylinder functor $\Cyl\colon\textup{dgCat}_s \to\textup{dgCat}_s$ from Theorem \ref{thm-intro:cyl}.
		\end{itemize}
		
		\item (Theorem \ref{thm:dgcat-cofibration-category}) The category of semifree dg categories $\textup{dgCat}_s$ forms a cofibration category, where weak equivalences are quasi-equivalences and cofibrations are semifree extensions. Moreover, every object in $\textup{dgCat}_s$ is both fibrant and cofibrant. In particular, $\textup{dgCat}_s$ makes a category of cofibrant objects.
	\end{enumerate}
\end{thm-intro}
Theorems \ref{thm-intro:cyl}--\ref{thm-intro:model} hold in the setting of semifree dg algebras with slight modifications, see Remarks \ref{rmk:cylinder-functor-dg-algebras} and \ref{rmk:hocolim-dg-algebras}, and Theorem \ref{thm:i-cof-cat-dg-algebras}.

We note that Theorem \ref{thm-intro:model} is not necessary for constructions such as Theorem \ref{thm-intro:hocolim}, as we can use our cylinder functor within Tabuada's model structures to carry out homotopy theory. However, we have established additional axioms for the constructed cylinder functor to explicitly construct a cofibration category of semifree dg categories, which is distinct from the approach in \cite{tabuada-model}. In \cite{tabuada-model}, Quillen's small object argument is employed to construct a model structure and establish the existence of factorizations, but they are not effective or concretely expressible in practice. In contrast, our cylinder functor is explicit and computable, allowing for explicit and computable constructions within the cofibration category $\sCat$. Consequently, we can derive Theorem \ref{thm-intro:hocolim} from Theorem \ref{thm-intro:model} without relying on \cite{tabuada-model}.

Moreover, we can directly restrict the cofibration category structure given in Theorem \ref{thm-intro:model} to dg categories of strictly finite type, i.e., semifree dg categories with finitely many objects and generating morphisms, as the factorizations preserve finiteness. See Remark \ref{rmk:finite-type}. The same is not true for the model structure of Tabuada \cite{tabuada-model}, as its given factorizations do not preserve finiteness.
We note that dg categories of strictly finite type are, in particular, of finite type. 
Refer to \cite{dgcat} and \cite{Kontsevich09} for the definition of dg categories of finite type.
	
One can find a direct application of the above results in symplectic topology.

Symplectic manifolds are associated with a powerful symplectic invariant known as the {\em Fukaya category}, defined as an $A_{\infty}$-category in \cite{Fukaya-Oh-Ohta-Ono09a, Fukaya-Oh-Ohta-Ono09b}.
Computing the Fukaya category is typically a notoriously challenging task.
However, Ganatra, Pardon, and Shende \cite{gps1, gps2} proved that for certain open symplectic manifolds (referred as {\em Weinstein manifolds}), the (wrapped) Fukaya category can be computed, after a choice of a covering, as a homotopy colimit of the wrapped Fukaya categories of the covering elements.

Furthermore, given the relation of wrapped Fukaya categories to microlocal sheaves by \cite{gps3}, Nadler \cite{arboreal,nonchar} has shown that the wrapped Fukaya category of each covering element can be regarded as a semifree dg category with finitely many objects and generating morphisms (a dg category of strictly finite type) up to pretriangulated (or Morita) equivalence. This result highlights the significance of Theorem \ref{thm-intro:hocolim} in facilitating such computations.
	
Now, let us consider a symplectomorphism $\phi: W_1 \to W_2$ between two symplectic manifolds. 
It is known that $\phi$ induces a functor between Fukaya categories of $W_1$ and $W_2$. 
If $\phi$ respects the homotopy colimit diagrams that compute Fukaya categories of $W_1$ and $W_2$, one can obtain a specific description of the induced functor using Theorem \ref{thm-intro:hocolim}.
For a detailed example and the symplectic topological motivations behind this discussion, please refer to Section \ref{sec:sphere-reflection}.

	\subsection{Acknowledgment}
	\label{subsection acknowledgment}
	We are grateful to Ezra Getzler for his insightful comments, which inspired the content of Section \ref{sec:cofibration-category}. 
	
	The first-named author is supported by World Premier International Research Center Initiative (WPI), MEXT, Japan. The second-named author is supported by a KIAS Individual Grant (MG094401) at Korea Institute for Advanced Study.

\section{Preliminaries on dg categories}
\label{section preliminaries}

A {\em differential graded (dg) category} is a category enriched over the symmetric monoidal category of complexes over a fixed commutative ring $k$.
It can also be viewed as an $A_{\infty}$-category in which compositions of order greater than 2 are set to vanish.
For further details on dg categories, readers may refer to \cite{dgcat}, and for a review of $A_{\infty}$-categories, one can consult \cite{seidel}.
We use $d$ for the differential and
$\circ$ for compositions of morphisms, and we omit the latter whenever it is convenient.
When introducing a dg category, we follow the convention of providing the following five items:
\begin{enumerate}[label = (\roman*)]
	\item {\em Objects:} We list the objects in the category.
	\item {\em Generating morphisms:} We give a set of generating morphisms. They generate all the morphisms as an algebra, not as a module. We will not explicitly mention the existence of identity morphisms, but it should be understood that every object has the identity endomorphism.
	\item {\em Degrees:} For each generating morphism, we specify its degree.
	\item {\em Differentials:} For each generating morphism, we specify its differential.
	\item {\em Relations:} We specify the relations between generating morphisms. This item will be omitted if the generating morphisms freely generate all other morphisms.
\end{enumerate}

Given a dg category $\cC$, we denote by $\Ob\,\cC$ (or simply $\cC$ when it is clear from the context) the collection of objects in $\cC$, and by $\Mor\,\cC$ the collection of morphisms in $\cC$. We use $\hom^*_{\cC}(A,B)$ (or simply $\hom^*(A,B)$) to represent the cochain complex of morphisms between the objects $A$ and $B$ of $\cC$.

Next, we introduce a specific class of dg categories and dg functors that will be fundamental throughout the paper. For further details, readers can refer to \cite{hocolim}.

\begin{dfn}\label{dfn:semifree}\mbox{}
	\begin{enumerate}
		\item A (small) dg category $\cC$ is called a {\em semifree dg category} if its morphisms, treated as an algebra, are freely generated by a set of morphisms $\{f_i\}$ (indexed by an ordinal), with the condition that $df_i$ is generated by the set $\{f_j\vb j<i\}$. In this case, $\{f_i\}$ is called a set of {\em generating morphisms} of $\cC$.
		
		\item\label{item:semifree-extension} A dg functor $F\colon \cC\to\cD$ is called a {\em semifree extension} by a set of objects $R$ and a set of morphisms $S=\{f_i\}$ if it satisfies the following conditions:
		\begin{itemize}
			\item $F$ is an inclusion.
			
			\item The objects of $F(\cC)$, along with $R$, form the objects of $\cD$.
			
			\item The morphisms of $\cD$, treated as an algebra, can be expressed as a free extension of the morphisms of $F(\cC)$ by $\{f_i\}$ (indexed by an ordinal), with the condition that $df_i$ is generated by the morphisms of $F(\cC)$ and $\{f_j\vb j<i\}$.
		\end{itemize}
		
		\item A dg category $\cD$ is called a {\em semifree extension} of a dg category $\cC$ by a set of objects $R$ and a set of morphisms $S$ if there exists a semifree extension $F\colon\cC\to\cD$ as in Definition \ref{dfn:semifree}\eqref{item:semifree-extension}.
	\end{enumerate}
\end{dfn}

Let $\dgCat$ denote the category of small dg categories, where morphisms are dg functors. We aim to invert certain dg functors, referred to as {\em weak equivalences}, in $\dgCat$. The resulting categories can be studied by introducing {\em model structures} on $\dgCat$, making $\dgCat$ a {\em model category}. See \cite{dwyer-spalinski}, \cite{hovey}, and \cite{hirschhorn} for a review of model categories. More precisely, upon inverting weak equivalences in $\dgCat$, we obtain the {\em homotopy category} $\Ho(\dgCat)$ of the model category $\dgCat$. Refer to Section \ref{subsec:hocolim-via-model} for further details.

For a given dg category $\cC$, we introduce the following notations:
\begin{itemize}
	\item $\Tw\,\cC$ is the dg category of twisted complexes in $\cC$, which is a pretriangulated envelope of $\cC$.
	
	\item $\Perf\,\cC$ is the split-closure (or idempotent completion) of $\Tw\,\cC$.
\end{itemize}
See \cite{seidel} for more details. With these notations, we can state the following theorem:

\begin{thm}[\cite{tabuada-model,tabuada-model-morita}]\label{thm:dgcat-model} The category $\dgCat$ admits the following model structures:
	\begin{enumerate}
		\item \label{item:dwyer-kan-model} {\em Dwyer-Kan model structure:} Weak equivalences are dg functors that are quasi-equivalences, and any dg category is a fibrant object.
		
		\item {\em Quasi-equiconic model structure:} Weak equivalences are {\em pretriangulated equivalences}, which are dg functors $\cC\to\cD$ that induce a quasi-equivalence $\Tw\,\cC\to\Tw\,\cD$, and fibrant objects are pretriangulated dg categories.
		
		\item {\em Morita model structure:} Weak equivalences are {\em Morita equivalences}, which are dg functors $\cC\to\cD$ that induce a quasi-equivalence $\Perf\,\cC\to\Perf\,\cD$, and fibrant objects are idempotent complete pretriangulated dg categories.
	\end{enumerate}
	All three model structures have the same cofibrations, which are retracts of semifree extensions. Consequently, they also share the same cofibrant objects, which are retracts of semifree dg categories.
\end{thm}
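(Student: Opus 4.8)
The plan is to realize each of the three model structures as a cofibrantly generated model structure, obtained from Kan's recognition theorem (for instance \cite[Theorem~2.1.19]{hovey}), the functorial factorizations then being produced by the small object argument. For each structure I would specify a set $I$ of generating cofibrations and a set $J$ of generating trivial cofibrations with small domains, which is immediate, and then verify the hypotheses of that theorem. The routine ones are that the prescribed weak equivalences satisfy two-out-of-three and are closed under retracts --- elementary for quasi-equivalences, and, since $\Tw$ and $\Perf$ are functors, also for pretriangulated and Morita equivalences --- while the substantive ones are that every relative $J$-cell complex is a weak equivalence and that a dg functor has the right lifting property against $I$ precisely when it is a fibration and a weak equivalence.

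For the Dwyer--Kan structure I would take $I=\{\,\emptyset\to\underline{k}\,\}\cup\{\,\cS^{n-1}\to\cD^{n}\mid n\in\Z\,\}$, where $\underline{k}$ is the dg category with one object and endomorphism complex $k$, $\cS^{n-1}$ is a two-object dg category with a single non-trivial hom-complex equal to $k$ concentrated in one degree, and $\cD^{n}$ replaces that hom-complex by an acyclic two-term complex $k\xrightarrow{\ \mathrm{id}\ }k$; pushing out $\emptyset\to\underline{k}$ adjoins an object, and pushing out $\cS^{n-1}\to\cD^{n}$ freely adjoins one generating morphism whose differential is a prescribed cocycle. For $J$ I would take the inclusion $\underline{k}\to\widetilde{\mathcal K}$ of one object into a semifree model of the ``walking homotopy equivalence'' on two objects, together with the inclusions of two-object dg categories into ones with contractible hom-complexes; the former makes the essential surjectivity witnessing a quasi-equivalence liftable, the latter makes contracting homotopies liftable. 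The load-bearing step, and the one I expect to be the main obstacle, is showing that relative $J$-cell complexes are quasi-equivalences: this forces one to compute the hom-complexes of the pushouts of the maps of $J$ along arbitrary dg functors, and colimits of dg categories are combinatorially awkward because free products introduce words in the generators, so one must argue carefully that adjoining a contractible block, or the coherence data of $\widetilde{\mathcal K}$, leaves the homotopy category unchanged.

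Once the Dwyer--Kan structure is established, I would identify its cofibrations as the retracts of relative $I$-cell complexes, and then observe that, by the description of the pushouts above, a relative $I$-cell complex is exactly a transfinite composite of steps each adjoining an object or adjoining a generating morphism whose differential lies among the morphisms already present --- that is, precisely a semifree extension in the sense of Definition \ref{dfn:semifree} --- while conversely every semifree extension is of this form. Hence the cofibrations are the retracts of semifree extensions, and the cofibrant objects, being the cofibrations with source $\emptyset$, are the retracts of semifree dg categories; since the two remaining structures are set up with the same $I$, this accounts for the last two sentences of the statement.

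Finally, I would obtain the quasi-equiconic and Morita structures by keeping the same cofibrations and enlarging the weak equivalences to pretriangulated, respectively Morita, equivalences --- either as a (left Bousfield-style) localization of the Dwyer--Kan structure at the dg functors $\cC\to\Tw\,\cC$, respectively $\cC\to\Perf\,\cC$, or, following Tabuada and sidestepping the fact that $\dgCat$ is not left proper, by enlarging $J$ to a set of generating trivial cofibrations detecting the new weak equivalences and re-running Kan's recognition theorem. In either route, beyond the pushout computations already mentioned, the remaining work is to verify that the enlarged structure exists and to identify its fibrant objects with the pretriangulated, respectively idempotent complete pretriangulated, dg categories, by checking that these are exactly the dg categories admitting lifts against the enlarged $J$.
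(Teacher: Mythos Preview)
The paper does not prove this theorem: it is stated as a preliminary result and attributed entirely to Tabuada \cite{tabuada-model,tabuada-model-morita}, with no proof given in the paper itself. So there is no ``paper's own proof'' to compare your proposal against.

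That said, your outline is essentially the strategy of the cited references: Tabuada constructs the Dwyer--Kan structure via Kan's recognition theorem with exactly the generating sets you describe, and the hardest step is indeed verifying that pushouts along the inclusion $\underline{k}\to\widetilde{\mathcal K}$ into the walking homotopy equivalence are quasi-equivalences, which requires an explicit analysis of the resulting hom-complexes. The quasi-equiconic and Morita structures are then obtained, as you indicate, by enlarging the generating trivial cofibrations (Tabuada does not use left Bousfield localization directly, precisely because $\dgCat$ is not left proper in general). Your identification of $I$-cell complexes with semifree extensions is also correct and is the standard justification for the final two sentences of the statement. In short, your proposal is a faithful sketch of Tabuada's argument, which is all the paper is invoking here.
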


\begin{rmk}
	Any quasi-equivalence is a pretriangulated equivalence, and any pretriangulated equivalence is a Morita equivalence.
\end{rmk}

It is known that any morphism $\cC\to\cD$ in the homotopy category $\Ho(\cM)$ of a model category $\cM$ can be seen as a chain of objects and morphisms in $\cM$
\[\cC\xleftarrow{\sim}\cC'\to\cD'\xleftarrow{\sim}\cD\]
for some cofibrant object $\cC'$ and fibrant object $\cD'$, and arrows $\xleftarrow{\sim}$ are weak equivalences. Consequently, we can characterize the morphisms in $\Ho(\dgCat)$ through the following proposition:

\begin{prp}\label{prp:dgCat-morphisms}
	For given dg categories $\cC$ and $\cD$, a morphism $\cC\to\cD$ in $\Ho(\dgCat)$ can be characterized as a chain of dg categories and dg functors in the following ways:
	\begin{enumerate}
		\item $\cC\xleftarrow{\sim}\cC'\to\cD$, if $\dgCat$ is equipped with the Dwyer-Kan model structure.
		
		\item $\cC\xleftarrow{\sim}\cC'\to\Tw\,\cD$, if $\dgCat$ is equipped with the quasi-equiconic model structure.
		
		\item $\cC\xleftarrow{\sim}\cC'\to\Perf\,\cD$, if $\dgCat$ is equipped with the Morita model structure.
	\end{enumerate}
	Here, $\cC'$ is a cofibrant dg category, and $\xleftarrow{\sim}$ is a weak equivalence in the corresponding model structure.
\end{prp}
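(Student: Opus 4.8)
The plan is to derive all three cases from the general description of morphisms in the homotopy category of a model category recalled immediately before the statement, combined with the identification, in each of the three model structures of Theorem~\ref{thm:dgcat-model}, of a convenient fibrant replacement of $\cD$ built from the $\Tw$ and $\Perf$ constructions.

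\emph{Cofibrant replacement of the source.} Applying the factorization axiom of the Dwyer--Kan model structure to the map from the initial (empty) dg category to $\cC$ --- equivalently, taking a semifree resolution in the sense of \cite{drinfeld} --- produces a quasi-equivalence $\cC'\xrightarrow{\sim}\cC$ with $\cC'$ cofibrant. Since the three model structures of Theorem~\ref{thm:dgcat-model} share the same cofibrations, $\cC'$ is cofibrant in all of them; and since a quasi-equivalence is a pretriangulated equivalence and a pretriangulated equivalence is a Morita equivalence, the map $\cC'\xrightarrow{\sim}\cC$ is a weak equivalence in each of the three structures. Hence a single $\cC'$ serves as a cofibrant replacement of $\cC$ throughout.

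\emph{Fibrant replacement of the target.} In the Dwyer--Kan structure every dg category is fibrant, so $\cD$ itself is fibrant and the general zigzag description specializes directly to $\cC\xleftarrow{\sim}\cC'\to\cD$, proving (1). For the quasi-equiconic structure, $\Tw\,\cD$ is pretriangulated (being a pretriangulated envelope of $\cD$), hence fibrant; moreover the canonical inclusion $\cD\to\Tw\,\cD$ is a pretriangulated equivalence, because $\Tw\,\cD\to\Tw(\Tw\,\cD)$ is a quasi-equivalence (passing to twisted complexes is idempotent up to quasi-equivalence). Thus $\cD\to\Tw\,\cD$ exhibits $\Tw\,\cD$ as a fibrant replacement of $\cD$, so $\Hom_{\Ho(\dgCat)}(\cC,\cD)\cong\Hom_{\Ho(\dgCat)}(\cC,\Tw\,\cD)$, and the general description yields a representing zigzag $\cC\xleftarrow{\sim}\cC'\to\Tw\,\cD$, proving (2). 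The Morita case is formally identical with $\Perf\,\cD$ in place of $\Tw\,\cD$: the dg category $\Perf\,\cD$ is idempotent complete and pretriangulated, hence fibrant, and $\cD\to\Perf\,\cD$ is a Morita equivalence since $\Perf\,\cD\to\Perf(\Perf\,\cD)$ is a quasi-equivalence; this gives (3).

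Combining the two steps, every morphism $\cC\to\cD$ in the relevant homotopy category is represented by a zigzag of the asserted shape with $\cC'$ cofibrant, as claimed. I expect the only non-formal ingredient to be the two idempotency statements --- that $\Tw(-)$ and $\Perf(-)$ applied twice agree, up to quasi-equivalence, with a single application and land among the fibrant objects of the quasi-equiconic and Morita model structures respectively --- which are standard facts about twisted complexes and split-closures (see \cite{seidel}); everything else is a direct invocation of the model-category formalism.
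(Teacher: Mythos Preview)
Your proposal is correct and follows precisely the approach the paper intends: the paper states the proposition without proof, presenting it as an immediate consequence of the general zigzag description of morphisms in $\Ho(\cM)$ recalled just before the statement, together with the identification of fibrant objects in each model structure from Theorem~\ref{thm:dgcat-model}. Your argument simply makes explicit what the paper leaves implicit, and the two are essentially identical.
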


\begin{rmk}
	If $\cC$ is a cofibrant dg category, then we can replace each $\cC\xleftarrow{\sim}\cC'$ with $\cC$ in Proposition \ref{prp:dgCat-morphisms}.
\end{rmk}

Next, we introduce three distinct types of equivalency between dg categories, characterized by becoming isomorphic in the corresponding homotopy category $\Ho(\dgCat)$:

\begin{dfn} Let $\cC$ and $\cD$ be dg categories.
	\begin{enumerate}
		\item $\cC$ and $\cD$ are {\em quasi-equivalent} if there is a chain of dg categories and dg functors
		\[\cC\xleftarrow{\sim}\cC'\xrightarrow{\sim}\cD\]
		for some dg category $\cC'$, where each dg functor in the chain is a quasi-equivalence.
		
		\item $\cC$ and $\cD$ are {\em pretriangulated equivalent} if $\Tw\,\cC$ and $\Tw\,\cD$ are quasi-equivalent.
		
		\item $\cC$ and $\cD$ are {\em Morita equivalent} if $\Perf\,\cC$ and $\Perf\,\cD$ are quasi-equivalent.
	\end{enumerate}
\end{dfn}

As a result, we have two distinct types of generations for a dg category, defined as follows:

\begin{dfn} Let $\cC$ be a dg category. Let $\{L_i\}$ be a collection of objects in $\cC$. We say
	\begin{enumerate}		
		\item $\{L_i\}$ {\em generates} $\cC$ if the full dg subcategory of $\cC$ with the objects $\{L_i\}$ is pretriangulated equivalent to $\cC$,
		
		\item $\{L_i\}$ {\em split-generates} $\cC$ if the full dg subcategory of $\cC$ with the objects $\{L_i\}$ is Morita equivalent to $\cC$.
	\end{enumerate}
\end{dfn}

When $\cC$ is a dg category, and $S$ is a subset of closed degree zero morphisms in $\cC$, there exists a dg category $\cC[S^{-1}]$, known as the {\em dg localization} of $\cC$ at the morphisms in $S$. This localization is essentially obtained from $\cC$ by inverting morphisms in $S$. For a precise definition, one can refer to sources such as \cite{toen} or \cite{hocolim}. The dg localization is unique up to quasi-equivalence, and its existence is established in \cite{toen-morita}.

In the case where $\cC$ is a semifree dg category, we can explicitly describe $\cC[S^{-1}]$:

\begin{prp}[\cite{hocolim}]\label{prp:localise-semifree}
	When $\cC$ is a semifree dg category, and $S = \{g_i \colon A_i \to B_i\}$ is a subset of closed degree zero morphisms in $\cC$, the dg localization $\cC[S^{-1}]$ can be viewed as the semifree extension of $\cC$ by the morphisms $g'_i, \hat{g}_i, \check{g}_i, \bar{g}_i$
		\[\begin{tikzcd}
			A_i\ar[loop left,"\hat g_i"]\ar[r,"g_i"]\ar[r,"\bar g_i",bend left=60] & B_i\ar[l,"g'_i",bend left=30]\ar[loop right,"\check g_i"]
		\end{tikzcd}\]
		for each $i$, with the gradings
		\[|g_i'|=0,\qquad |\hat g_i|=|\check g_i|=-1,\qquad |\bar g_i|=-2,\]
		and with the differentials
		\[dg_i'=0,\qquad d\hat g_i=1_{A_i}-g_i'g_i,\qquad d\check g_i=1_{B_i}-g_ig_i',\qquad d\bar g_i=g_i\hat g_i - \check g_ig_i .\]
\end{prp}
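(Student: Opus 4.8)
\emph{Proof proposal.}
The plan is to recognise the dg category in the statement as an explicit pushout in $\dgCat$ which, being glued along a cofibration between cofibrant objects, is automatically a homotopy pushout, and then to identify that homotopy pushout with $\cC[S^{-1}]$ through the universal property of dg localisation. Write $\cC'$ for the semifree extension of $\cC$ described in the statement. Introduce two auxiliary dg categories: let $\mathcal{K}$ have objects $0,1$ and a single generating morphism $\iota\colon 0\to1$ with $|\iota|=0$ and $d\iota=0$, and let $\mathcal{K}'$ have the same objects and generating morphisms $\iota\colon0\to1$, $\iota'\colon1\to0$, $\hat\iota\colon0\to0$, $\check\iota\colon1\to1$, $\bar\iota\colon0\to1$ of degrees $0,0,-1,-1,-2$ with $d\iota=d\iota'=0$, $d\hat\iota=1_0-\iota'\iota$, $d\check\iota=1_1-\iota\iota'$, $d\bar\iota=\iota\hat\iota-\check\iota\iota$. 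Both are semifree --- the generators can be ordered so that each differential involves only earlier ones --- hence cofibrant in each of the model structures of Theorem \ref{thm:dgcat-model}; the inclusion $\mathcal{K}\hookrightarrow\mathcal{K}'$ is a semifree extension, hence a cofibration; and $\cC$ is cofibrant by hypothesis.

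The first, purely formal, step is the identification $\cC'\cong\cC\amalg_{\coprod_i\mathcal{K}}\coprod_i\mathcal{K}'$. Since each $g_i$ is a closed degree-zero morphism, there is a dg functor $G\colon\coprod_i\mathcal{K}\to\cC$ sending the $i$-th copy by $0\mapsto A_i$, $1\mapsto B_i$, $\iota\mapsto g_i$; forming the pushout of $\coprod_i\mathcal{K}\hookrightarrow\coprod_i\mathcal{K}'$ along $G$ creates no new objects (the two objects of each $\mathcal{K}'$ are glued onto $A_i,B_i$) and freely adjoins to $\Mor\,\cC$ the generators $\iota',\hat\iota,\check\iota,\bar\iota$ of each copy, with differentials transported along $G$; relabelling these $g_i',\hat g_i,\check g_i,\bar g_i$ recovers exactly the gradings and differentials in the statement. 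Being the pushout of a semifree extension, $\cC\hookrightarrow\cC'$ is again a semifree extension, which is the first assertion of the proposition.

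Next, because $\coprod_i\mathcal{K}\hookrightarrow\coprod_i\mathcal{K}'$ is a cofibration between cofibrant objects and $\cC$ is cofibrant, this pushout square is homotopy cocartesian. It remains to see that $\hocolim\big(\coprod_i\mathcal{K}'\leftarrow\coprod_i\mathcal{K}\xrightarrow{G}\cC\big)$ is a model for $\cC[S^{-1}]$. For this one invokes the universal property of dg localisation: for any dg category $\cD$, applying $\mathrm{Map}(-,\cD)$ turns the homotopy pushout into the homotopy pullback of $\mathrm{Map}(\cC,\cD)\to\prod_i\mathrm{Map}(\mathcal{K},\cD)\leftarrow\prod_i\mathrm{Map}(\mathcal{K}',\cD)$; since $\mathrm{Map}(\mathcal{K},\cD)$ is the space of morphisms of $\cD$ and $\mathrm{Map}(\mathcal{K}',\cD)$ is (by the next paragraph) its subspace of quasi-isomorphisms, which is a union of connected components of $\mathrm{Map}(\mathcal{K},\cD)$, the homotopy fibre product is the union of components of $\mathrm{Map}(\cC,\cD)$ consisting of dg functors that send every $g_i$ to a quasi-isomorphism. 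This is precisely the universal property characterising $\cC[S^{-1}]$, so $\cC'$ is quasi-equivalent to $\cC[S^{-1}]$.

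The main obstacle is the input used in the previous paragraph: that $\mathcal{K}'$ is quasi-equivalent to the ``walking isomorphism'' $\cI$ --- the dg category with objects $0,1$, with $\hom_{\cI}(x,y)=k$ in degree $0$ for all $x,y$ and $\iota'\iota=1_0$, $\iota\iota'=1_1$ --- via the dg functor sending $\iota\mapsto\iota$, $\iota'\mapsto\iota^{-1}$, $\hat\iota,\check\iota,\bar\iota\mapsto0$ (one checks this respects the differentials), so that $\mathcal{K}\hookrightarrow\mathcal{K}'$ is a cofibrant model of the localisation $\mathcal{K}[\iota^{-1}]$ and hence $\mathrm{Map}(\mathcal{K}',\cD)\simeq\mathrm{Map}(\cI,\cD)=\{\text{quasi-isomorphisms in }\cD\}$. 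As the functor is bijective on objects, the point reduces to showing $H^\ast\hom_{\mathcal{K}'}(x,y)\cong k$, concentrated in degree $0$, for each of the four pairs $(x,y)$. I would prove this by filtering $\mathcal{K}=\mathcal{K}_0\subset\mathcal{K}_1\subset\mathcal{K}_2\subset\mathcal{K}_3\subset\mathcal{K}_4=\mathcal{K}'$, adjoining $\iota'$, then $\hat\iota$, then $\check\iota$, then $\bar\iota$, and chasing at each stage the long exact sequence on $\hom$-complexes produced by adjoining a single generator killing a prescribed cocycle; the crucial feature is that $\bar\iota$ kills precisely the obstruction class $[\iota\hat\iota-\check\iota\iota]$ left over after the third stage, after which all $\hom$-complexes become acyclic away from degree $0$ --- the point being that $\hat\iota,\check\iota$ already make $\iota$ a homotopy equivalence while $\bar\iota$ upgrades this to a \emph{coherent} homotopy inverse. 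This cohomology computation is routine but not short; it is the only genuinely technical step, and it is carried out in \cite{hocolim} (compare also \cite{toen}).
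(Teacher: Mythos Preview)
The paper does not supply a proof of this proposition; it is quoted from \cite{hocolim}, so there is no in-paper argument to compare against. Your strategy---realising the extension as the pushout $\cC\amalg_{\coprod_i\mathcal{K}}\coprod_i\mathcal{K}'$, observing this is a homotopy pushout, and then reducing to the single claim that $\mathcal{K}'\to\cI$ is a quasi-equivalence---is the standard one (essentially To\"en's, and presumably that of \cite{hocolim} as well), and is correct in outline.

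One caveat on the final technical step: your proposed filtration $\mathcal{K}_0\subset\cdots\subset\mathcal{K}_4$ by adjoining one generator at a time does not produce ``long exact sequences on $\hom$-complexes'' in the way you describe. Adjoining a single generator $f$ to a dg category does not change one $\hom$-space by a cone; it enlarges \emph{every} $\hom$-space by all words containing $f$, so the effect is controlled by a word-length spectral sequence rather than a single triangle. A cleaner route, available within this paper, is to observe that your $\mathcal{K}'$ is literally $\Cyl(k)$ for the one-object dg category $k$, so the acyclic fibration $p\colon\Cyl(k)\to k$ of Theorem~\ref{thm:cylinder-object} gives the quasi-equivalence $\mathcal{K}'\xrightarrow{\sim} k$ (and hence $\mathcal{K}'\xrightarrow{\sim}\cI$) for free. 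This avoids the hom-by-hom computation entirely.
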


Next, we explore the colimit of diagrams of dg categories:

\begin{prp}[\cite{hocolim}]\label{prp:colimit-dg}
	Let $G\colon \cA\to \cC$ be a dg functor, and $F\colon \cA \to \cB$ be a semifree extension by a set of objects $R$ and a set of morphisms $S$. Then, there exists a pushout (colimit) square
		\[\begin{tikzcd}
			\cB\ar[r,"\bar G"] & \cD \\
			\cA\ar[u,"F"]\arrow[r, "G"] & \cC \arrow[u, "\bar F"]
		\end{tikzcd}\]
		where $\bar F\colon\cC\to\cD$ is a semifree extension by the set of objects $R$ and a set of morphisms
		\[\bar S:=\{\bar f\colon \bar G(A)\to \bar G(B)\vb f\colon A\to B\text{ in }S\}\]
		with $|\bar f|:=|f|$ and $d\bar f:=\bar G(df)$, and
		\[\bar G(A):=\begin{cases}
			G(A) &\text{if } A\in\Ob\,\cA\\
			A &\text{if } A\in R
		\end{cases},\qquad\text{and}\qquad
		\bar G(f):=\begin{cases}
			G(f) &\text{if } f\in\mathrm{Mor}\,\cA\\
			\bar f &\text{if } f\in S
		\end{cases} .\]
\end{prp}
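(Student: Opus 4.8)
\textbf{Proof proposal for Proposition \ref{prp:colimit-dg}.}

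The plan is to verify directly that the square displayed in the statement satisfies the universal property of a pushout in $\dgCat$. First I would make sure $\cD$ is a well-defined dg category: since $F$ is a semifree extension, the morphisms of $\cB$ are freely generated over $F(\cC)$ — sorry, over $F(\cA)$ — by the set $S$ indexed by an ordinal, with $df_i$ lying in the subalgebra generated by $\mathrm{Mor}\,\cA$ together with $\{f_j \vb j<i\}$. Applying $\bar G$ termwise to these expressions for $df_i$ produces well-defined elements $d\bar f_i := \bar G(df_i)$ in the algebra generated by $\mathrm{Mor}\,\cC$ and $\{\bar f_j \vb j<i\}$, so $\bar F\colon\cC\to\cD$ is a genuine semifree extension by $R$ and $\bar S$; in particular $\bar F$ is an inclusion and $d^2=0$ on $\cD$ because $d^2\bar f_i = \bar G(d^2 f_i) = 0$, using that $\bar G$ restricted to the subalgebra generated so far is a dg functor (an inductive point). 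One also checks $\bar G$ is a dg functor: on objects and on morphisms of $\cA$ it agrees with $G$, and on the generators $f\in S$ it sends $f\mapsto\bar f$ with degrees and differentials matching by construction, so it extends multiplicatively and commutes with $d$. Commutativity of the square, $\bar G\circ F = \bar F\circ G$, is immediate on objects and on generators of $\cB$ (generators of $F(\cA)$ go to $G$-images, generators in $S$ go to their bars, matching $\bar F\circ G$ on the $\cC$-side).

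Next I would check the universal property. Suppose $H\colon\cB\to\cE$ and $K\colon\cC\to\cE$ are dg functors with $H\circ F = K\circ G$. I want a unique dg functor $L\colon\cD\to\cE$ with $L\circ\bar G = H$ and $L\circ\bar F = K$. On objects: $\Ob\,\cD = \bar G(\Ob\,\cA)\sqcup R = G(\Ob\,\cA)\sqcup R$ (identifying via $\bar F$), and we are forced to set $L(\bar G(A)) = K(A)$ for $A\in\Ob\,\cA$ — equivalently $H(F(A))$, consistent since $HF=KG$ — and $L(A) = H(A)$ for $A\in R$ (note $R\subseteq\Ob\,\cB$ and also $R\subseteq\Ob\,\cD$, with $\bar G$ identity on $R$). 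On morphisms: every morphism of $\cD$ is a $k$-linear combination of composites of morphisms of $\bar F(\cC)$ and the generators $\bar S$; we must set $L|_{\bar F(\cC)} = K$ (via $\bar F$) and $L(\bar f) = H(f)$ for $f\in S$. This determines $L$ on all of $\cD$ if it is well-defined, i.e. respects the only relations present — but $\cD$ is a \emph{free} extension of $\cC$ by $\bar S$, so there are no relations to check beyond those already holding in $\cC$, which $K$ respects; hence $L$ extends uniquely as an algebra map. Finally $L$ commutes with differentials: on $\bar F(\cC)$ because $K$ does, and on a generator $\bar f_i$ by induction on $i$, since $L(d\bar f_i) = L(\bar G(df_i))$ and $\bar G(df_i)$ is a word in $\mathrm{Mor}\,\cC$ and $\{\bar f_j\vb j<i\}$, on which $L$ already agrees with the appropriate composite of $K$ and $H$; matching this against $d(L\bar f_i) = d(H f_i) = H(df_i)$ reduces, via $HF=KG$, to the identity $\bar G(df_i)$ is the expression obtained from $df_i$ by the substitution rule defining $\bar G$, which is exactly how $\bar S$ was built.

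I do not expect a serious obstacle here; the content is bookkeeping. The one point requiring genuine care is the transfinite induction on the ordinal indexing $S$: at each stage one must know that the partially-defined $\bar G$ (resp. $L$) is a dg functor on the dg subcategory generated so far before extending over the next generator $f_i$, so that $\bar G(df_i)$ makes sense and has vanishing differential. This is a standard induction but should be stated explicitly, including the limit-ordinal case where one takes the evident colimit (union) of the partial functors. A secondary subtlety is the identification of objects: one should fix once and for all that $\Ob\,\cD$ is the disjoint union of $G(\Ob\,\cA)$ with $R$ glued along nothing new, so that "$\bar G(A)$'' for $A\in R$ literally equals $A$ and the two structure maps $\bar F$, $\bar G$ are compatible inclusions on objects; with that convention the object-level pushout is the obvious one and the morphism-level argument above goes through verbatim. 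Everything transfers to the setting of dg algebras (one object) and to semifree \emph{categories} by the same argument.
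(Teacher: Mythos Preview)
The paper does not prove this proposition; it is quoted from \cite{hocolim} without argument, so there is nothing to compare against. Your direct verification of the pushout universal property is the standard and correct route, and the transfinite-induction bookkeeping you outline is exactly what is needed.

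One small slip to fix: you write $\Ob\,\cD = \bar G(\Ob\,\cA)\sqcup R = G(\Ob\,\cA)\sqcup R$, but in fact $\Ob\,\cD = \Ob\,\cC \sqcup R$ (since $\bar F$ is a semifree extension of $\cC$ by $R$), and $G$ need not be surjective on objects. This does not affect your argument, because a few lines later you correctly set $L|_{\bar F(\cC)} = K$, which handles all objects of $\cC$; just correct the object count so the reader is not confused.
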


\begin{rmk}
	In a more casual sense, the colimit $\cD$ in Proposition \ref{prp:colimit-dg} can be thought as $\cB\amalg\cC$ after the identification of the images of $F$ with the images of $G$.
\end{rmk}

\begin{rmk}
	Given a semifree dg category $\cC$ with a set of generating morphisms $\{f_i\}$, consider a morphism $f\colon A\to B$ in $\{f_i\}$ where $A\neq B$. The dg localization $\cC[f^{-1}]$ can be given by $\cC$ with the identifications $A=B$ and $f=1_{A=B}$. This description relies on a description of dg localization through a colimit diagram, as presented in \cite{toen-morita}, and Proposition \ref{prp:colimit-dg}.
\end{rmk}

Finally, we present two propositions from \cite{chekanov} and \cite{subcritical}, which can be thought as ``basis change'' and ``cancellation of generators'' for the morphisms of semifree dg categories, respectively. They are useful when we simplify a given semifree dg category.

\begin{prp}\label{prp:variable-change}
	Let $\cC$ be a semifree dg category with a set of generating morphisms $\{f_i\}$ (indexed by an ordinal). Define the morphisms
	\[\tilde f_i:=u_i f_i + g_i\]
	where $u_i$ is a unit in the coefficient ring $k$, and $g_i$ is a morphism in $\cC$ generated by the set $\{f_j\vb j<i\}$. Then, the set $\{\tilde f_i\}$ also generates the morphisms in $\cC$ semifreely.
\end{prp}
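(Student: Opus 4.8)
The strategy is a transfinite induction on the ordinal indexing $\{f_i\}$, resting on the observation that the substitution $f_i\mapsto\tilde f_i=u_if_i+g_i$ is ``unitriangular'' — upper triangular with units on the diagonal — and therefore defines an automorphism of the underlying graded category of $\cC$.

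For each index $i$, let $\cC_{<i}$ be the graded subcategory of $\cC$ generated by $\{f_j\vb j<i\}$. Since $df_j$ is generated by $\{f_l\vb l<j\}\subseteq\{f_j\vb j<i\}$ for every $j<i$, this subcategory is preserved by $d$, hence is a dg subcategory, and it is freely generated as a graded algebra by $\{f_j\vb j<i\}$. By hypothesis $g_i\in\cC_{<i}$ (it is homogeneous of degree $|f_i|$ with the same source and target as $f_i$). For a successor ordinal $i=l+1$, the inclusion $\cC_{<l}\hookrightarrow\cC_{<l+1}$ exhibits $\cC_{<l+1}$ as a free extension of $\cC_{<l}$ by the single generator $f_l$; for a limit ordinal $i$, we have $\cC_{<i}=\colim_{j<i}\cC_{<j}$.

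First I would show, by transfinite induction on $i$, that there is a graded-category automorphism $\Phi_{<i}$ of $\cC_{<i}$, equal to the identity on objects, with $\Phi_{<i}(f_j)=\tilde f_j$ for all $j<i$, and that $\Phi_{<i'}$ restricts to $\Phi_{<i}$ whenever $i\le i'$. The base case $i=0$ is trivial. At a successor $i=l+1$, the induction hypothesis supplies an automorphism $\psi:=\Phi_{<l}$ of $\cC_{<l}$; the map acting as $\psi$ on $\cC_{<l}$ and sending $f_l\mapsto u_lf_l+g_l$ is then an automorphism of the one-generator free extension $\cC_{<l+1}$, with inverse acting as $\psi^{-1}$ on $\cC_{<l}$ and sending $f_l\mapsto u_l^{-1}\bigl(f_l-\psi^{-1}(g_l)\bigr)$, as one checks directly on generators. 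At a limit ordinal $i$, the compatible family $\{\Phi_{<j}\}_{j<i}$ glues to $\Phi_{<i}:=\colim_{j<i}\Phi_{<j}$, which is an automorphism because its inverse is the colimit of the inverses. Taking the colimit over all $i$ yields a graded automorphism $\Phi$ of $\cC$ with $\Phi(f_i)=\tilde f_i$. Since $\{f_i\}$ freely generates the morphisms of $\cC$ as a graded algebra, so does $\{\tilde f_i\}$; in particular, each $\cC_{<i}$ is freely generated as a graded algebra by $\{\tilde f_j\vb j<i\}$.

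Finally, the differential condition of Definition \ref{dfn:semifree} is immediate: $d\tilde f_i=u_i\,df_i+dg_i$, where $df_i\in\cC_{<i}$ by semifreeness of $\{f_i\}$ and $dg_i\in\cC_{<i}$ because $\cC_{<i}$ is a dg subcategory, so $d\tilde f_i\in\cC_{<i}$ is generated by $\{\tilde f_j\vb j<i\}$ by the previous paragraph. The only delicate point — the ``main obstacle'' — is the bookkeeping at limit ordinals: verifying that $\cC_{<i}$ is a well-defined dg subcategory, that the $\Phi_{<j}$ form a compatible system, and that a colimit of isomorphisms along these inclusions is again an isomorphism. Once the one-generator free-extension lemma used in the successor step is isolated, the rest is routine, with no hard estimate or construction required.
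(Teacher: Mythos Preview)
The paper does not supply a proof of this proposition; it is stated with attribution to \cite{chekanov} and \cite{subcritical} and used as a black box. Your argument is correct and is the standard one: the unitriangular substitution defines a graded automorphism of the underlying free category, built by transfinite induction along the filtration $\cC_{<i}$, and the differential condition is then a direct consequence of $df_i,\,dg_i\in\cC_{<i}$. The successor step (free one-generator extension, explicit inverse $f_l\mapsto u_l^{-1}(f_l-\psi^{-1}(g_l))$) and the limit step (colimit of a compatible system of isomorphisms) are handled correctly, and the conclusion that $\{\tilde f_i\}$ freely generates follows immediately from $\Phi$ being an automorphism.
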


\begin{prp}\label{prp:destabilisation}
	Let $\cC$ be a semifree dg category, and $\cD$ be the semifree extension of $\cC$ by the morphisms $\{a_i,b_i\}$ such that $da_i=b_i$ for all $i$. Then, $\cC$ and $\cD$ are quasi-equivalent. 
\end{prp}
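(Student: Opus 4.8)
The plan is to exhibit an explicit quasi-equivalence $\cC \hookrightarrow \cD$ (the semifree extension inclusion itself) and show it induces isomorphisms on all hom-complexes up to quasi-isomorphism, together with essential surjectivity on $H^0$. Essential surjectivity is immediate since the extension adds no new objects, so the inclusion is bijective on objects. The real content is the quasi-isomorphism statement: for objects $A, B$ of $\cC$, the inclusion $\hom^*_{\cC}(A,B) \hookrightarrow \hom^*_{\cD}(A,B)$ is a quasi-isomorphism of complexes. First I would set up a filtration (or an explicit deformation retraction) on the morphism complexes of $\cD$. Since $\cD$ is a semifree extension of $\cC$ by pairs $\{a_i, b_i\}$ with $da_i = b_i$, every morphism of $\cD$ is a $k$-linear combination of composable words alternating between morphisms of $\cC$ and the new generators $a_i, b_i$. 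The key algebraic observation is that the pair $(a_i, b_i)$ contributes an acyclic ``Koszul-type'' summand: on the level of the free algebra, adjoining $a_i$ with $da_i = b_i$ is, up to the module structure over $\cC$ and over the other generators, tensoring with the acyclic complex $(k \xrightarrow{\mathrm{id}} k)$ concentrated so that $b_i = da_i$.

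The cleanest way to make this precise, which I would carry out, is to handle one pair $(a, b)$ at a time (transfinite induction / colimit argument over the ordinal indexing $\{a_i, b_i\}$, using that filtered colimits of quasi-isomorphisms are quasi-isomorphisms and that each stage $\cD_{<i} \hookrightarrow \cD_{\le i}$ adjoins a single pair over the semifree category $\cD_{<i}$). For a single pair $a\colon X \to Y$, $b = da\colon X \to Y$ with $|b| = |a|+1$, I would construct a contracting homotopy on the quotient/associated-graded complex $\hom^*_{\cD}(A,B)/\hom^*_{\cC}(A,B)$. Concretely, every monomial in $\cD$ not lying in $\cC$ contains at least one letter $a$ or $b$; pick (say) the \emph{leftmost} such occurrence and define $h$ on a monomial $w = w_1 \cdot (a \text{ or } b) \cdot w_2$ by: if the leftmost special letter is $b$, replace it by $a$ (shifting degree by $-1$) and output $w_1 \cdot a \cdot w_2$ with an appropriate sign; if it is $a$, set $h(w) = 0$. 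A sign-bookkeeping computation then shows $dh + hd = \mathrm{id}$ on the complement of $\hom^*_{\cC}$, i.e., the inclusion is a quasi-isomorphism; this is essentially the statement that the two-step complex generated by $a$ (with $da = b$) is contractible as a bimodule over the rest. Alternatively, one invokes Proposition \ref{prp:variable-change} type reasoning, but the homotopy is more transparent.

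The main obstacle I anticipate is the bookkeeping in the transfinite/one-generator-at-a-time reduction: one must be careful that when passing to $\cD_{\le i}$, the differential $d a_i = b_i$ is \emph{exactly} $b_i$ (no lower-order corrections), so the single-pair contraction applies verbatim; the hypothesis $da_i = b_i$ (rather than $da_i = b_i + (\text{stuff in } \cD_{<i})$) is exactly what guarantees this, and I would flag that this is where the precise form of the hypothesis is used. A secondary subtlety is ensuring the contracting homotopy $h$ is well-defined independently of how a morphism is written as a sum of monomials — this is fine because $\cD$ is \emph{freely} generated over $\cC_{<i}$ by $\{a_i, b_i\}$, so monomials form a basis and $h$ is defined on basis elements and extended linearly. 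Once these points are settled, compatibility with compositions (so that we indeed have a dg functor inducing the claimed equivalence) is automatic since the inclusion $\cC \hookrightarrow \cD$ is a dg functor by construction, and being a quasi-equivalence is precisely the bijectivity-on-objects plus quasi-isomorphism-on-homs that we have established.
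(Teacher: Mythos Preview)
Your proposal is correct. The paper itself does not supply a proof of this proposition; it is stated with attribution to \cite{chekanov} and \cite{subcritical}, so there is no in-paper argument to compare against. Your outline---reducing by transfinite induction to a single pair $(a,b)$ with $da=b$, and then exhibiting an explicit contracting homotopy $h$ on the complement of $\hom^*_{\cC}$ inside $\hom^*_{\cD}$ by replacing the leftmost $b$ by $a$---is the standard and cleanest route. One small sharpening worth making explicit when you write it up: the span of monomials containing at least one $a$ or $b$ is genuinely a \emph{subcomplex} of $\hom^*_{\cD}(A,B)$ (since $da=b$ and $db=0$ keep you inside that span), so $\hom^*_{\cD}(A,B)=\hom^*_{\cC}(A,B)\oplus V$ as complexes and your $h$ contracts $V$; phrasing it this way avoids the slightly informal ``$dh+hd=\mathrm{id}$ on the complement''. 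The two subtleties you flag (well-definedness of $h$ via the free monomial basis, and the exact form $da_i=b_i$ with no lower terms) are precisely the right ones, and both are handled by the semifreeness hypothesis.
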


In the setting of Proposition \ref{prp:destabilisation}, we say $\cD$ is obtained from $\cC$ by {\em stabilization}, and $\cC$ is obtained from $\cD$ by {\em destabilization}.

\section{Homotopy colimit functor using model structures}\label{subsec:hocolim-via-model}

This section is a review of the homotopy colimit functor and related model-theoretical results. Our main reference is \cite{dwyer-spalinski}.

For a given model category $\cM$ (more generally, a category $\cM$ with weak equivalences), we write $\Ho(\cM)$ for its {\em homotopy category}, which is the category
\begin{itemize}
	\item whose objects are the same as the objects of $\cM$, and
	\item whose morphisms are generated by the morphisms of $\cM$ and the formal inverses of the weak equivalences.
\end{itemize}
It comes with the {\em localization functor}
\[l\colon\cM\to\Ho(\cM)\]
which is the identity on objects, and sending morphisms to themselves.

From now on, let $J$ be the category given by
\[a\leftarrow c\rightarrow b\]
where $a,b,c$ are the objects and the arrows are the morphisms.

\begin{dfn}
	Let $\cM$ be a category with weak equivalences, and $\cM^J$ be the category of functors $J\to\cM$ ($J$-diagrams in $\cM$) whose weak equivalences are the objectwise weak equivalences. The {\em homotopy colimit functor}
	\[\hocolim\colon\Ho(\cM^J)\to\Ho(\cM)\]
	is defined (up to natural equivalence) as the total left derived functor of the colimit functor
	\[\colim\colon\cM^J\to\cM .\]
\end{dfn}

If $\cM$ has a model structure, we have a more concrete way to express the homotopy colimit functor. To describe it, we consider an induced model structure on $\cM^J$ from the model structure on $\cM$:

\begin{prp}[\cite{hirschhorn},\cite{dugger}]\label{prp:reedy}
	Let $\cM$ be a model category.
	\begin{enumerate}
		\item $\cM^J$ has a model structure, called a {\em Reedy model structure}, whose cofibrant objects are the diagrams of the form
		\[\cA\xleftarrow{\alpha} \cC\xrightarrow{\beta} \cB\]
		where $\cA,\cB,\cC$ are cofibrant objects in $\cM$, and $\beta$ is a cofibration.
		
		\item If $\cM^J$ is equipped with the Reedy model structure above, then $\colim\colon\cM^J\to\cM$ preserves cofibrations and acyclic cofibrations.
	\end{enumerate}
\end{prp}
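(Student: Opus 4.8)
The plan is to recognize $J = (a \leftarrow c \rightarrow b)$ as a Reedy category, import the general Reedy model structure on diagram categories, and then read off both statements from explicit computations of the (small) latching and matching objects. For part~(1) the key point is to choose the degree function \emph{asymmetrically}: set $\deg(a) = 0$, $\deg(c) = 1$, $\deg(b) = 2$, so that the direct subcategory $\vec{J}$ of degree-raising maps is generated by $c \to b$ and the inverse subcategory $\overleftarrow{J}$ of degree-lowering maps is generated by $c \to a$. One checks at once that every morphism of $J$ factors uniquely as a map of $\overleftarrow{J}$ followed by a map of $\vec{J}$, so this is a legitimate Reedy structure, and \cite{hirschhorn} (see also \cite{dugger}) then furnishes the Reedy model structure on $\cM^J$: weak equivalences are the objectwise ones, cofibrations are the maps whose relative latching maps are cofibrations in $\cM$, and fibrations are the maps whose relative matching maps are fibrations in $\cM$.

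For part~(1) I would then identify the cofibrant objects. An object $X = (\cA \xleftarrow{\alpha} \cC \xrightarrow{\beta} \cB)$ is Reedy cofibrant iff the latching map $L_j X \to X_j$ is a cofibration in $\cM$ for every $j$. Reading off $\vec{J}$, the latching category is empty at $a$ and at $c$ and is the one-object category $\{c \to b\}$ at $b$, so $L_a X = L_c X = \emptyset_\cM$ and $L_b X = \cC$; hence Reedy cofibrancy amounts exactly to: $\cA$ and $\cC$ cofibrant in $\cM$ and $\beta \colon \cC \to \cB$ a cofibration (which then forces $\cB$ to be cofibrant, being $\emptyset \to \cC \to \cB$), with no condition imposed on $\alpha$. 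This is precisely the asserted description.

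For part~(2) the cleanest route is to observe that $\colim \colon \cM^J \to \cM$ is left adjoint to the constant-diagram functor $\Delta \colon \cM \to \cM^J$, so it suffices to prove that $\Delta$ is right Quillen, i.e.\ preserves fibrations and acyclic fibrations. Since $\Delta$ sends weak equivalences of $\cM$ to objectwise weak equivalences, it is enough to show $\Delta$ preserves fibrations. Reading off $\overleftarrow{J}$, the matching category is empty at $a$ and at $b$ and is the one-object category $\{c \to a\}$ at $c$, so $M_a X = M_b X = \ast$ and $M_c X = \cA$; hence for a fibration $p \colon Z \to W$ in $\cM$ the relative matching maps of $\Delta p$ are $p$ itself (at $a$ and at $b$) and the canonical isomorphism $Z \xrightarrow{\ \sim\ } W \times_W Z$ (at $c$), all of which are fibrations in $\cM$. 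Thus $\Delta p$ is a Reedy fibration, and it is objectwise acyclic whenever $p$ is, so $\Delta$ is right Quillen; therefore $\colim$ is left Quillen and preserves cofibrations and acyclic cofibrations.

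The main obstacle is not conceptual — all the homotopical substance is inherited from the Reedy machinery — but lies in the bookkeeping: getting the degree function right so that the cofibrant objects come out in the stated \emph{asymmetric} form (only $\beta$, not $\alpha$, required to be a cofibration), and correctly computing the small latching and matching categories for this particular $J$. One could instead prove part~(2) by hand, writing $\colim X = \cA \sqcup_\cC \cB$ and building $\colim Y$ out of $\colim X$ as an iterated pushout along the relative latching maps of a given Reedy cofibration $X \to Y$, but the argument via ``$\Delta$ is right Quillen'' avoids that computation entirely.
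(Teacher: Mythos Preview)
Your proposal is correct. The paper does not supply its own proof of this proposition: it is stated with attribution to \cite{hirschhorn} and \cite{dugger} and used as a black box, so there is nothing to compare against. What you have written is exactly the standard argument one finds by unwinding those references for this particular $J$, including the asymmetric degree function that singles out $\beta$ (and not $\alpha$) as the map required to be a cofibration, and the right-Quillen verification for $\Delta$ via the matching maps.
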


Before describing the homotopy colimit functor using model structures, we need to define cofibrant resolution functors:

\begin{prp}[\cite{dwyer-spalinski}]\label{prp:cofibrant-resolution}	
	Let $\cM$ be a model category. For any $X\in\cM$, there exists a cofibrant object $Q(X)\in\cM$ and an acyclic fibration $p_X\colon Q(X)\to X$ by model category axioms. Then for any morphism $f\colon X\to Y$, there exists a unique morphism $\tilde f\colon Q(X)\to Q(Y)$ up to right homotopy such that the diagram
	\begin{equation*}\label{eq:cofibrant-resolution}\begin{tikzcd}
			Q(X)\ar[r,"\tilde f"]\ar[d,"p_X"] & Q(Y)\ar[d,"p_{Y}"]\\
			X\ar[r,"f"] & Y
	\end{tikzcd}\end{equation*}
	commutes.
\end{prp}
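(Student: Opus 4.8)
The plan is to deduce everything from the standard model-category axioms, following \cite{dwyer-spalinski} verbatim in spirit. First I would produce the objects $Q(X)$: applying the factorization axiom to the map $\emptyset \to X$ from the initial object, I factor it as $\emptyset \to Q(X) \xrightarrow{p_X} X$ with $\emptyset \to Q(X)$ a cofibration (so $Q(X)$ is cofibrant) and $p_X$ an acyclic fibration. This defines $Q$ on objects; note there is a choice here, but the statement only asserts existence.

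Next I would construct $\tilde f$ for a morphism $f\colon X\to Y$. Consider the composite $f\circ p_X\colon Q(X)\to Y$. Since $Q(X)$ is cofibrant, the map $\emptyset \to Q(X)$ is a cofibration; since $p_Y\colon Q(Y)\to Y$ is an acyclic fibration, the lifting axiom applied to the square with left edge $\emptyset\to Q(X)$, right edge $p_Y$, top edge $\emptyset \to Q(Y)$, and bottom edge $f\circ p_X$ yields a lift $\tilde f\colon Q(X)\to Q(Y)$ with $p_Y\circ \tilde f = f\circ p_X$, which is exactly the commuting square in the statement.

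For uniqueness up to right homotopy I would argue as follows. Suppose $\tilde f, \tilde f'\colon Q(X)\to Q(Y)$ both make the square commute, i.e.\ $p_Y\tilde f = p_Y\tilde f' = f p_X$. Since $Q(X)$ is cofibrant and $Q(Y)$ is fibrant (every object is fibrant in, e.g., the Dwyer--Kan structure on $\dgCat$; in general one first applies this argument after noting $Q(Y)$ is a valid target, or restricts to the relevant case), left homotopy, right homotopy, and the appropriate two-sided notions agree, so it suffices to exhibit a right homotopy. Choose a path object $Q(Y)\xrightarrow{\sim} \Path(Q(Y)) \xrightarrow{(q_0,q_1)} Q(Y)\times Q(Y)$. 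The two maps $\tilde f,\tilde f'$ agree after composing with $p_Y$, and $p_Y$ is a weak equivalence, so $\tilde f$ and $\tilde f'$ become equal in $\Ho(\cM)$ hence are right homotopic; concretely, one builds the homotopy by another lift of $\tilde f$ against the acyclic fibration part of $(q_0,q_1)$ using the cofibrancy of $Q(X)$. Standard model-category bookkeeping then shows any two such lifts are right homotopic.

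The main obstacle — really the only subtlety — is the uniqueness-up-to-right-homotopy clause: one must be careful that right homotopy of maps out of $Q(X)$ into $Q(Y)$ is an equivalence relation and interacts correctly with composition, which requires $Q(X)$ cofibrant and $Q(Y)$ fibrant. In the model structures on $\dgCat$ relevant here every object is fibrant, so this causes no trouble; in the general statement one invokes the standard fact (\cite{dwyer-spalinski}) that for $Q(X)$ cofibrant the relation ``right homotopic'' on $\Hom(Q(X), Q(Y))$ is an equivalence relation once $Q(Y)$ is fibrant, and that maps agreeing in $\Ho(\cM)$ between a cofibrant source and fibrant target are homotopic. Everything else is a direct application of the factorization and lifting axioms.
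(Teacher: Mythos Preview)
The paper does not supply its own proof of this proposition; it simply cites \cite{dwyer-spalinski}. So the comparison is with the standard argument there. Your existence steps are exactly right: factor $\emptyset\to X$ to produce $Q(X)$ and $p_X$, then lift $f\circ p_X$ against the acyclic fibration $p_Y$ using cofibrancy of $Q(X)$ to obtain $\tilde f$.

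The uniqueness argument, however, has a gap. You pass through $\Ho(\cM)$ (``equal in $\Ho(\cM)$ hence right homotopic'') and lean on fibrancy of $Q(Y)$, which is neither assumed in the statement nor needed. The clean argument from \cite{dwyer-spalinski} is more direct: given two lifts $\tilde f,\tilde f'$ with $p_Y\tilde f=p_Y\tilde f'=f p_X$, choose a good cylinder object $Q(X)\amalg Q(X)\hookrightarrow \Cyl(Q(X))\xrightarrow{\sim} Q(X)$ (the first map is a cofibration since $Q(X)$ is cofibrant) and lift in the square
\[
\begin{tikzcd}
Q(X)\amalg Q(X)\ar[r,"{(\tilde f,\tilde f')}"]\ar[d,hookrightarrow] & Q(Y)\ar[d,"p_Y"]\\
\Cyl(Q(X))\ar[r,"f p_X\circ\,\text{pr}"']\ar[ur,dashed,"H"] & Y
\end{tikzcd}
\]
to get a \emph{left} homotopy $H$. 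Then invoke the standard lemma that for a cofibrant source, left homotopy implies right homotopy; this requires only cofibrancy of $Q(X)$, not fibrancy of $Q(Y)$. Your detour through $\Ho(\cM)$ is circular in general (the identification of morphisms in $\Ho(\cM)$ with homotopy classes is exactly what such lemmas establish) and only works as stated when $Q(Y)$ is fibrant, which you correctly flag as a restriction but which the proposition does not impose.
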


\begin{dfn}\label{dfn:cof-res-functor}
	Let $\cM$ be a model category.
	\begin{enumerate}
		\item We define $\cM_c$ as the full subcategory of $\cM$ consisting of cofibrant objects, and $\pi\cM_c$ as the category with the same objects as $\cM_c$ whose morphisms are right homotopy classes of morphisms in $\cM_c$.
		
		\item For any $X\in\cM$, there exists a cofibrant object $Q(X)\in\cM$ and an acyclic fibration $p_X\colon Q(X)\to X$ by model category axioms. Then we define a {\em cofibrant resolution functor} as
		\begin{align*}
			Q\colon \cM &\to\pi\cM_c\\
			X &\mapsto Q(X) && \text{on objects}\\
			f &\mapsto [\tilde f] &&\text{on morphisms}
		\end{align*}
		where given $f\colon X\to Y$, the morphism $\tilde f$ is determined by Proposition \ref{prp:cofibrant-resolution}, and $[\tilde f]$ is the right homotopy class of $\tilde f$. The well-definedness of this functor follows from Proposition \ref{prp:cofibrant-resolution}.
	\end{enumerate}
\end{dfn}
 
Now, we present the alternative description of the homotopy colimit functor:

\begin{thm}[\cite{dwyer-spalinski}]\label{thm:dwyer-hocolim}
	Let $\cM$ be a model category, and equip $\cM^J$ with the Reedy model structure induced by $\cM$ as in Proposition \ref{prp:reedy}. The unique lift of the composition
	\[\cM^J\xrightarrow{Q}\pi(\cM^J)_c\xrightarrow{l\,\circ\,\colim}\Ho(\cM)\]
	gives the homotopy colimit functor
	\[\hocolim\colon\Ho(\cM^J)\rightarrow\Ho(\cM)\]
	where $Q$ is a cofibrant resolution functor.
\end{thm}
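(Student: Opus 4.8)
The plan is to derive this as an instance of the standard ``total left derived functor via a deformation retract'' argument (see \cite{dwyer-spalinski}), specialized to the Reedy structure on $\cM^J$. Recall the general principle: if $F\colon\cM^J\to\cM$ is a functor sending acyclic cofibrations between cofibrant objects to weak equivalences, then its total left derived functor exists and is computed on any object by first taking a cofibrant replacement and then applying $F$. Here $F=\colim$, and Proposition \ref{prp:reedy}\,(2) supplies precisely the hypothesis we need: $\colim$ preserves cofibrations and acyclic cofibrations, so in particular it sends acyclic cofibrations between Reedy-cofibrant diagrams to weak equivalences in $\cM$.

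The steps I would carry out are as follows. First, I would record that $Q\colon\cM^J\to\pi(\cM^J)_c$ is a cofibrant resolution functor in the sense of Definition \ref{dfn:cof-res-functor}, using the Reedy model structure on $\cM^J$ from Proposition \ref{prp:reedy}\,(1); this is just an application of the abstract machinery to the model category $\cM^J$. Second, I would observe that $\colim$ descends to homotopy categories of cofibrant objects: since $\colim$ carries weak equivalences between cofibrant diagrams to weak equivalences (an immediate consequence of Proposition \ref{prp:reedy}\,(2) together with a Ken Brown-type lemma — factor a weak equivalence between cofibrant objects as an acyclic cofibration followed by a left inverse to an acyclic cofibration), it also respects right homotopy, so the composite $l\circ\colim$ factors through $\pi(\cM^J)_c$. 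This yields a well-defined functor $\pi(\cM^J)_c\to\Ho(\cM)$, and precomposing with $Q$ gives a functor $\cM^J\to\Ho(\cM)$. Third, I would check that this functor sends weak equivalences in $\cM^J$ to isomorphisms in $\Ho(\cM)$: if $f$ is an objectwise (hence Reedy) weak equivalence, then $\tilde f\colon Q(X)\to Q(Y)$ is a weak equivalence between cofibrant objects by the two-out-of-three property applied to the square in Proposition \ref{prp:cofibrant-resolution}, so $\colim(\tilde f)$ is a weak equivalence in $\cM$, i.e.\ an isomorphism after applying $l$. By the universal property of the localization $\cM^J\to\Ho(\cM^J)$, there is then a unique functor $\Ho(\cM^J)\to\Ho(\cM)$ through which $l\circ\colim\circ\,Q$ factors; call it $\hocolim$. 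Finally, I would verify that this $\hocolim$ is indeed the total left derived functor of $\colim$, i.e.\ that it satisfies the required universal property among functors $\Ho(\cM^J)\to\Ho(\cM)$ equipped with a natural transformation to $l\circ\colim$; this again is the standard argument, using that the natural transformation $p\colon Q\Rightarrow 1$ induces the comparison and that cofibrant replacement is initial up to homotopy.

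The main obstacle — really the only non-formal point — is the passage from ``$\colim$ preserves acyclic cofibrations between cofibrant objects'' to ``$\colim$ preserves all weak equivalences between cofibrant objects,'' and the compatibility with right homotopy that makes $l\circ\colim$ well-defined on $\pi(\cM^J)_c$. Both are handled by the classical Ken Brown lemma: given a weak equivalence $g\colon \cX\to\cY$ between Reedy-cofibrant diagrams, form the factorization of $(g,1_\cY)\colon \cX\amalg\cY\to\cY$ through a cofibrant object, extract two acyclic cofibrations, apply $\colim$, and conclude by two-out-of-three. For the right-homotopy compatibility one applies the same idea to a cylinder (or path) object. Everything else is a matter of unwinding Definition \ref{dfn:cof-res-functor} and Theorem \ref{thm:dwyer-hocolim}'s hypotheses and invoking the universal property of localization; there are no serious computations.
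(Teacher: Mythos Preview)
Your proposal is correct and follows the standard Dwyer--Spalin\'ski argument that the paper is citing; indeed the paper does not give its own proof of this theorem but only sketches in Remark \ref{rmk:homotopy-lift} exactly the two points you develop (well-definedness of $l\circ\colim$ on $\pi(\cM^J)_c$ via Proposition \ref{prp:reedy}\,(2), and existence/uniqueness of the lift because $l\circ\colim\circ Q$ inverts weak equivalences). Your use of Ken Brown's lemma to pass from acyclic cofibrations to arbitrary weak equivalences between cofibrant objects is the correct way to fill in that step.
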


\begin{rmk}\label{rmk:homotopy-lift}\mbox{}
	\begin{enumerate}
		\item Although the colimit functor is not well-defined on $\pi(\cM^J)_c$, the functor $l\circ\colim$ above is well-defined since it identifies right homotopic maps between cofibrant objects. This follows from the second item in Proposition \ref{prp:reedy}. See \cite{dwyer-spalinski} for the details.
		\item By the lift, we mean that the triangle
		\[\begin{tikzcd}[column sep=2cm]
			\cM^J\ar[r,"l\,\circ\,\colim\,\circ\, Q"]\ar[d,"l"] & \Ho(\cM)\\
			\Ho(\cM^J)\ar[ru,"\hocolim"']
		\end{tikzcd}\]
		commutes. The existence and uniqueness follow from the fact that $l\circ\colim\circ Q$ sends weak equivalences to isomorphisms, which again follows from the second item in Proposition \ref{prp:reedy}.
	\end{enumerate}
\end{rmk}

Before presenting a corollary of Theorem \ref{thm:dwyer-hocolim}, we describe a functor transforming a given diagram to a more manageable one:

\begin{prp}\label{prp:diagram-diagonal}\mbox{}
	\begin{enumerate}
		\item There exists a functor $T\colon\cM^J\to\cM^J$ such that
		\begin{itemize}
			\item $T$ sends an object of the form
			\[X:=(\cA\xleftarrow{\alpha}\cC\xrightarrow{\beta}\cB)\]
			to the object
			\[T(X):=(\cA\amalg \cB\xleftarrow{\alpha\amalg\beta}\cC\amalg \cC\xrightarrow{\nabla_{\cC}}\cC)\]
			where $\nabla_{\cC}$ is the codiagonal for $\cC$, and
		
			\item $T$ sends a morphism of the form
			\[\begin{tikzcd}
				X\ar[d,"F",blue]\\
				X'
			\end{tikzcd}
			\quad:=\quad
			\begin{tikzcd}
				\cA\ar[d,"F_{\cA}",blue] & \cC\ar[l,"\alpha"']\ar[r,"\beta"]\ar[d,"F_{\cC}",blue] & \cB\ar[d,"F_{\cB}",blue]\\
				\cA' & \cC'\ar[l,"\alpha'"']\ar[r,"\beta'"] & \cB'
			\end{tikzcd}\]
			to the morphism
			\[\begin{tikzcd}
				T(X)\ar[d,"T(F)",blue]\\
				T(X')
			\end{tikzcd}
			\quad:=\quad
			\begin{tikzcd}
				\cA\amalg \cB\ar[d,"F_{\cA}\amalg F_{\cB}",blue] & \cC\amalg \cC\ar[l,"\alpha\amalg\beta"']\ar[r,"\nabla_{\cC}"]\ar[d,"F_{\cC}\amalg F_{\cC}",blue] & \cC\ar[d,"F_{\cC}",blue]\\
				\cA'\amalg \cB' & \cC'\amalg \cC'\ar[l,"\alpha'\amalg\beta'"']\ar[r,"\nabla_{\cC'}"] & \cC'
			\end{tikzcd} .\]
		\end{itemize}
		\item The colimit functor satisfies
		\[\colim\circ T=\colim .\]
		\item $T$ sends weak equivalences to weak equivalences, hence it induces the functor
		\[\Ho(T)\colon\Ho(\cM^J)\to\Ho(\cM^J)\]
		satisfying
		\[\hocolim\circ\Ho(T)=\hocolim .\]
	\end{enumerate}
\end{prp}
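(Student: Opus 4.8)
\emph{Proof proposal.} The plan is to dispatch the three parts in order, the first two being essentially formal and the third carrying the only genuine content.

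For part (1) the construction of $T$ on objects is already explicit, so I would only check that $T(F)$ is a legitimate morphism of $J$-diagrams and that $T$ is functorial. Both reduce to standard facts: $(-)\amalg(-)\colon\cM\times\cM\to\cM$ is a functor, and the codiagonal $\nabla$ is natural in its argument. Granting these, the left square in the diagram defining $T(F)$ commutes because it is $(-)\amalg(-)$ applied to the two given commuting squares $F_\cA\alpha=\alpha'F_\cC$ and $F_\cB\beta=\beta'F_\cC$, and the right square commutes by naturality of $\nabla$; functoriality $T(G\circ F)=T(G)\circ T(F)$ and $T(1_X)=1_{T(X)}$ are inherited from $(-)\amalg(-)$. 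For part (2) I would run a cocone computation: a cocone under $T(X)$ with apex $Z$ is a pair $f=(f_\cA,f_\cB)\colon\cA\amalg\cB\to Z$, $h\colon\cC\to Z$ with $f\circ(\alpha\amalg\beta)=h\circ\nabla_\cC$, and restricting the compatibility condition to the two summands of $\cC\amalg\cC$ turns it into $f_\cA\alpha=h=f_\cB\beta$; discarding the redundant datum $h$, a cocone under $T(X)$ is exactly a cocone under $X=(\cA\xleftarrow{\alpha}\cC\xrightarrow{\beta}\cB)$. This bijection is natural in $Z$ and in $X$, hence matches the universal cocones, so with the evident identifications $\colim\circ T=\colim$.

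For part (3) the first point is that $T$ preserves pointwise weak equivalences. The components of $T(F)$ are $F_\cA\amalg F_\cB$, $F_\cC\amalg F_\cC$ and $F_\cC$; the last is a weak equivalence by hypothesis, and the coproduct ones are because the coproduct in $\dgCat$ is disjoint union, which manifestly preserves quasi-equivalences (essential surjectivity on cohomology and quasi-isomorphism on $\hom$-complexes are tested one component at a time) and hence, after applying $\Tw$ resp.\ $\Perf$, pretriangulated and Morita equivalences as well. (In an arbitrary model category the analogous statement holds at least between cofibrant objects, since $(-)\amalg(-)$ is left Quillen and one may invoke Ken Brown's lemma; this is all the argument below needs.) Thus $T$ descends to $\Ho(T)$ with $\Ho(T)\circ l=l\circ T$.

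It remains to prove $\hocolim\circ\Ho(T)=\hocolim$. Since $\hocolim$ inverts weak equivalences and $T$ preserves them, I would reduce to a Reedy cofibrant $X=(\cA\xleftarrow{\alpha}\cC\xrightarrow{\beta}\cB)$, so $\hocolim(X)=\colim(X)$. The obstacle is that $T(X)=(\cA\amalg\cB\xleftarrow{\alpha\amalg\beta}\cC\amalg\cC\xrightarrow{\nabla_\cC}\cC)$ is \emph{not} Reedy cofibrant, because the codiagonal $\nabla_\cC$ is not a cofibration; so one cannot naively push a cofibrant resolution through $T$. Instead I would factor $\nabla_\cC$ as a cofibration $\iota\colon\cC\amalg\cC\to\Cyl(\cC)$ followed by a weak equivalence $\Cyl(\cC)\xrightarrow{\sim}\cC$ — i.e.\ choose a cylinder object for $\cC$, and once the functorial $\Cyl$ of Theorem \ref{thm-intro:cyl} is available, that one — so that $(\cA\amalg\cB\xleftarrow{\alpha\amalg\beta}\cC\amalg\cC\xrightarrow{\iota}\Cyl(\cC))$ is Reedy cofibrant and admits a pointwise weak equivalence to $T(X)$, whence $\hocolim(T(X))=(\cA\amalg\cB)\amalg_{\cC\amalg\cC}\Cyl(\cC)$. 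A second cocone computation, now carrying $\Cyl(\cC)$ in place of $\cC$, identifies this pushout with the colimit of the zig-zag $\cA\xleftarrow{\alpha}\cC\xrightarrow{i_1}\Cyl(\cC)\xleftarrow{i_2}\cC\xrightarrow{\beta}\cB$, that is, with the double mapping cylinder of $\alpha$ and $\beta$, which is the standard model for $\hocolim(X)$ when $\cA,\cB,\cC$ are cofibrant. Hence $\hocolim(T(X))\cong\hocolim(X)$, naturally in $X$, and this is the assertion (a strict equality, if desired, follows by re-deriving $\hocolim$ from the description in Theorem \ref{thm:dwyer-hocolim} and checking both functors lift $l\circ\colim\circ Q$). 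The hard part is precisely this last step — recognizing $\colim$ of the cylinder-replacement of $T(X)$ as the double mapping cylinder of $X$, which is also exactly where the cylinder functor of the paper re-enters; everything else is bookkeeping with universal properties and the behaviour of disjoint unions under quasi-equivalences.
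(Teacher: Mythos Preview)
Your proof is correct. The paper's own proof is literally the single sentence ``It is straightforward to check,'' so you have supplied all of the content. Your arguments for (1) and (2)---via functoriality of the coproduct, naturality of the codiagonal, and the cocone bijection---are the expected verifications.

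For (3), your observation that $T(X)$ fails to be Reedy cofibrant even when $X$ is, and your resolution via a cylinder factorization of $\nabla_{\cC}$ followed by identifying the resulting pushout with the double mapping cylinder, is a correct and standard way to close the gap. It is also exactly the maneuver the paper performs later, in Lemma~\ref{lem:cofibrant-resolution-dg}, when it builds its cofibrant resolution functor $Q$ on the image of $T$ by replacing $\nabla_{\cC}$ with $i_{\cC}\colon\cC\amalg\cC\to\Cyl(\cC)$. So while your route is far more explicit than the paper's non-proof, it is in the same spirit and in fact anticipates the paper's subsequent construction. One minor remark: your parenthetical about recovering a \emph{strict} equality by checking that both functors lift $l\circ\colim\circ Q$ is the cleanest way to finish, and matches how the paper uses the statement in the proof of Corollary~\ref{cor:dwyer-hocolim}.
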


\begin{proof}
	It is straightforward to check.
\end{proof}

Using Proposition \ref{prp:diagram-diagonal}, we have a slight improvement of Theorem \ref{thm:dwyer-hocolim} for constructing the homotopy colimit functor, which only requires us to construct a cofibrant resolution functor $Q$ for the image of the functor $T$. This will be useful in Section \ref{sec:hocolim-semifree}.

\begin{cor}\label{cor:dwyer-hocolim}
	Let $\cM$ be a model category, and equip $\cM^J$ with the Reedy model structure induced by $\cM$ as in Proposition \ref{prp:reedy}. The unique lift of the composition
	\[\cM^J\xrightarrow{T}\cM^J\xrightarrow{Q}\pi(\cM^J)_c\xrightarrow{l\,\circ\,\colim}\Ho(\cM)\]
	gives the homotopy colimit functor
	\[\hocolim\colon\Ho(\cM^J)\rightarrow\Ho(\cM)\]
	where $T$ is the functor described in Proposition \ref{prp:diagram-diagonal}, and $Q$ is a cofibrant resolution functor.
\end{cor}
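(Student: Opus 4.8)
The plan is to deduce the corollary directly from Theorem~\ref{thm:dwyer-hocolim} and Proposition~\ref{prp:diagram-diagonal}, with essentially no new computation. First I would record that everything in the statement is well-defined: by Proposition~\ref{prp:reedy} the category $\cM^J$ carries the Reedy model structure, so a cofibrant resolution functor $Q\colon\cM^J\to\pi(\cM^J)_c$ exists by Definition~\ref{dfn:cof-res-functor}; by Remark~\ref{rmk:homotopy-lift}(1) the composite $l\circ\colim$ is a well-defined functor $\pi(\cM^J)_c\to\Ho(\cM)$ (even though $\colim$ alone is not defined on homotopy classes); and $T\colon\cM^J\to\cM^J$ is the functor of Proposition~\ref{prp:diagram-diagonal}. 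Hence $l\circ\colim\circ Q\circ T$ is an honest functor $\cM^J\to\Ho(\cM)$, and it makes sense to ask for its unique lift along $l\colon\cM^J\to\Ho(\cM^J)$.

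The key step is to prove the identity of functors $\cM^J\to\Ho(\cM)$
\[l\circ\colim\circ Q\circ T=\hocolim\circ l.\]
For this I would first invoke Theorem~\ref{thm:dwyer-hocolim}, which in the form of the commuting triangle of Remark~\ref{rmk:homotopy-lift}(2) says exactly $l\circ\colim\circ Q=\hocolim\circ l$; precomposing with $T$ gives $l\circ\colim\circ Q\circ T=\hocolim\circ l\circ T$. Then I would use that $\Ho(T)$ is, by construction in Proposition~\ref{prp:diagram-diagonal}(3), the functor \emph{induced} by $T$, meaning $l\circ T=\Ho(T)\circ l$, together with the identity $\hocolim\circ\Ho(T)=\hocolim$ from the same part of the proposition, to rewrite the right-hand side as $\hocolim\circ\Ho(T)\circ l=\hocolim\circ l$. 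This establishes the displayed identity.

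To finish, I would invoke the universal property of the localization $l\colon\cM^J\to\Ho(\cM^J)$. The composite $l\circ\colim\circ Q\circ T$ sends weak equivalences to isomorphisms (being equal to $\hocolim\circ l$, and $l$ inverts weak equivalences while $\hocolim$ is a functor), so it factors uniquely as $H\circ l$ for a functor $H\colon\Ho(\cM^J)\to\Ho(\cM)$; the displayed identity then shows $H=\hocolim$, which is precisely the assertion that the unique lift of $\cM^J\xrightarrow{T}\cM^J\xrightarrow{Q}\pi(\cM^J)_c\xrightarrow{l\circ\colim}\Ho(\cM)$ is $\hocolim$. As for difficulty: there is no genuine obstacle here, since the statement is a bookkeeping consequence of results already in hand; the only points needing care are the consistent use of the ``induced functor'' and ``unique lift'' conventions (i.e.\ $\Ho(T)\circ l=l\circ T$ and uniqueness of factorizations through $l$) and the observation that, while $\colim$ is not well-defined on homotopy classes, $l\circ\colim$ is, so all composites above are legitimate functors.
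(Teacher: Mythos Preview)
Your proposal is correct and follows essentially the same approach as the paper: the paper's proof consists of a single commutative diagram whose left square encodes $l\circ T=\Ho(T)\circ l$, whose right square encodes $l\circ\colim\circ Q=\hocolim\circ l$ (Theorem~\ref{thm:dwyer-hocolim}), and whose bottom edge encodes $\hocolim\circ\Ho(T)=\hocolim$ (Proposition~\ref{prp:diagram-diagonal}), which is exactly the chain of identities you assemble. Your write-up is more verbose but logically identical.
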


\begin{proof}
	The statement follows from the commutativity of the diagram
	\[\begin{tikzcd}
		\cM^J\ar[r,"T"]\ar[d,"l"] & \cM^J\ar[d,"l"]\ar[r,"Q"] & \pi(\cM^J)_c\ar[d,"l\,\circ\,\colim"]\\
		\Ho(\cM^J)\ar[r,"\Ho(T)"]\ar[rr,"\hocolim", bend right=25] & \Ho(\cM^J)\ar[r,"\hocolim"] & \Ho(\cM)
	\end{tikzcd}\]
	by Theorem \ref{thm:dwyer-hocolim} and Proposition \ref{prp:diagram-diagonal}.
\end{proof}

\section{Cylinder functor for the category of dg categories}\label{sec:cylinder-functor}

Our goal is to give a formula for the homotopy colimit functor $\hocolim\colon \Ho(\cM^J)\to\Ho(\cM)$ for the case $\cM=\dgCat$, where $\dgCat$ is the model category of small $k$-linear dg categories for a commutative ring $k$ with Dwyer-Kan, quasi-equiconic, or Morita model structure. Their weak equivalences are quasi-equivalences, pretriangulated equivalences, and Morita equivalences, respectively (see Theorem \ref{thm:dgcat-model}). Our formula will work for all these model structures. To express the formula, first we need to describe a cofibrant resolution functor, which will be done in Section \ref{sec:hocolim-semifree}.

Here, we will define a cylinder functor for the semifree dg categories in $\dgCat$, in other words, we will give a functorial construction of cylinder objects for semifree dg categories. We will use this construction to describe a cofibrant resolution functor in Section \ref{sec:hocolim-semifree}. Also, in Section \ref{sec:cofibration-category}, the cylinder functor will play a pivotal role in defining an $I$-category and a cofibration category of semifree dg categories.

We first recall the definition of a cylinder functor:

\begin{dfn}\label{dfn:cylinder-functor}
	Let $\cM$ be a model category. A functor $I\colon\cM\to\cM$ is called a {\em cylinder functor}, if there exists a cofibration
	\[i_{\cC}\colon\cC\amalg\cC\to I(\cC)\]
	and an acyclic fibration
	\[p_{\cC}\colon I(\cC)\to\cC\]
	such that $p_{\cC}\circ i_{\cC}=\nabla_{\cC}$ (the codiagonal of $\cC$) for all $\cC\in\cM$ (in other words, $I(\cC)$ is a {\em cylinder object} for $\cC$), and the diagram
	\begin{equation}\label{eq:cylinder-functor}\begin{tikzcd}
		\cC\amalg\cC\ar[r,"F\amalg F"]\ar[d,"i_{\cC}"] & \cD\amalg\cD\ar[d,"i_{\cD}"]\\
		I(\cC)\ar[r,"I(F)"]\ar[d,"p_{\cC}"] & I(\cD)\ar[d,"p_{\cD}"]\\
		\cC\ar[r,"F"] & \cD
	\end{tikzcd}\end{equation}
	commutes for any morphism $F\colon \cC\to\cD$ in $\cM$.
\end{dfn}

From now on, we focus on $\cM=\dgCat$ with Dwyer-Kan model structure. Everything here still holds if we work with quasi-equiconic or Morita model structure.  Before defining a cylinder functor, recall that \cite{hocolim} defined a cylinder {\em object} $\Cyl(\cC)$ for any semifree dg category $\cC$:

\begin{dfn}[\cite{hocolim}]\label{dfn:cylinder-pre-dg}
	Let $\cC$ be a semifree dg category, and $i_1,i_2\colon\cC\to\cC\amalg\cC$ be the inclusions to the first and second copies, respectively. We define $\Cyl(\cC)$ as the semifree extension of $\cC\amalg\cC$ by the morphisms comprised of
	\begin{itemize}
		\item the morphisms $t_C,t_C',\hat t_C,\check t_C,\bar t_C$
		\[\begin{tikzcd}
			i_1(C)\ar[loop left,"\hat t_C"]\ar[r,"t_C"]\ar[r,"\bar t_C",bend left=60] & i_2(C)\ar[l,"t_C'",bend left=30]\ar[loop right,"\check t_C"]
		\end{tikzcd}\]
		for each $C\in\cC$, with the gradings
		\[|t_C|=|t_C'|=0,\qquad |\hat t_C|=|\check t_C|=-1,\qquad |\bar t_C|=-2,\]
		and with the differentials
		\[d t_C=d t_C'=0,\qquad d\hat t_C=1_{i_1(C)}-t_C'\circ t_C,\qquad d\check t_C=1_{i_2(C)}-t_C\circ t_C',\qquad d\bar t_C=t_C\circ\hat t_C - \check t_C\circ t_C ,\]
		\item degree $|f|-1$ morphism $t_f\colon i_1(A)\to i_2(B)$ for each generating morphism $f\in\hom^*_{\cC}(A,B)$, with the differential
		\[dt_f=(-1)^{|f|}(i_2(f)\circ t_A - t_B\circ i_1(f))+\text{correction term}\]
		where the correction term is $0$ if $df=0$.
	\end{itemize}
	 If $df\neq 0$, the correction term is given as follows: If 
	\[df = c 1_A + \sum_{i=1}^m c_i f_{i,n_i}\circ \ldots \circ f_{i,j}\circ\ldots\circ f_{i,1}\]
	where $f_{i,j}$ are generating morphisms of $\cC$, and $c, c_i\in k$ ($c=0$ if $A\neq B$), then
	\[\text{correction term}=\sum_{i=1}^m c_i \sum_{j=1}^{n_i}(-1)^{|f_{i,j-1}|+\ldots+|f_{i,1}|} i_2(f_{i,n_i}) \ldots i_2(f_{i,j+1}) \circ t_{f_{i,j}}\circ  i_1(f_{i,j-1})\ldots i_1(f_{i,1}).\]
\end{dfn}

\begin{rmk}\label{rmk:cyl-object-well-defined}
	The semifree dg category $\Cyl(\cC)$, associated with a given semifree dg category $\cC$, is well-defined up to isomorphism. In other words, it does not depend on the choice of generating morphisms of $\cC$. This can be verified directly or by considering Remark \ref{rmk:cyl-functor-well-defined}.
\end{rmk}

\begin{rmk}\label{rmk:cylinder-dg-loc}
	By the description of the dg localization in Proposition \ref{prp:localise-semifree}, $\Cyl(\cC)$ is the dg localization $\Cyl_0(\cC)[S^{-1}]$ (up to quasi-equivalence), where $\Cyl_0(\cC)$ is the semifree extension of $\cC\amalg\cC$ by the morphisms comprised of
	\begin{itemize}
		\item closed degree zero morphism $t_C\colon i_1(C)\to i_2(C)$ for each $C\in\cC$,
		\item degree $|f|-1$ morphism $t_f\colon i_1(A)\to i_2(B)$ for each generating morphism $f\in\hom^*_{\cC}(A,B)$, with the differential given in Definition \ref{dfn:cylinder-pre-dg},
	\end{itemize}
	and $S=\{t_C\vb C\in\cC\}$.
\end{rmk}

\begin{thm}[\cite{hocolim}]\label{thm:cylinder-object}
	Let $\cC$ be a semifree dg category. $\Cyl(\cC)$ defined in Definition \ref{dfn:cylinder-pre-dg} is a cylinder object for $\cC$. That is, the semifree extension
	\[i_{\cC}:=i_1\amalg i_2\colon\cC\amalg\cC\to\Cyl(\cC)\]
	is a cofibration, and the functor
	\begin{align*}
		p_{\cC}\colon\Cyl(\cC)&\to\cC\\
		i_1(C),i_2(C)&\mapsto C,\quad t_C,t_C'\mapsto 1_C,\quad \hat t_C,\check t_C,\bar t_C\mapsto 0 &&\text{for each object $C\in\cC$}\\
		i_1(f),i_2(f)&\mapsto f,\quad t_f\mapsto 0 &&\text{for each generating morphism $f$ in $\cC$}
	\end{align*}
	is an acyclic fibration, and they satisfy $p_{\cC}\circ i_{\cC}=\nabla_{\cC}$.
\end{thm}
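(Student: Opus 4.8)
The plan is to verify the three asserted properties of $\Cyl(\cC)$ in turn, relying on the general theory of cofibration/model categories and the explicit description in Definition \ref{dfn:cylinder-pre-dg}.

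\textbf{Step 1: $i_{\cC}$ is a cofibration.} Since $\Cyl(\cC)$ is by construction a semifree extension of $\cC\amalg\cC$ (the generating morphisms $t_C, t_C', \hat t_C, \check t_C, \bar t_C, t_f$ are added with differentials lying in the subalgebra generated by previously-added morphisms and the morphisms of $\cC\amalg\cC$), the inclusion $i_{\cC}\colon \cC\amalg\cC\to\Cyl(\cC)$ is a semifree extension, hence a cofibration in all three model structures of Theorem \ref{thm:dgcat-model}. One should briefly check that the differentials respect a valid filtration by an ordinal: add all the $t_C$ first (closed, so $df = 0$ for these), then $\hat t_C,\check t_C$ (differential uses only $t_C,t_C'$, identities), then $\bar t_C$, and finally the $t_f$ ordered compatibly with the ordering of the generating morphisms $f$ of $\cC$ (the correction term in $dt_f$ only involves $i_1(f_{i,j}),i_2(f_{i,j})$ and $t_{f_{i,j}}$ for the $f_{i,j}$ appearing in $df$, which precede $f$).

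\textbf{Step 2: $p_{\cC}$ is a dg functor with $p_{\cC}\circ i_{\cC}=\nabla_{\cC}$.} The identity $p_{\cC}\circ i_{\cC}=\nabla_{\cC}$ is immediate from the formula, since $i_1(C),i_2(C)\mapsto C$ and $i_1(f),i_2(f)\mapsto f$. The main point here is that $p_{\cC}$ is well-defined as a dg functor, i.e. it commutes with differentials on generators. For $t_C,t_C'$ this is $d(1_C)=0=p_{\cC}(dt_C)$. For $\hat t_C$: $p_{\cC}(d\hat t_C)=p_{\cC}(1_{i_1(C)}-t_C'\circ t_C)=1_C-1_C=0=d(0)$, and similarly for $\check t_C,\bar t_C$. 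For $t_f$: one computes $p_{\cC}(dt_f)$; the term $(-1)^{|f|}(i_2(f)\circ t_A-t_B\circ i_1(f))$ maps to $(-1)^{|f|}(f\circ 1_A-1_B\circ f)=0$, and every summand of the correction term contains a factor $t_{f_{i,j}}$ which maps to $0$; hence $p_{\cC}(dt_f)=0=d(p_{\cC}(t_f))$. So $p_{\cC}$ is a dg functor.

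\textbf{Step 3: $p_{\cC}$ is an acyclic fibration.} For the Dwyer-Kan model structure, every object is fibrant and, more relevantly, one can invoke the lifting characterisation: acyclic fibrations are exactly the maps with the right lifting property against all cofibrations (equivalently, against the generating cofibrations of Tabuada's structure). Alternatively — and this is the cleaner route, already essentially contained in \cite{hocolim} — one shows directly that $p_{\cC}$ is a \emph{weak equivalence} and a \emph{fibration}. Being a weak equivalence (quasi-equivalence): $p_{\cC}$ is surjective on objects, and on each hom-complex it is a quasi-isomorphism with a section provided by (the relevant copy of) $i_1$ or $i_2$; the homotopy $\hat t, \check t, \bar t$ exhibit $i_1(C), i_2(C)$ as isomorphic in $H^0$, and a spectral-sequence / filtration argument on the added generators (or the observation via Remark \ref{rmk:cylinder-dg-loc} that $\Cyl(\cC)$ is a dg localization of the stabilization $\Cyl_0(\cC)$ of $\cC\amalg\cC$ inverting the $t_C$, combined with Proposition \ref{prp:destabilisation}) shows $\hom_{\Cyl(\cC)}(i_1(A),i_2(B))\simeq\hom_{\cC}(A,B)$ and that $p_{\cC}$ realises this equivalence. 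Being a fibration in the Dwyer-Kan structure amounts to: surjectivity on hom-complexes (clear, since $i_1$ is a section) and the isomorphism-lifting property in $H^0$, which holds because the $t_C$ are honest isomorphisms in $H^0(\Cyl(\cC))$ mapping to the given identities. Since acyclic fibration $=$ weak equivalence $+$ fibration, this finishes the proof; and because the three model structures of Theorem \ref{thm:dgcat-model} share the same (acyclic) cofibrations and a quasi-equivalence is a pretriangulated/Morita equivalence, the statement holds verbatim for all three.

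\textbf{Main obstacle.} The one genuinely substantive point is Step 3 — verifying that $p_{\cC}$ is a quasi-equivalence, i.e. that adjoining the $t_C$-data and the $t_f$ does not change the quasi-isomorphism type of the hom-complexes and that $p_{\cC}$ computes the resulting equivalence. The slickest argument is the one indicated via Remark \ref{rmk:cylinder-dg-loc}: identify $\Cyl(\cC)$ up to quasi-equivalence with $\Cyl_0(\cC)[\{t_C\}^{-1}]$, recognise $\Cyl_0(\cC)$ as obtained from $\cC$ (embedded diagonally, after collapsing $i_1(C)=i_2(C)=C$) by stabilizations in the sense of Proposition \ref{prp:destabilisation} — pairing each $t_f$ with a suitable generator — so that $\Cyl_0(\cC)[\{t_C\}^{-1}]\xrightarrow{\sim}\cC$, and check this composite agrees with $p_{\cC}$. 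Everything else (Steps 1 and 2, and the fibration part of Step 3) is a direct and routine check from the defining formulas. Since the statement is attributed to \cite{hocolim}, it is legitimate here to recall the construction and cite that reference for the verification, emphasising only the points needed below.
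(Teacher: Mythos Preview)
The paper itself gives no proof of this theorem: it is stated as a result of \cite{hocolim} and simply cited, so there is nothing to compare your argument against on the level of detail. Your final sentence already acknowledges this, and that is in fact all the paper does.

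That said, your sketch is largely sound, with one wobble. Steps 1 and 2 are exactly the routine checks one would expect, and they are correct. In Step 3, your outline via Remark \ref{rmk:cylinder-dg-loc} is the right idea, but the parenthetical ``embedded diagonally, after collapsing $i_1(C)=i_2(C)=C$'' is not quite how the argument goes: $\Cyl_0(\cC)$ genuinely has two copies of each object, and one does not collapse them before localizing. The correct line is that the localization at the $t_C$ makes $i_1(C)$ and $i_2(C)$ isomorphic in $H^0$, and then one checks (e.g.\ by a filtration/destabilization argument pairing each $i_2(f)$ with $t_f$ after composing with the now-invertible $t_A$) that the full subcategory on the $i_1(C)$'s is quasi-equivalent to $\cC$ and that $p_{\cC}$ realizes this. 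Your description of the fibration property is fine. Since the paper is content to cite \cite{hocolim}, your level of detail already exceeds what is required here.
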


So, Theorem \ref{thm:cylinder-object} builds the cylinder functor on objects of $\dgCat$, but not on morphisms. The functoriality of the cylinder object construction is not discussed in \cite{hocolim}. Here, we will make the construction functorial. This is the main goal of this section. First, we need some definitions and properties.

Given a semifree dg category $\cC$, Definition \ref{dfn:cylinder-pre-dg} defines $\Cyl(\cC)$ and for each generating morphism $f$ of $\cC$, it specifies a generating morphism $t_f$ of $\Cyl(\cC)$. Here, we extend this and define a morphism $t_{\theta}$ in $\Cyl(\cC)$ for each morphism $\theta$ in $\cC$, which will be useful later:

\begin{dfn}\label{dfn:generalised-t-morphisms}
	Let $\cC$ be a semifree dg category. Let $\theta\in\hom^*_{\cC}(A,B)$ be given by
	\[\theta=c 1_A + \sum_{i=1}^m c_i f_{i,n_i}\circ\ldots\circ f_{i,j}\circ\ldots\circ f_{i,1}\]
	for some generating morphisms $f_{i,j}$ of $\cC$ and $c, c_i\in k$ ($c=0$ if $A\neq B$). We define degree $|\theta|-1$ morphism $t_{\theta}\in\hom^*_{\Cyl(\cC)}(i_1(A),i_2(B))$ by
	\[t_{\theta}:=\sum_{i=1}^m c_i \sum_{j=1}^{n_i}(-1)^{|f_{i,j-1}|+\ldots+|f_{i,1}|} i_2(f_{i,n_i})\circ\ldots\circ i_2(f_{i,j+1})\circ t_{f_{i,j}}\circ i_1(f_{i,j-1})\circ\ldots\circ i_1(f_{i,1}) .\]
\end{dfn}

\begin{prp}\label{prp:properties-t-theta}
	Let $\cC$ be a semifree dg category, and let $\theta\in\hom^*_{\cC}(A,B)$. We have the identity
	\begin{equation}\label{eq:theta-differential}
		dt_{\theta}=(-1)^{|\theta|}(i_2(\theta)\circ t_A - t_B\circ i_1(\theta)) + t_{d\theta} .
	\end{equation}
	Moreover, if
	\begin{equation}\label{eq:theta-via-theta}
		\theta=c 1_A + \sum_{i=1}^m c_i \theta_{i,n_i}\circ\ldots\circ \theta_{i,j}\circ\ldots\circ \theta_{i,1}
	\end{equation}
	for some morphisms $\theta_{i,j}$ in $\cC$ (not necessarily generating morphisms) and $c, c_i\in k$ ($c=0$ if $A\neq B$), we have the identity
	\begin{equation}\label{eq:theta-equation}
		t_{\theta}=\sum_{i=1}^m c_i \sum_{j=1}^{n_i}(-1)^{|\theta_{i,j-1}|+\ldots+|\theta_{i,1}|} i_2(\theta_{i,n_i})\circ\ldots\circ i_2(\theta_{i,j+1})\circ t_{\theta_{i,j}}\circ i_1(\theta_{i,j-1})\circ\ldots\circ i_1(\theta_{i,1}) .
	\end{equation}
\end{prp}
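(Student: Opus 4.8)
The plan is to prove all three identities by transfinite induction on the ordinal indexing the generating morphisms of $\cC$, but organized cleverly: first establish \eqref{eq:theta-differential} and \eqref{eq:theta-equation} for morphisms $\theta$ that are $k$-linear combinations of composites of \emph{generating} morphisms (which is how $t_\theta$ is literally defined in Definition \ref{dfn:generalised-t-morphisms}), and only afterwards upgrade to the general statement \eqref{eq:theta-equation} where the $\theta_{i,j}$ need not be generating. The key observation making everything tick is that all three formulas are (graded) $k$-\emph{linear} in $\theta$, so it suffices to treat the case where $\theta = f_{n}\circ\cdots\circ f_1$ is a single composite of generating morphisms (the $c\,1_A$ term contributes nothing to $t_\theta$ since $dt_\theta$ for $\theta=1_A$ is zero on both sides, $d1_A=0$ and $t_{1_A}=0$).

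First I would prove \eqref{eq:theta-differential} for $\theta = f_n\circ\cdots\circ f_1$ a composite of generators. Apply $d$ to the defining sum for $t_\theta$, use the graded Leibniz rule, and substitute $dt_{f_{i,j}} = (-1)^{|f_{i,j}|}(i_2(f_{i,j})\circ t_{A_{i,j}} - t_{B_{i,j}}\circ i_1(f_{i,j})) + (\text{correction term for }f_{i,j})$ from Definition \ref{dfn:cylinder-pre-dg}. The heart of the computation is a telescoping/cancellation argument: the terms $i_2(\cdots)\circ t_{f_j}\circ i_1(\cdots)$ that come from differentiating the $i_1$- and $i_2$-prefixes of one summand cancel against the $i_2(f_j)\circ t_{\bullet}$ and $t_{\bullet}\circ i_1(f_j)$ pieces produced by $dt_{f_{j\pm 1}}$ in adjacent summands, leaving only the two boundary terms $(-1)^{|\theta|}(i_2(\theta)\circ t_A - t_B\circ i_1(\theta))$; and the correction terms assemble, via the very formula in Definition \ref{dfn:cylinder-pre-dg}, into $t_{d\theta}$ (using that $d\theta = \sum_j \pm\, f_n\cdots (df_j)\cdots f_1$ expands $df_j$ into composites of earlier generators, so $t_{d\theta}$ is computed by the linear extension already established). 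Sign bookkeeping is the one genuinely fiddly part, and the Koszul signs $(-1)^{|f_{i,j-1}|+\cdots+|f_{i,1}|}$ in the definition of $t_\theta$ are precisely engineered to make the telescoping signs match.

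Next, with \eqref{eq:theta-differential} in hand for generator-composites, I would prove \eqref{eq:theta-equation} in the general form. By $k$-linearity it reduces to $\theta = \theta_n\circ\cdots\circ\theta_1$ with each $\theta_i$ an arbitrary morphism of $\cC$; expanding each $\theta_i$ as a $k$-combination of generator-composites and expanding out, one reduces further — again by linearity — to the \emph{associativity/concatenation} statement: if $\theta = \theta_2\circ\theta_1$ with $\theta_1,\theta_2$ themselves composites of generators, then $t_{\theta_2\circ\theta_1} = i_2(\theta_2)\circ t_{\theta_1} + (-1)^{|\theta_1|}\, t_{\theta_2}\circ i_1(\theta_1)$ (this is exactly \eqref{eq:theta-equation} with $m=1$, $n_1=2$). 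But writing $\theta_1 = f_p\cdots f_1$ and $\theta_2 = g_q\cdots g_1$ and unwinding Definition \ref{dfn:generalised-t-morphisms} for the concatenated word $g_q\cdots g_1 f_p\cdots f_1$, the sum splits into the terms whose distinguished letter lies in the $f$-block (these reassemble, using $i_2(g_q\cdots g_1) = i_2(\theta_2)$, into $i_2(\theta_2)\circ t_{\theta_1}$) and those whose distinguished letter lies in the $g$-block (these reassemble into $(-1)^{|\theta_1|}\,t_{\theta_2}\circ i_1(\theta_1)$, the global sign $(-1)^{|\theta_1|}$ appearing because each such term carries an extra prefix $i_1(f_p\cdots f_1)$ of total degree $|\theta_1|$ relative to the Koszul sign internal to $t_{\theta_2}$). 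Once the two-fold concatenation identity holds, the general $n$-fold case \eqref{eq:theta-equation} follows by an easy induction on $n$, and \eqref{eq:theta-differential} for arbitrary $\theta$ follows by applying the already-proved generator-case together with the Leibniz rule and \eqref{eq:theta-equation}.

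The main obstacle I anticipate is purely the sign discipline in the telescoping step for \eqref{eq:theta-differential}: one must track the Koszul sign $(-1)^{|f_{i,j-1}|+\cdots+|f_{i,1}|}$ attached to $t_{f_{i,j}}$, the sign $(-1)^{(\deg\text{ of stuff to the left})}$ produced when $d$ hops over $i_2$-prefixes, the sign $(-1)^{|f_{i,j}|}$ inside $dt_{f_{i,j}}$, and the shift by $1$ coming from $|t_{f}| = |f|-1$ — and verify that all of these conspire so that the interior terms cancel in pairs and the correction terms land on the nose as $t_{d\theta}$. A clean way to manage this is to phrase the whole computation in terms of the \emph{bar-type} expression: think of $t_\theta$ as obtained from the word $f_n|\cdots|f_1$ by summing over ways to mark one letter, replace it by its $t$-version, push later letters through $i_2$ and earlier letters through $i_1$; then $d$ acting on this structure is a standard bar-differential computation, and the cancellations are the usual ones. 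No step requires anything beyond Definition \ref{dfn:cylinder-pre-dg}, Definition \ref{dfn:generalised-t-morphisms}, and the graded Leibniz rule, so modulo the signs the argument is routine.
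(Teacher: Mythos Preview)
Your proposal is correct and follows essentially the same route as the paper: reduce by $k$-linearity to the case $\theta=f_n\circ\cdots\circ f_1$ a composite of generators, verify \eqref{eq:theta-differential} there by the telescoping Leibniz computation, and derive \eqref{eq:theta-equation} from the two-fold concatenation identity plus linearity. The one superfluous element is the framing via transfinite induction on the ordinal of generators---your argument never actually invokes it, and it is not needed, since once the $t_{f_i}$ are given the definition of $t_\theta$ is already complete and the identities are direct computations.
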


\begin{proof}
	First, we remark that \eqref{eq:theta-differential} holds by definition when $\theta$ is a generating morphism. Now, assume $\theta$ is as in Definition \ref{dfn:generalised-t-morphisms}. Then the assignment $\theta\mapsto t_{\theta}$ is linear. Hence, it is enough to prove \eqref{eq:theta-differential} for $\theta=1_A$ and $\theta=f_n\circ\ldots\circ f_1$ for any generating morphisms $f_i$ of $\cC$. This is straightforward to check using the remark in the beginning of the proof.
	
	Now, assume $\theta$ is given as in \eqref{eq:theta-via-theta}. By the linearity of the assignment $\theta\mapsto t_{\theta}$, we only need to prove \eqref{eq:theta-equation} for $\theta=\theta_n\circ\ldots\circ\theta_1$ for any given morphisms $\theta_i$. Furthermore, again by the linearity, we can assume that each $\theta_i$ is given as a product of generating morphisms $f_j$ of $\cC$. Then, it is straightforward to check that \eqref{eq:theta-equation} holds.
\end{proof}

We are now ready to state one of the main results of this section:

\begin{thm}\label{thm:cylinder-morphism}
	Let $F\colon\cC\to\cD$ be a dg functor between semifree dg categories.
	\begin{enumerate}
		\item There is a dg functor $\Cyl(F)\colon\Cyl(\cC)\to\Cyl(\cD)$ that is an extension of the dg functor
		\[F\amalg F\colon \cC\amalg\cC \to \cD\amalg\cD\]
		i.e.
		\begin{align*}
			\Cyl(F)\colon\Cyl(\cC)&\to\Cyl(\cD)\\
			i_1(C),i_2(C)&\mapsto i_1(F(C)),i_2(F(C)) &&\text{respectively, for each $C\in\cC$}\\
			i_1(\theta),i_2(\theta)&\mapsto i_1(F(\theta)),i_2(F(\theta)) &&\text{respectively, for each morphism $\theta$ in $\cC$}
		\end{align*}
		by additionally specifying
		\begin{align*}
			t_C, t_C',\hat t_C,\check t_C,\bar t_C&\mapsto t_{F(C)}, t_{F(C)}',\hat t_{F(C)},\check t_{F(C)},\bar t_{F(C)}&&\text{respectively, for each object $C\in\cC$}\\
			t_f&\mapsto t_{F(f)} &&\text{for each generating morphism $f$ in $\cC$}
		\end{align*}
		where $t_{F(f)}$ is defined according to Definition \ref{dfn:generalised-t-morphisms}.
		
		\item The dg functor $\Cyl(F)$ satisfies
		\begin{equation}\label{eq:cyl-theta}\Cyl(F)(t_{\theta})=t_{F(\theta)}\end{equation}
		for any morphism $\theta$ in $\cC$.
		
		\item The diagram
		\begin{equation}\label{eq:cylinder-morphism-diagram}\begin{tikzcd}
				\cC\amalg\cC\ar[r,"F\amalg F"]\ar[d,"i_{\cC}"] & \cD\amalg\cD\ar[d,"i_{\cD}"]\\
				\Cyl(\cC) \ar[r,"\Cyl(F)"]\ar[d,"p_{\cC}"] & \Cyl(\cD)\ar[d,"p_{\cD}"]\\
				\cC\ar[r,"F"] & \cD
		\end{tikzcd}\end{equation}
		commutes.
	\end{enumerate} 
\end{thm}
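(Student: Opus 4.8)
The statement has three parts, and the natural order is to verify them in the sequence given: (1) that the assignment described actually defines a dg functor $\Cyl(F)\colon\Cyl(\cC)\to\Cyl(\cD)$; (2) the compatibility formula $\Cyl(F)(t_\theta)=t_{F(\theta)}$; and (3) commutativity of the square \eqref{eq:cylinder-morphism-diagram}. The heart of the matter is (1), since (2) and (3) then follow quickly.

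For part (1), since $\Cyl(\cC)$ is a semifree extension of $\cC\amalg\cC$ by the generating morphisms $t_C, t_C', \hat t_C, \check t_C, \bar t_C$ (for $C\in\cC$) and $t_f$ (for $f$ a generating morphism of $\cC$), a dg functor out of it is determined by specifying images of these generators, subject only to the requirement that the differential is respected on each generator. On the part coming from $\cC\amalg\cC$ we use $F\amalg F$, which is already a dg functor. So the work is to check $d$-compatibility on the newly added generators. For the "loop" generators $t_C,\dots,\bar t_C$ this is a direct comparison of the stated differentials in Definition \ref{dfn:cylinder-pre-dg} with the corresponding relations in $\Cyl(\cD)$ — e.g. $\Cyl(F)(d\hat t_C)=1_{i_1(F(C))}-t'_{F(C)}t_{F(C)}=d\hat t_{F(C)}$, using only that $F$ preserves identities and composition — and similarly for $\check t_C,\bar t_C$ (and trivially for $t_C,t_C'$, which are closed). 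The real computation is the generator $t_f$: one must show $\Cyl(F)(dt_f)=d\,t_{F(f)}$ in $\Cyl(\cD)$, where the right-hand side is the differential of the (generally non-generating) morphism $t_{F(f)}$ defined via Definition \ref{dfn:generalised-t-morphisms}. By Proposition \ref{prp:properties-t-theta}, equation \eqref{eq:theta-differential} applied to $\theta=F(f)$ gives
\[
d\,t_{F(f)}=(-1)^{|F(f)|}\bigl(i_2(F(f))\circ t_{F(A)}-t_{F(B)}\circ i_1(F(f))\bigr)+t_{d F(f)},
\]
and since $F$ is a dg functor, $|F(f)|=|f|$ and $dF(f)=F(df)$. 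On the other side, $\Cyl(F)$ applied to the formula for $dt_f$ in Definition \ref{dfn:cylinder-pre-dg} sends $i_2(f)\mapsto i_2(F(f))$, $t_A\mapsto t_{F(A)}$, etc., and sends the correction term — which is literally $t_{df}$ in the notation of Definition \ref{dfn:generalised-t-morphisms} — to $\Cyl(F)(t_{df})$. So matching the two sides reduces exactly to the identity $\Cyl(F)(t_{df})=t_{F(df)}$, i.e. to part (2) applied to the morphism $df$ (which is generated by generators $f_j<f$, so an inductive/ordinal argument is available if one wants to be careful).

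This means (1) and (2) should be proved together, by induction along the ordinal indexing the generators of $\cC$. For (2) itself: $t_{\theta}$ is by Definition \ref{dfn:generalised-t-morphisms} a $k$-linear combination of composites of the form $i_2(f_{i,n_i})\circ\cdots\circ t_{f_{i,j}}\circ\cdots\circ i_1(f_{i,1})$; applying the dg functor $\Cyl(F)$, which sends $i_1(g)\mapsto i_1(F(g))$, $i_2(g)\mapsto i_2(F(g))$, and $t_{f}\mapsto t_{F(f)}$, turns this into the same expression with every $f_{i,j}$ replaced by $F(f_{i,j})$. By Proposition \ref{prp:properties-t-theta}, specifically the identity \eqref{eq:theta-equation} with $\theta_{i,j}=F(f_{i,j})$ and $\theta=F(\theta)$ (using that $F$ is multiplicative and the sign weights $|F(f_{i,j})|=|f_{i,j}|$ are unchanged), this is precisely $t_{F(\theta)}$. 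So (2) is essentially bookkeeping once Proposition \ref{prp:properties-t-theta} is in hand. Finally, for (3): the top square of \eqref{eq:cylinder-morphism-diagram} commutes because $\Cyl(F)$ was defined to extend $F\amalg F$, so it agrees with it after precomposition with $i_\cC$; the bottom square commutes because one checks $p_\cD\circ\Cyl(F)$ and $F\circ p_\cC$ agree on every generator of $\Cyl(\cC)$ — both kill $\hat t,\check t,\bar t,t_f$, both send $t_C,t_C'$ to $1_{F(C)}$, and both send $i_1(C),i_2(C),i_1(\theta),i_2(\theta)$ to $F(C)$ resp. $F(\theta)$ — and a dg functor is determined by its action on generators.

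**Main obstacle.** The only genuinely delicate point is the $d$-compatibility check for $t_f$ in part (1), and the fact that it is entangled with part (2): one cannot invoke \eqref{eq:cyl-theta} for $t_{df}$ before $\Cyl(F)$ is known to be a functor, yet showing $\Cyl(F)$ is a functor requires exactly that identity for $df$. The clean resolution is the ordinal induction indicated above — $df$ involves only strictly earlier generators, so by the time we verify $d$-compatibility on $t_f$ we may assume \eqref{eq:cyl-theta} for all morphisms generated by earlier generators, in particular for $df$. Everything else is routine verification on generators, made painless by Proposition \ref{prp:properties-t-theta}.
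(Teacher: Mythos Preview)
Your proof is correct and follows essentially the same path as the paper's. The one place you diverge is in the perceived ``entanglement'' between (1) and (2): you propose an ordinal induction along the generators of $\cC$ to break the apparent circularity. The paper sidesteps this more simply by observing that $\Cyl(F)$ is \emph{already a functor} of graded $k$-linear categories by construction---its values are specified on free generators of $\Cyl(\cC)$ and then extended multiplicatively, so composition and identities are preserved automatically. Since the identity $\Cyl(F)(t_\theta)=t_{F(\theta)}$ in (2) is a purely multiplicative statement (no differentials enter), it holds for all $\theta$ at once from this graded-functor structure together with \eqref{eq:theta-equation}, exactly by the computation you describe. With (2) established, the $d$-compatibility check on $t_f$ in (1) goes through directly via \eqref{eq:theta-differential}, with no induction needed. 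Your ordinal induction is not wrong, just unnecessary; the paper's observation that ``functor'' is free while only ``dg'' needs checking is the cleaner way to dissolve the obstacle you flagged.
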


\begin{proof}
	We need to show that $\Cyl(F)$ is indeed a dg functor (it is already a functor by definition). The only nontrivial part is showing that $d(\Cyl(F)(t_f))=\Cyl(F)(dt_f)$ for any generating morphism $f$ in $\cC$. To verify this, we need to prove the identity
	\[\Cyl(F)(t_{\theta})=t_{F(\theta)}\]
	for any morphism $\theta$ in $\cC$. Assume without loss of generality that $\theta$ is given by
	\[\theta= f_n\circ\ldots\circ f_1\]
	for some generating morphisms $f_j$ of $\cC$. Then, since $\Cyl(F)$ is a functor, we have
	\begin{align*}
		\Cyl(F)(t_{\theta})&=\Cyl(F)\left(\sum_{j=1}^{n}(-1)^{|f_{j-1}|+\ldots+|f_{1}|} i_2(f_{n})\circ\ldots\circ i_2(f_{j+1})\circ t_{f_{j}}\circ i_1(f_{j-1})\circ\ldots\circ i_1(f_{1})\right)\\
		&=\sum_{j=1}^{n}(-1)^{|f_{j-1}|+\ldots+|f_{1}|} i_2(F(f_{n}))\circ\ldots\circ i_2(F(f_{j+1}))\circ t_{F(f_{j})}\circ i_1(F(f_{j-1}))\circ\ldots\circ i_1(F(f_{1}))\\
		&=t_{F(\theta)}
	\end{align*}
	by the identity (\ref{eq:theta-equation}) since $F(\theta)=F(f_n)\circ\ldots\circ F(f_1)$.
	
	Using this and the identity \eqref{eq:theta-differential}, for any generating morphism $f\colon A\to B$ in $\cC$, we see that
	\begin{align*}
		d(\Cyl(F)(t_{f}))&=dt_{F(f)}\\
		&=(-1)^{|F(f)|}(i_2(F(f))\circ t_{F(A)} - t_{F(B)}\circ i_1(F(f))) + t_{d(F(f))}\\
		&=(-1)^{|f|}(i_2(F(f))\circ t_{F(A)}  - t_{F(B)}\circ i_1(F(f))) + t_{F(df)}\\
		&=\Cyl(F)((-1)^{|f|}(i_2(f)\circ t_A - t_B\circ i_1(f)) + t_{df})\\
		&=\Cyl(F)(dt_{f})
	\end{align*}
	which shows that $\Cyl(F)$ is a dg functor.
	
	Finally, the commutation of the diagram (\ref{eq:cylinder-morphism-diagram}) is obvious from the construction.
\end{proof}

Theorem \ref{thm:cylinder-morphism} suffices for the purpose of defining the homotopy colimit functor in Section \ref{sec:hocolim-semifree}. However, we proceed to establish a cylinder functor for semifree dg categories in Corollary \ref{cor:cyl-functor}. This construction plays a key role in defining an $I$-category (a category with a natural cylinder functor), thus giving rise to a cofibration category of semifree dg categories in Section \ref{sec:cofibration-category}.

\begin{cor}\label{cor:cyl-functor}
	The assignment
	\begin{align*}
		\Cyl\colon\dgCat_s &\to\dgCat_s\\
		\cC&\mapsto \Cyl(\cC)\\
		F &\mapsto \Cyl(F)
	\end{align*}
	is a cylinder functor for $\dgCat_s$, where $\dgCat_s$ is the full subcategory of the model category of dg categories $\dgCat$ (with Dwyer-Kan, quasi-equiconic, or Morita model structure) consisting of semifree dg categories, and $\Cyl(\cC)$ is as defined in Definition \ref{dfn:cylinder-pre-dg}, and $\Cyl(F)$ is as defined in Theorem \ref{thm:cylinder-morphism}.
\end{cor}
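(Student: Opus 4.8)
The plan is to observe that the three ``cylinder object'' conditions of Definition \ref{dfn:cylinder-functor} --- that $i_{\cC}=i_1\amalg i_2\colon\cC\amalg\cC\to\Cyl(\cC)$ is a cofibration, that $p_{\cC}\colon\Cyl(\cC)\to\cC$ is an acyclic fibration, and that $p_{\cC}\circ i_{\cC}=\nabla_{\cC}$ --- are exactly the content of Theorem \ref{thm:cylinder-object} applied to each semifree dg category $\cC$, while the commutativity of the naturality square \eqref{eq:cylinder-functor} for a dg functor $F\colon\cC\to\cD$ is exactly Theorem \ref{thm:cylinder-morphism}(3) (the square \eqref{eq:cylinder-morphism-diagram}). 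Note that $\dgCat_s$ is closed under binary coproducts --- concatenating the generating sets of $\cC$ and $\cD$ with a compatible ordinal indexing exhibits $\cC\amalg\cD$ as semifree --- so $i_{\cC}$, $p_{\cC}$ and the square all live in $\dgCat_s$, with the notions of cofibration, acyclic fibration and weak equivalence inherited from the ambient model structure on $\dgCat$ (Theorem \ref{thm:dgcat-model}); the argument is insensitive to which of the Dwyer--Kan, quasi-equiconic or Morita structure is used. Hence the only point not already established is that $\cC\mapsto\Cyl(\cC)$, $F\mapsto\Cyl(F)$ is genuinely a \emph{functor}.

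To make this unambiguous I would first fix, once and for all, a set of generating morphisms for each semifree dg category, so that $\Cyl(\cC)$ (Definition \ref{dfn:cylinder-pre-dg}) and $\Cyl(F)$ (Theorem \ref{thm:cylinder-morphism}) are strictly defined; by Remark \ref{rmk:cyl-object-well-defined} different choices yield canonically isomorphic cylinders, so nothing essential depends on this. One then checks functoriality on the generating morphisms of $\Cyl(\cC)$ --- the objects $i_1(C),i_2(C)$, the decorated morphisms $t_C,t_C',\hat t_C,\check t_C,\bar t_C$, the images $i_1(\theta),i_2(\theta)$ of generators of $\cC$, and the morphisms $t_f$ --- since these generate $\Cyl(\cC)$ as an algebra. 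For the identity, $\Cyl(1_{\cC})$ fixes every object and every $i_j(\theta)$, sends each decorated morphism to itself because $1_{\cC}(C)=C$, and sends $t_f$ to $t_{1_{\cC}(f)}=t_f$ (Definition \ref{dfn:generalised-t-morphisms} returns $t_f$ on a generator), so $\Cyl(1_{\cC})=1_{\Cyl(\cC)}$. For composition, given $F\colon\cC\to\cD$ and $G\colon\cD\to\cE$, the equality $\Cyl(G\circ F)=\Cyl(G)\circ\Cyl(F)$ on objects and on the $i_j(\theta)$ is immediate from functoriality of $F$ and $G$; on the decorated morphisms it follows from, e.g., $\Cyl(G)(\Cyl(F)(t_C))=\Cyl(G)(t_{F(C)})=t_{G(F(C))}=\Cyl(G\circ F)(t_C)$, and likewise for $t_C',\hat t_C,\check t_C,\bar t_C$; and on a generator $f$ of $\cC$ one computes
\[\Cyl(G)(\Cyl(F)(t_f))=\Cyl(G)(t_{F(f)})=t_{G(F(f))}=t_{(G\circ F)(f)}=\Cyl(G\circ F)(t_f).\]

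The only genuinely nontrivial point --- and the reason Theorem \ref{thm:cylinder-morphism} was arranged as it is --- is the middle equality in the last display: $F(f)$ is in general \emph{not} a generating morphism of $\cD$, merely a morphism, so the defining formula for $\Cyl(G)$ on generators does not apply directly. Instead one invokes part (2) of Theorem \ref{thm:cylinder-morphism}, i.e. the identity $\Cyl(G)(t_{\theta})=t_{G(\theta)}$ of \eqref{eq:cyl-theta} valid for \emph{all} morphisms $\theta$ of $\cD$, applied with $\theta=F(f)$. Thus the substantive work is already contained in Theorems \ref{thm:cylinder-object} and \ref{thm:cylinder-morphism}, and the corollary is essentially the assembly of those pieces; I expect no obstacle beyond carefully matching the bookkeeping of generators. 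One may also record in passing that the explicit formula for $\Cyl(F)$ restricts to $F\amalg F$ on $\cC\amalg\cC$ and becomes $F$ after composing with the maps $p$, so that $i_1,i_2\colon 1_{\dgCat_s}\Rightarrow\Cyl$ and $p\colon\Cyl\Rightarrow 1_{\dgCat_s}$ are honest natural transformations, as stated in Theorem \ref{thm-intro:cyl}.
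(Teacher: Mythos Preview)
Your proposal is correct and follows essentially the same approach as the paper: verify functoriality by checking $\Cyl(1_{\cC})=1_{\Cyl(\cC)}$ and $\Cyl(G\circ F)=\Cyl(G)\circ\Cyl(F)$, the latter via the identity \eqref{eq:cyl-theta} from Theorem \ref{thm:cylinder-morphism}(2), and then invoke the commutative diagram \eqref{eq:cylinder-morphism-diagram} together with Theorem \ref{thm:cylinder-object} for the cylinder functor axioms. The paper's proof is simply a terser version of what you wrote.
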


\begin{proof}
	Clearly, $\Cyl(1_{\cC})=1_{\Cyl(\cC)}$ where $1_{\cC}\colon\cC\to\cC$ is the identity functor on $\cC$. Also, the equality $\Cyl(G\circ F)=\Cyl(G)\circ\Cyl(F)$ directly follows from the definition of $\Cyl(F)$ and the identity (\ref{eq:cyl-theta}), hence $\Cyl$ is a functor. Then, the commutative diagram \eqref{eq:cylinder-morphism-diagram} shows that $\Cyl$ is indeed a cylinder functor in the sense of Definition \ref{dfn:cylinder-functor}.
\end{proof}

\begin{rmk}\label{rmk:cyl-functor-well-defined}
	To establish the well-definedness of the functor $\Cyl$ up to natural isomorphism, let us denote the application of $\Cyl$ to a semifree dg category $\cC$ with a predetermined set of generating morphisms $\{f_i\}$ by $\Cyl(\cC,\{f_i\})$. By applying $\Cyl$ to the identity functor $1_{\cC} \colon\cC\to\cC$, we get an isomorphism
	\[\Cyl(1_{\cC})\colon\Cyl(\cC,\{f_i\})\xrightarrow{\sim}\Cyl(\cC,\{g_i\})\]
	for any set of generating morphisms $\{g_i\}$ of $\cC$, which follows from the fact that $\Cyl$ is a functor. Hence, $\Cyl(\cC)$ is well-defined up to isomorphism. Lastly, if there is a functor $F\colon \cC\to\cD$ between semifree dg categories, it induces a commutative square
	\[\begin{tikzcd}
		\Cyl(\cC,\{f_i\})\ar[r,"\Cyl(1_{\cC})","\sim"']\ar[d,"\Cyl(F)"] & \Cyl(\cC,\{g_i\})\ar[d,"\Cyl(F)"]\\
		\Cyl(\cD,\{f_i'\})\ar[r,"\Cyl(1_{\cD})","\sim"'] & \Cyl(\cD,\{g'_i\})
	\end{tikzcd} .\]
	Here, $\{f_i'\}$ and $\{g_i'\}$ are two arbitrary sets of generating morphisms of $\cD$. Hence, $\Cyl$ is well-defined up to natural isomorphism.
\end{rmk}

\begin{rmk}\label{rmk:cylinder-functor-dg-algebras}
	The cylinder object and functor can be also defined for the model category of dg algebras. In that case, for a given semifree dg algebra $\cC$, $\cC\amalg\cC$ is the semifree dg algebra (with the same unique object as $\cC$ by definition) whose generating morphisms are doubled, and $i_1,i_2\colon\cC\to\cC\amalg\cC$ are the obvious inclusions. $\Cyl(\cC)$ is defined as the semifree extension of $\cC\amalg\cC$ by the morphisms $t_f$, where $f$ is a generating morphism of $\cC$ (no $t_C,t'_C,\hat t_C,\check t_C,\bar t_C$ are used for $C\in\cC$). All the formulas still hold by setting
	\[t_C=1,\quad t_C'=1,\quad \hat t_C=0,\quad\check t_C=0,\quad \bar t_C=0.\]
	Given a morphism $F\colon\cC\to\cD$ between semifree dg algebras, $\Cyl(F)$ is just defined by
	\[\Cyl(F)(t_f)=t_{F(f)}\]
	for every generating morphism $f$ in $\cC$. Hence, this defines a cylinder functor for the model category of dg algebras as in Corollary \ref{cor:cyl-functor}. The proofs are the similar in the case of semifree dg algebras.
\end{rmk}

Finally, we want to discuss the cylinder object for the dg localization $\cC[S^{-1}]$, where $\cC$ is a semifree dg category and $S$ is a subset of closed degree zero morphisms in $\cC$. We note that since $\cC[S^{-1}]$ can be also seen as a semifree dg category, one can apply Theorem \ref{thm:cylinder-object} and \ref{thm:cylinder-morphism} to $\cC[S^{-1}]$. But we would like to discuss a simpler cylinder object for $\cC[S^{-1}]$ for later convenience.

By the description of dg localization in Proposition \ref{prp:localise-semifree}, we know that $\cC[S^{-1}]$ can be express as a semifree extension of $\cC$ by the morphisms
\[g'\colon B\to A,\quad \hat g\colon A\to A,\quad \check g\colon B\to B,\quad\bar g\colon A\to B\]
for every $g\colon A\to B$ in $S$, whose gradings and differentials are given as in Proposition \ref{prp:localise-semifree}. Hence, the generating morphisms of $\cC[S^{-1}]$ are the generating morphisms of $\cC$ and the morphisms $g',\hat g,\check g,\bar g$ for each morphism $g$ in $S$.

Recall that $i_1,i_2\colon\cC\to\cC\amalg\cC\hookrightarrow\Cyl(\cC)$ are the inclusions to the first and second copies, respectively. Then, it is easy to see that $\Cyl(\cC[S^{-1}])$ is the semifree extension of $\Cyl(\cC)[(i_1(S)\sqcup i_2(S))^{-1}]$ by the morphisms
\[t_{g'}\colon i_1(B)\to i_2(A),\quad t_{\hat g}\colon i_1(A)\to i_2(A),\quad t_{\check g}\colon i_1(B)\to i_2(B),\quad t_{\bar g}\colon i_1(A)\to i_2(B)\]
for every $g\colon A\to B$ in $S$.

Note that $\Cyl(\cC)[(i_1(S)\sqcup i_2(S))^{-1}]$ is the semifree extension of $\cC[S^{-1}]\amalg\cC[S^{-1}]$ by the morphisms
\begin{itemize}
	\item $t_C,t_C',\hat t_C,\check t_C,\bar t_C$ for each $C\in\cC$, and
	\item $t_f$ for each generating morphism $f$ in $\cC$.
\end{itemize}
In \cite{hocolim}, it is shown that one can choose a cylinder object for $\cC[S^{-1}]$ that is simpler than $\Cyl(\cC[S^{-1}])$ in the sense that it does not need the generating morphisms $t_{g'},t_{\hat g},t_{\check g},t_{\bar g}$:

\begin{thm}[\cite{hocolim}]\label{thm:cylinder-loc}
	Let $\cC$ be a semifree dg category, and $S$ be a subset of closed degree zero morphisms in $\cC$. Then $\Cyl(\cC)[(i_1(S)\sqcup i_2(S))^{-1}]$ is a cylinder object for $\cC[S^{-1}]$. That is, the semifree extension
	\[i_{\cC[S^{-1}]}:=i_1\amalg i_2\colon\cC[S^{-1}]\amalg\cC[S^{-1}]\to\Cyl(\cC)[(i_1(S)\sqcup i_2(S))^{-1}]\]
	is a cofibration, and the functor
	\begin{align*}
		p_{\cC[S^{-1}]}\colon\Cyl(\cC)[(i_1(S)\sqcup i_2(S))^{-1}]&\to\cC[S^{-1}]\\
		i_1(C),i_2(C)\mapsto C,\quad t_C,t_C'&\mapsto 1_C,\quad\hat t_C,\check t_C,\bar t_C\mapsto 0 &&\text{for each object $C\in\cC$}\\
		t_f&\mapsto 0 &&\text{for each generating morphism $f$ in $\cC$}\\
		i_1(\theta),i_2(\theta)&\mapsto \theta&&\text{for each morphism $\theta$ in $\cC[S^{-1}]$}
	\end{align*}
	is an acyclic fibration, and they satisfy $p_{\cC[S^{-1}]}\circ i_{\cC[S^{-1}]}=\nabla_{\cC[S^{-1}]}$.
\end{thm}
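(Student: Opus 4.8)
The plan is to verify the three assertions in turn, reducing everything to Theorem~\ref{thm:cylinder-object} for $\cC$ together with standard properties of dg localization. Recall from the discussion preceding the theorem that $\Cyl(\cC)[(i_1(S)\sqcup i_2(S))^{-1}]$ is the semifree extension of $\cC[S^{-1}]\amalg\cC[S^{-1}]$ by the morphisms $t_C,t_C',\hat t_C,\check t_C,\bar t_C$ (for $C\in\cC$) and $t_f$ (for $f$ a generating morphism of $\cC$), with the gradings and differentials of Definition~\ref{dfn:cylinder-pre-dg}; since the differential of each new generator again lies in the subalgebra generated by the earlier ones, this is genuinely a semifree extension, so $i_{\cC[S^{-1}]}=i_1\amalg i_2$ is a cofibration. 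Equivalently, one realizes $\Cyl(\cC)[(i_1(S)\sqcup i_2(S))^{-1}]$ as the pushout of the semifree extension $i_{\cC}\colon\cC\amalg\cC\to\Cyl(\cC)$ along the localization $\cC\amalg\cC\to\cC[S^{-1}]\amalg\cC[S^{-1}]$ and applies Proposition~\ref{prp:colimit-dg}. The identity $p_{\cC[S^{-1}]}\circ i_{\cC[S^{-1}]}=\nabla_{\cC[S^{-1}]}$ then follows immediately from the displayed formulas, since both $i_1(C),i_2(C)$ are sent to $C$ and both $i_1(\theta),i_2(\theta)$ to $\theta$.

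Next I would check that $p_{\cC[S^{-1}]}$ is a well-defined dg functor and an acyclic fibration. Writing $\ell$ for the localization $\cC\to\cC[S^{-1}]$, the composite $\ell\circ p_{\cC}\colon\Cyl(\cC)\to\cC[S^{-1}]$ sends every morphism of $i_1(S)\sqcup i_2(S)$ to an isomorphism $\ell(g)$ of $\cC[S^{-1}]$, hence by the universal property of dg localization it factors uniquely through $\Cyl(\cC)\to\Cyl(\cC)[(i_1(S)\sqcup i_2(S))^{-1}]$; comparing values on generators identifies the resulting functor with $p_{\cC[S^{-1}]}$. Since the three model structures on $\dgCat$ share the same cofibrations, they share the same acyclic fibrations, and a dg functor is an acyclic fibration exactly when it is surjective on objects and induces a surjective quasi-isomorphism on every hom complex (see \cite{tabuada-model}). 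Surjectivity on objects is clear, as localization adds no objects and $i_1(C),i_2(C)\mapsto C$. Degreewise surjectivity on hom complexes is immediate as well: a morphism $\phi\colon A\to B$ in $\cC[S^{-1}]$ is the image under $p_{\cC[S^{-1}]}$ of $i_1(\phi)$, of $i_2(\phi)$, of $t_B\circ i_1(\phi)$, or of $i_1(\phi)\circ t_A'$, according to whether the source and target objects are of type $i_1$ or $i_2$. It then remains to see that $p_{\cC[S^{-1}]}$ induces quasi-isomorphisms on hom complexes; as $p_{\cC[S^{-1}]}$ is essentially surjective (being surjective on objects), this is equivalent to $p_{\cC[S^{-1}]}$ being a quasi-equivalence.

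To see that $p_{\cC[S^{-1}]}$ is a quasi-equivalence: by Theorem~\ref{thm:cylinder-object}, $p_{\cC}\colon\Cyl(\cC)\to\cC$ is a quasi-equivalence, and it carries the set $i_1(S)\sqcup i_2(S)$ onto $S$. Because dg localization preserves quasi-equivalences (see \cite{toen-morita}; concretely, the universal property of $\cA[T^{-1}]$ depends on $T$ only through the set of morphisms to be inverted, so an equivalence $\cA\xrightarrow{\sim}\cB$ induces an equivalence $\cA[T^{-1}]\xrightarrow{\sim}\cB[F(T)^{-1}]$), inverting $i_1(S)\sqcup i_2(S)$ in the source of $p_{\cC}$ and $S$ in its target yields a quasi-equivalence
\[\Cyl(\cC)[(i_1(S)\sqcup i_2(S))^{-1}]\longrightarrow\cC[S^{-1}].\]
A check on generators, including the inverse data $g',\hat g,\check g,\bar g$ adjoined in forming the localizations, identifies this functor with $p_{\cC[S^{-1}]}$. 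Hence $p_{\cC[S^{-1}]}$ is a quasi-equivalence, and together with the previous paragraph it is an acyclic fibration, which finishes the proof.

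The step I expect to cost the most is this last one: pinning down ``dg localization preserves quasi-equivalences" in a form that applies here, and carefully checking that the localized functor agrees with $p_{\cC[S^{-1}]}$ on every generator. A more hands-on alternative, not relying on that input, is to compare with the cylinder $\Cyl(\cC[S^{-1}])$ of the semifree dg category $\cC[S^{-1}]$: by the discussion before the theorem, $\Cyl(\cC[S^{-1}])$ is the semifree extension of $\Cyl(\cC)[(i_1(S)\sqcup i_2(S))^{-1}]$ by the morphisms $t_{g'},t_{\hat g},t_{\check g},t_{\bar g}$ ($g\in S$), and one checks on generators that $p^{\mathrm{full}}\circ j=p_{\cC[S^{-1}]}$, where $j$ denotes this extension and $p^{\mathrm{full}}$ is the acyclic fibration furnished by Theorem~\ref{thm:cylinder-object}; it then suffices to show $j$ is a quasi-equivalence. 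Computing the differentials from \eqref{eq:theta-differential} and \eqref{eq:theta-equation} shows $t_{g'}$ occurs in $dt_{\hat g}$ and $dt_{\check g}$, and $t_{\hat g},t_{\check g}$ occur in $dt_{\bar g}$; using the homotopy-invertibility of $i_1(g)$ and $i_2(g)$ one changes basis (Proposition~\ref{prp:variable-change}) so that the four new generators attached to each $g$ cancel in pairs via Proposition~\ref{prp:destabilisation}, and organizing this cancellation cleanly is the main hurdle of that route.
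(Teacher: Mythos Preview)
The paper does not supply a proof of this theorem: it is quoted from \cite{hocolim} and stated without argument, so there is no ``paper's own proof'' to compare against. Your plan is self-contained and sound. The reduction of the cofibration claim to the explicit semifree-extension description given just before the theorem is exactly right, and the factorization $p_{\cC[S^{-1}]}\circ i_{\cC[S^{-1}]}=\nabla_{\cC[S^{-1}]}$ is immediate. For the acyclic-fibration claim, both of your routes work. The first (localize the quasi-equivalence $p_{\cC}$ and invoke that dg localization preserves quasi-equivalences) is the cleanest conceptually; the point you flag---matching the induced functor with the displayed $p_{\cC[S^{-1}]}$ on the generators $g',\hat g,\check g,\bar g$---is genuine but routine once one uses the explicit model of Proposition~\ref{prp:localise-semifree} on both sides. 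Your second route, comparing with $\Cyl(\cC[S^{-1}])$ via the inclusion $j$ and cancelling $t_{g'},t_{\hat g},t_{\check g},t_{\bar g}$ in pairs by Propositions~\ref{prp:variable-change} and~\ref{prp:destabilisation}, is closer in spirit to how \cite{hocolim} argues and avoids any black-box about localization; the basis change you need is precisely encoded in the formulas \eqref{eq:t-theta-loc}, which express each of these four generators in terms of the others and morphisms already present in $\Cyl(\cC)[(i_1(S)\sqcup i_2(S))^{-1}]$.
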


Let $\cC$ and $\cD$ be a semifree dg categories, and $S_{\cC}$ and $S_{\cD}$ be subsets of closed degree zero morphisms in $\cC$ and $\cD$, respectively. For a given functor $F\colon\cC[S_{\cC}^{-1}]\to\cD[S_{\cD}^{-1}]$, Theorem \ref{thm:cylinder-morphism} constructs the dg functor
\[\Cyl(F)\colon\Cyl(\cC[S_{\cC}^{-1}])\to\Cyl(\cD[S_{\cD}^{-1}]) .\]
However, in such cases, i.e., when a dg category is given as a dg localization, we want to work with its simpler cylinder object given by Theorem \ref{thm:cylinder-loc}. Hence, we conclude this section with a construction of a dg functor between the simpler cylinder objects:

\begin{thm}\label{thm:cylinder-morphism-loc}
	Let $\cC$ and $\cD$ be a semifree dg categories, and $S_{\cC}$ and $S_{\cD}$ be subsets of closed degree zero morphisms in $\cC$ and $\cD$, respectively. Let $F\colon\cC[S_{\cC}^{-1}]\to\cD[S_{\cD}^{-1}]$ be a dg functor.
	\begin{enumerate}
	\item There is a dg functor
	\[\Cyl(F)\colon\Cyl(\cC)[(i_1(S_{\cC})\sqcup i_2(S_{\cC}))^{-1}]\to\Cyl(\cD)[(i_1(S_{\cD})\sqcup i_2(S_{\cD}))^{-1}]\]
	between cylinder objects of $\cC[S_{\cC}^{-1}]$ and $\cD[S_{\cD}^{-1}]$ given by Theorem \ref{thm:cylinder-loc}, which is an extension of the dg functor
	\[F\amalg F\colon \cC[S_{\cC}^{-1}]\amalg\cC[S_{\cC}^{-1}] \to \cD[S_{\cD}^{-1}]\amalg\cD[S_{\cD}^{-1}]\]
	by additionally specifying
	\begin{align*}
		t_C, t_C',\hat t_C,\check t_C,\bar t_C&\mapsto t_{F(C)}, t_{F(C)}',\hat t_{F(C)},\check t_{F(C)},\bar t_{F(C)}&&\text{respectively, for each object $C\in\cC$}\\
		t_f&\mapsto t_{F(f)} &&\text{for each generating morphism $f$ in $\cC$}
	\end{align*}
	where $t_{F(f)}$ is defined according to Definition \ref{dfn:generalised-t-morphisms}, and for any $g\colon A\to B$ in $S_{\cD}$, we define the morphisms
	\begin{align}\label{eq:t-theta-loc}
		t_{g'} &:= -i_2(g')\circ t_g\circ i_1(g')-i_2(\hat g)\circ t_A\circ i_1(g')+ i_2(g')\circ t_B\circ  i_1(\check g)+i_2(\hat g\circ g'-g'\circ\check g)\circ t_B\\
		t_{\hat g} &:=i_2(g')\circ t_g\circ i_1(\hat g)+i_2(\hat g)\circ t_A\circ i_1(\hat g)+i_2(g')\circ t_B\circ i_1(\bar g)-i_2(\hat g\circ \hat g+g'\circ\bar g)\circ t_A\notag\\
		&\hspace{11cm} -i_2(\hat g\circ g' - g'\circ \check g)\circ t_g\notag\\
		t_{\check g} &:= - i_2(\check g)\circ t_g\circ i_1(g')+ i_2(\check g)\circ t_B\circ i_1(\check g)+ i_2(\bar g)\circ t_A\circ i_1(g')-i_2(\check g\circ \check g+\bar g\circ g')\circ t_B\notag\\
		t_{\bar g} &:= - i_2(\check g)\circ t_g\circ i_1(\hat g)- i_2(\check g)\circ t_B\circ i_1(\bar g)+ i_2(\bar g)\circ t_A\circ i_1(\hat g)+i_2(\check g\circ \bar g-\bar g\circ\hat g)\circ t_A\notag\\
		&\hspace{11cm} -i_2(\check g\circ \check g + \bar g\circ g')\circ t_g\notag
	\end{align}
	in $\Cyl(\cD)[(i_1(S_{\cD})\sqcup i_2(S_{\cD}))^{-1}]$.
	
	\item The dg functor $\Cyl(F)$ satisfies
	\begin{equation*}\Cyl(F)(t_{\theta})=t_{F(\theta)}\end{equation*}
	for any morphism $\theta$ in $\cC[S_{\cC}^{-1}]$.
	
	\item The diagram
	\begin{equation*}\begin{tikzcd}
			\cC[S_{\cC}^{-1}]\amalg\cC[S_{\cC}^{-1}]\ar[r,"F\amalg F"]\ar[d,"i_{\cC[S_{\cC}^{-1}]}"] & \cD[S_{\cD}^{-1}]\amalg\cD[S_{\cD}^{-1}]\ar[d,"i_{\cD[S_{\cD}^{-1}]}"]\\
			\Cyl(\cC)[(i_1(S_{\cC})\sqcup i_2(S_{\cC}))^{-1}] \ar[r,"\Cyl(F)"]\ar[d,"p_{\cC[S_{\cC}^{-1}]}"] & \Cyl(\cD)[(i_1(S_{\cD})\sqcup i_2(S_{\cD}))^{-1}]\ar[d,"p_{\cD[S_{\cD}^{-1}]}"]\\
			\cC[S_{\cC}^{-1}]\ar[r,"F"] & \cD[S_{\cD}^{-1}]
	\end{tikzcd}\end{equation*}
	commutes.
	\end{enumerate}
\end{thm}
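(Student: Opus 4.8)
The plan is to reduce everything to Theorem~\ref{thm:cylinder-morphism}, viewing the dg localizations $\cC[S_{\cC}^{-1}]$ and $\cD[S_{\cD}^{-1}]$ as semifree dg categories via Proposition~\ref{prp:localise-semifree}, and then to compare the ``large'' cylinder $\Cyl(\cC[S_{\cC}^{-1}])$ with the ``small'' one $\Cyl(\cC)[(i_1(S_{\cC})\sqcup i_2(S_{\cC}))^{-1}]$ of Theorem~\ref{thm:cylinder-loc}. Concretely, by Proposition~\ref{prp:localise-semifree} a set of generating morphisms of $\cC[S_{\cC}^{-1}]$ consists of the generating morphisms $f$ of $\cC$ together with $g',\hat g,\check g,\bar g$ for each $g\in S_{\cC}$; applying Theorem~\ref{thm:cylinder-morphism} to $F$ (now a dg functor between semifree dg categories) produces a dg functor $\Cyl(\cC[S_{\cC}^{-1}])\to\Cyl(\cD[S_{\cD}^{-1}])$ sending $t_f\mapsto t_{F(f)}$, $t_{g'}\mapsto t_{F(g')}$, $t_{\hat g}\mapsto t_{F(\hat g)}$, etc. The discussion preceding Theorem~\ref{thm:cylinder-loc} exhibits $\Cyl(\cC[S_{\cC}^{-1}])$ as the semifree extension of $\Cyl(\cC)[(i_1(S_{\cC})\sqcup i_2(S_{\cC}))^{-1}]$ by the extra generators $t_{g'},t_{\hat g},t_{\check g},t_{\bar g}$ (for $g\in S_{\cC}$), and that extension is a stabilization in the sense of Proposition~\ref{prp:destabilisation}, so the inclusion $\Cyl(\cC)[(i_1(S_{\cC})\sqcup i_2(S_{\cC}))^{-1}]\hookrightarrow\Cyl(\cC[S_{\cC}^{-1}])$ is a quasi-equivalence realizing the ``simpler'' cylinder object. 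The first task is thus to define $\Cyl(F)$ on the small cylinder so that it is compatible with the large one under these inclusions.

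The key computation is the following: in $\Cyl(\cD)[(i_1(S_{\cD})\sqcup i_2(S_{\cD}))^{-1}]$, the elements $t_{g'},t_{\hat g},t_{\check g},t_{\bar g}$ of Definition~\ref{dfn:generalised-t-morphisms} applied to the generators $g',\hat g,\check g,\bar g$ of $\cD[S_{\cD}^{-1}]$ can be rewritten, using the identity \eqref{eq:theta-equation} of Proposition~\ref{prp:properties-t-theta}, in terms only of $t_g$, $t_A$, $t_B$, $i_1$ and $i_2$ of already-existing generators --- precisely because the defining equations of Proposition~\ref{prp:localise-semifree} express $dg',d\hat g,d\check g,d\bar g$ (and hence, after applying the localization inversion, the morphisms themselves) in terms of $g$, $g'$, $\hat g$, $\check g$, $\bar g$ and the identities, and because in the localization $g'$ is a quasi-inverse of $g$. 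I would apply \eqref{eq:theta-equation} to the expressions $1_A - g'g = d\hat g$, $1_B - gg' = d\check g$, $g\hat g - \check g g = d\bar g$, together with $dt_g = t_A$-type and $t_B$-type terms coming from \eqref{eq:theta-differential} with $\theta=g$ (note $dg=0$, $|g|=0$, so $dt_g = i_2(g)t_A - t_B i_1(g)$), and solve for $t_{g'}$, $t_{\hat g}$, $t_{\check g}$, $t_{\bar g}$. This is exactly the source of the four formulas in \eqref{eq:t-theta-loc}; verifying that those closed-form expressions agree with $t_{g'},t_{\hat g},t_{\check g},t_{\bar g}$ under the inclusion into the large cylinder is the heart of part~(1). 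Once that is done, defining $\Cyl(F)$ on the small cylinder by $t_f\mapsto t_{F(f)}$ and by the right-hand sides of \eqref{eq:t-theta-loc} (with $g$ replaced by $F$ applied appropriately) and invoking functoriality of dg localization makes $\Cyl(F)$ a well-defined dg functor; the check that $d$ is respected on $t_f$ is \emph{verbatim} the computation in the proof of Theorem~\ref{thm:cylinder-morphism}, and on $t_{g'},t_{\hat g},t_{\check g},t_{\bar g}$ it follows from that same computation carried out in the large cylinder and pushed down.

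For part~(2), the identity $\Cyl(F)(t_\theta)=t_{F(\theta)}$ for every morphism $\theta$ in $\cC[S_{\cC}^{-1}]$ follows by linearity and multiplicativity of $\theta\mapsto t_\theta$ (Proposition~\ref{prp:properties-t-theta}) once it is known on the generators $f$, $g'$, $\hat g$, $\check g$, $\bar g$; on $f$ it is the definition, and on the four localization generators it is precisely the content of \eqref{eq:t-theta-loc} together with the compatibility $\Cyl(F)(t_g)=t_{F(g)}$, $\Cyl(F)(i_j(g'))=i_j(F(g'))$, etc. For part~(3), the top square commutes because $\Cyl(F)$ was defined as an extension of $F\amalg F$, and the bottom square commutes because $p_{\cC[S_{\cC}^{-1}]}$ kills all $t$-generators (including, one checks from \eqref{eq:t-theta-loc}, $t_{g'},t_{\hat g},t_{\check g},t_{\bar g}$, since each summand there contains a factor $t_{(-)}$) and restricts to the identity on $i_1,i_2$, exactly as in Theorem~\ref{thm:cylinder-loc}; so $p\circ\Cyl(F)$ and $F\circ p$ agree on all generators.

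I expect the main obstacle to be part~(1): namely, deriving and then rigorously verifying the four formulas in \eqref{eq:t-theta-loc}. The derivation is a bookkeeping exercise with signs and with the non-closed morphisms $\hat g,\check g,\bar g$ (so \eqref{eq:theta-differential} contributes $t_{d\theta}$ terms that must be tracked), and one must be careful that the resulting expressions are genuinely well-defined elements of the \emph{small} cylinder (i.e.\ do not secretly require $t_{g'}$ etc.), which is what forces the particular combinations appearing there. Everything else --- functoriality, the $d$-compatibility on $t_f$, and the two commuting squares --- is a routine transcription of the arguments already given for Theorems~\ref{thm:cylinder-morphism} and~\ref{thm:cylinder-loc}.
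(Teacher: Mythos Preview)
Your treatment of parts~(2) and~(3) is fine, and you correctly identify that the substantive content lies in checking that the assignment $t_f\mapsto t_{F(f)}$ respects differentials, for which one needs the formulas \eqref{eq:t-theta-loc} to satisfy identity~\eqref{eq:theta-differential}. However, your framing of part~(1) through the large cylinder $\Cyl(\cD[S_{\cD}^{-1}])$ is confused and does not work as stated. Two specific points. First, Definition~\ref{dfn:generalised-t-morphisms} in the small cylinder $\Cyl(\cD)[(i_1(S_{\cD})\sqcup i_2(S_{\cD}))^{-1}]$ only applies to morphisms of $\cD$, since that cylinder is built from $\cD$; the morphism $g'$ lives in $\cD[S_{\cD}^{-1}]\setminus\cD$, so there is no a~priori element ``$t_{g'}$'' in the small cylinder to rewrite. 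Second, in the large cylinder the symbols $t_{g'},t_{\hat g},t_{\check g},t_{\bar g}$ are \emph{free} added generators over the small cylinder, so nothing in the image of the inclusion can equal them; your claim that the expressions \eqref{eq:t-theta-loc} ``agree with $t_{g'},\ldots$ under the inclusion into the large cylinder'' is simply false. There is also nothing to ``solve for'': the formulas \eqref{eq:t-theta-loc} are part of the statement, posited as definitions of new elements in the small cylinder.

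The paper's proof is more direct and bypasses the large cylinder entirely. One takes \eqref{eq:t-theta-loc} as the \emph{definition} of $t_{g'},t_{\hat g},t_{\check g},t_{\bar g}$ in $\Cyl(\cD)[(i_1(S_{\cD})\sqcup i_2(S_{\cD}))^{-1}]$, and verifies by a tedious but elementary computation that each one satisfies \eqref{eq:theta-differential}; for instance $dt_{g'}=i_2(g')\circ t_B - t_A\circ i_1(g')$ since $dg'=0$ and $|g'|=0$. Once that holds, Proposition~\ref{prp:properties-t-theta} extends verbatim to all morphisms of $\cD[S_{\cD}^{-1}]$, treating the generators of $\cD$ together with $g',\hat g,\check g,\bar g$ (for $g\in S_{\cD}$) as the generating set and using \eqref{eq:t-theta-loc} for the latter. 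From there the argument of Theorem~\ref{thm:cylinder-morphism} carries over word for word to give (1) and (2), and (3) is the inspection of generators you describe.
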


\begin{proof}
	First, it is straightforward (although tedious) to check that \eqref{eq:theta-differential} holds for $t_{g'},t_{\hat g},t_{\check g},t_{\bar g}$ for every morphism $g$ in $S_{\cD}$ (and $S_{\cC}$). Then, one can similarly prove Proposition \ref{prp:properties-t-theta} for the morphisms in $\cD[S_{\cD}^{-1}]$ (and $\cC[S_{\cC}^{-1}]$). Hence, the first two items of Theorem \ref{thm:cylinder-morphism-loc} can be proven similar to Theorem \ref{thm:cylinder-morphism}. The last item, the commutation of the diagram, is straightforward to check.
\end{proof}

\section{Homotopy colimit functor on the diagrams of semifree dg categories}\label{sec:hocolim-semifree}

	Our goal in this section is to construct the homotopy colimit functor $\Ho(\dgCat^J)\to\Ho(\dgCat)$, where $\dgCat$ is the model category of dg categories with Dwyer-Kan, quasi-equiconic, or Morita model structure (see Theorem \ref{thm:dgcat-model}), and $J$ is a category given as follows:
	\[a\leftarrow c\rightarrow b\]
	Note that we equip $\dgCat^J$ with the Reedy model structure as in Proposition \ref{prp:reedy}.
	
	To achieve our goal, we will explicitly construct the homotopy colimit functor on $(\dgCat_s)^J$ in Theorem \ref{thm:hocolim-functor-dg} and Theorem \ref{thm:hocolim-functor-dg-loc}, where $\dgCat_s$ is the full subcategory of $\dgCat$ consisting of semifree dg categories. The category $(\dgCat_s)^J$ can be seen as a subcategory of $\Ho(\dgCat^J)$, and there is a way to lift the homotopy colimit functor on $(\dgCat_s)^J$ to $\Ho(\dgCat^J)$ via the commuting diagram
	\[\begin{tikzcd}
		(\dgCat_s)^J\ar[r,"j",hookrightarrow]\ar[rd,"l\circ j"] &\dgCat^J\ar[r,"T"]\ar[d,"l"] & \dgCat^J\ar[r,"Q"] & \pi(\dgCat^J)_c\ar[d,"l\,\circ\,\colim"] \\
		& \Ho(\dgCat^J)\ar[rr,"\hocolim"] & & \Ho(\dgCat)
	\end{tikzcd}\]
	where $j$ is the inclusion functor, $T$ is the functor given in Proposition \ref{prp:diagram-diagonal}, and $Q$ is a cofibrant resolution functor, which we will give in Lemma \ref{lem:cofibrant-resolution-dg}. The reason is as follows: $\hocolim$ (up to natural equivalence) is the unique lift of the composition $l\circ\colim\circ Q\circ T$ by Corollary \ref{cor:dwyer-hocolim}, and each object of $\dgCat^J$ is weakly equivalent to an object of $(\dgCat_s)^J$.

	Therefore, we only need to describe a cofibrant resolution functor $Q$ on the image of the functor $T\circ j$:
	
	\begin{lem}\label{lem:cofibrant-resolution-dg}
		There is a cofibrant resolution functor $Q\colon(\dgCat)^J\to\pi(\dgCat^J)_c$ satisfying the following:
		\begin{enumerate}
			\item $Q$ sends an object of the form
			\[X:=(\cA\amalg \cB\xleftarrow{\alpha\amalg\beta}\cC\amalg \cC\xrightarrow{\nabla_{\cC}}\cC)\]
			where $\cA,\cB,\cC$ are semifree dg categories and $\alpha,\beta$ are dg functors, to the object
			\[Q(X):=(\cA\amalg \cB\xleftarrow{\alpha\amalg\beta}\cC\amalg \cC\xrightarrow{i_{\cC}}\Cyl(\cC))\]
			where $\Cyl(\cC)$ is defined as in Definition \ref{dfn:cylinder-pre-dg} (along with the functors $i_{\cC}$ and $p_{\cC}$ given in Theorem \ref{thm:cylinder-object}).
			
			\item $Q$ sends a morphism of the form
			\[\begin{tikzcd}
				X\ar[d,"F",blue]\\
				X'
			\end{tikzcd}
			\quad:=\quad
			\begin{tikzcd}
				\cA\amalg \cB\ar[d,"F_{\cA}\amalg F_{\cB}",blue] & \cC\amalg \cC\ar[l,"\alpha\amalg\beta"']\ar[r,"\nabla_{\cC}"]\ar[d,"F_{\cC}\amalg F_{\cC}",blue] & \cC\ar[d,"F_{\cC}",blue]\\
				\cA'\amalg \cB' & \cC'\amalg \cC'\ar[l,"\alpha'\amalg\beta'"']\ar[r,"\nabla_{\cC'}"] & \cC'
			\end{tikzcd}\]
			where $F_{\cA},F_{\cB},F_{\cC}$ are dg functors, to the morphism
			\[\begin{tikzcd}
				Q(X)\ar[d,"Q(F)",blue]\\
				Q(X')
			\end{tikzcd}
			\quad:=\quad
			\begin{tikzcd}
				\cA\amalg \cB\ar[d,"F_{\cA}\amalg F_{\cB}",blue] & \cC\amalg \cC\ar[l,"\alpha\amalg\beta"']\ar[r,"i_{\cC}"]\ar[d,"F_{\cC}\amalg F_{\cC}",blue] & \Cyl(\cC)\ar[d,"\Cyl(F_{\cC})",blue]\\
				\cA'\amalg \cB' & \cC'\amalg \cC'\ar[l,"\alpha'\amalg\beta'"']\ar[r,"i_{\cC'}"] & \Cyl(\cC')
			\end{tikzcd}\]
			where $\Cyl(F_{\cC})$ is defined as in Theorem \ref{thm:cylinder-morphism}.
		\end{enumerate}
	\end{lem}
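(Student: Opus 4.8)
The plan is to verify that $Q$ as described is a well-defined functor landing in $\pi(\dgCat^J)_c$ and that it is a cofibrant resolution functor in the sense of Definition \ref{dfn:cof-res-functor}; almost all the ingredients are already in place, so the argument is mostly an assembly.

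First I would check that $Q(X)$ is a cofibrant object of $\dgCat^J$ with the Reedy model structure. By Proposition \ref{prp:reedy}, this amounts to showing that $\cA\amalg\cB$, $\cC\amalg\cC$, and $\Cyl(\cC)$ are cofibrant in $\dgCat$ and that $i_{\cC}\colon\cC\amalg\cC\to\Cyl(\cC)$ is a cofibration. Since $\cA,\cB,\cC$ are semifree, so are $\cA\amalg\cB$ and $\cC\amalg\cC$ (a coproduct of semifree dg categories is semifree), hence cofibrant by Theorem \ref{thm:dgcat-model}; and $\Cyl(\cC)$ is a semifree extension of $\cC\amalg\cC$ by Definition \ref{dfn:cylinder-pre-dg}, so it is semifree and $i_{\cC}$ is a cofibration by Theorem \ref{thm:cylinder-object}. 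This establishes $Q(X)\in(\dgCat^J)_c$.

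Next I would produce the acyclic fibration $p_X\colon Q(X)\to X$ required by Definition \ref{dfn:cof-res-functor}: take the objectwise map $(1_{\cA\amalg\cB},1_{\cC\amalg\cC},p_{\cC})$, where $p_{\cC}\colon\Cyl(\cC)\to\cC$ is from Theorem \ref{thm:cylinder-object}. This is a morphism of $J$-diagrams because $p_{\cC}\circ i_{\cC}=\nabla_{\cC}$, and it is an objectwise weak equivalence and objectwise fibration (identities plus the acyclic fibration $p_{\cC}$), hence an acyclic fibration in the Reedy structure — in fact the $J$-shape makes objectwise (acyclic) fibrations coincide with Reedy (acyclic) fibrations for the two legs, and the relevant matching-object condition at $c$ is again governed by $p_{\cC}$. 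Then I would check that $Q(F)$ as defined makes the square of Proposition \ref{prp:cofibrant-resolution} commute: on the $\cA\amalg\cB$ and $\cC\amalg\cC$ legs this is the identity, and on the $\cC$ leg it is the lower square of diagram \eqref{eq:cylinder-morphism-diagram} in Theorem \ref{thm:cylinder-morphism}, namely $F_{\cC}\circ p_{\cC}=p_{\cC'}\circ\Cyl(F_{\cC})$. Functoriality of $Q$ — that $Q(1_X)=1_{Q(X)}$ and $Q(F'\circ F)=Q(F')\circ Q(F)$ on the nose, not merely up to right homotopy — follows from Corollary \ref{cor:cyl-functor}, which asserts $\Cyl$ is a genuine functor ($\Cyl(1_{\cC})=1_{\Cyl(\cC)}$, $\Cyl(G\circ F)=\Cyl(G)\circ\Cyl(F)$); composing this with the identity maps on the other two legs gives strict functoriality, so in particular $Q$ descends to $\pi(\dgCat^J)_c$ with $Q(F)=[Q(F)]$ well defined.

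The one genuinely nontrivial point — and the main obstacle — is the uniqueness-up-to-right-homotopy clause of Proposition \ref{prp:cofibrant-resolution}: one must confirm that the strictly functorial lift $Q(F)$ we wrote down is \emph{a} valid choice of the lift $\widetilde F$ guaranteed abstractly by the model-category axioms, so that our explicit $Q$ agrees (as a functor to $\pi(\dgCat^J)_c$) with the abstract cofibrant resolution functor of Definition \ref{dfn:cof-res-functor}. This is exactly the commuting-square check of the previous paragraph together with the observation that any two such lifts are right homotopic, which is the content of Proposition \ref{prp:cofibrant-resolution}; so no new homotopy needs to be constructed by hand. I would close by remarking that the same verification goes through verbatim for the quasi-equiconic and Morita model structures, since $\Cyl(\cC)$, $i_{\cC}$, and $p_{\cC}$ have the required properties in all three (cofibrations are the same, and $p_{\cC}$ is a quasi-equivalence, hence also a pretriangulated and Morita equivalence).
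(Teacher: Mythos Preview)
Your proposal is correct and follows essentially the same approach as the paper: verify that $Q(X)$ is Reedy cofibrant via Proposition~\ref{prp:reedy} and Theorem~\ref{thm:cylinder-object}, exhibit the acyclic Reedy fibration $p_X=(1,1,p_{\cC})$ using $p_{\cC}\circ i_{\cC}=\nabla_{\cC}$, check the lifting square commutes via the diagram~\eqref{eq:cylinder-morphism-diagram} of Theorem~\ref{thm:cylinder-morphism}, and conclude by Proposition~\ref{prp:cofibrant-resolution} and Definition~\ref{dfn:cof-res-functor}. The only point you leave slightly implicit is that $Q(F)$ is itself a morphism of $J$-diagrams (the right-hand square $\Cyl(F_{\cC})\circ i_{\cC}=i_{\cC'}\circ(F_{\cC}\amalg F_{\cC})$), which is the \emph{upper} square of~\eqref{eq:cylinder-morphism-diagram}; the paper notes this explicitly.
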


	\begin{proof}
		For every object $X$ given in the lemma, $Q(X)$ is a cofibrant object by Proposition \ref{prp:reedy} and Theorem \ref{thm:cylinder-object}. Moreover, consider the morphism $p_X\colon Q(X)\to X$ given by
		\[\begin{tikzcd}
			Q(X)\ar[d,"p_X",blue]\\
			X
		\end{tikzcd}
		\quad:=\quad
		\begin{tikzcd}
			\cA\amalg \cB\ar[d,"1_{\cA}\amalg 1_{\cB}",blue] & \cC\amalg \cC\ar[l,"\alpha\amalg\beta"']\ar[r,"i_{\cC}"]\ar[d,"1_{\cC}\amalg 1_{\cC}",blue] & \Cyl(\cC)\ar[d,"p_{\cC}",blue]\\
			\cA\amalg \cB & \cC\amalg \cC\ar[l,"\alpha\amalg\beta"']\ar[r,"\nabla_{\cC}"] & \cC
		\end{tikzcd}\]
		which is indeed a morphism and an acyclic (Reedy) fibration (see \cite{hirschhorn}) since $p_{\cC}\circ i_{\cC}=\nabla_{\cC}$ and $p_{\cC}$ is an acyclic fibration by Theorem \ref{thm:cylinder-object}. Then, the diagram
		\[\begin{tikzcd}
			Q(X)\ar[r,"Q(F)"]\ar[d,"p_X"] & Q(X')\ar[d,"p_{X'}"]\\
			X\ar[r,"F"] & X'
		\end{tikzcd}\]
		commutes for every morphism $F\colon X\to X'$ given in the lemma since the diagram (\ref{eq:cylinder-morphism-diagram}) commutes by Theorem \ref{thm:cylinder-morphism}. Note that $Q(F)$ is indeed a morphisms also because the diagram (\ref{eq:cylinder-morphism-diagram}) commutes. Therefore, by Proposition \ref{prp:cofibrant-resolution} and Definition \ref{dfn:cof-res-functor}, there is a cofibration functor $Q$ with the properties given in the lemma.
	\end{proof}

	\begin{rmk}\label{rmk:cof-res-localisation}
		Assume that in Lemma \ref{lem:cofibrant-resolution-dg}, we replace $\cC$ and $\cC'$ by $\cC[S_{\cC}^{-1}]$ and $\cC'[S_{\cC'}^{-1}]$ for some subsets of degree zero closed morphisms $S_{\cC}$ and $S_{\cC'}$ in $\cC$ and $\cC'$, respectively. Then, instead of using $\Cyl(F_{\cC[S_{\cC}^{-1}]})\colon \Cyl\left(\cC[S_{\cC}^{-1}]\right)\to\Cyl(\cC'[S_{\cC'}^{-1}])$ when defining $Q$, we can use the dg functor
		\[\Cyl(\cC)[(i_1(S_{\cC})\sqcup i_2(S_{\cC}))^{-1}]\to\Cyl(\cC')[(i_1(S_{\cC'})\sqcup i_2(S_{\cC'}))^{-1}]\]
		given in Theorem \ref{thm:cylinder-morphism-loc}, which gives a simpler cofibrant resolution functor $Q$. The proof will be the same as the proof of Lemma \ref{lem:cofibrant-resolution-dg} after replacing Theorem \ref{thm:cylinder-morphism} with Theorem \ref{thm:cylinder-morphism-loc} in the proof.
	\end{rmk}

	Finally, we can state the formula for the homotopy colimit functor on the diagrams of semifree dg categories:
	
	\begin{thm}\label{thm:hocolim-functor-dg}
		The homotopy colimit functor $\hocolim\colon\Ho(\dgCat^J)\to\Ho(\dgCat)$ (up to natural equivalence) satisfies the following:
		\begin{enumerate}
			\item \label{item:hocolim-obj} $\hocolim$ sends an object of the form
			\[X:=(\cA\xleftarrow{\alpha}\cC\xrightarrow{\beta}\cB)\]
			where $\cA,\cB,\cC$ are semifree dg categories and $\alpha,\beta$ are dg functors, to the object (semifree dg category)
			\[\hocolim(X)\]
			which is the semifree extension of $\cA\amalg\cB$ by
			\begin{itemize}
				\item closed degree zero morphism $t_C\colon \alpha(C)\to\beta(C)$ for each object $C\in\cC$,
				
				\item morphisms $t_C',\hat t_C,\check t_C,\bar t_C$ for each object $C\in\cC$ as in Definition \ref{dfn:cylinder-pre-dg} (after replacing $i_1$ with $\alpha$ and $i_2$ with $\beta$),
				
				\item degree $|f|-1$ morphism $t_f\colon\alpha(A)\to\beta(B)$ for each generating morphism $f\colon A\to B$ in $\cC$ whose differential is given as in Definition \ref{dfn:cylinder-pre-dg} (after replacing $i_1$ with $\alpha$ and $i_2$ with $\beta$).
			\end{itemize}
			
			\item \label{item:hocolim-mor} $\hocolim$ sends a morphism of the form
			\[\begin{tikzcd}
				X\ar[d,"F",blue]\\
				X'
			\end{tikzcd}
			\quad:=\quad
			\begin{tikzcd}
				\cA\ar[d,"F_{\cA}",blue] & \cC\ar[l,"\alpha"']\ar[r,"\beta"]\ar[d,"F_{\cC}",blue] & \cB\ar[d,"F_{\cB}",blue]\\
				\cA' & \cC'\ar[l,"\alpha'"']\ar[r,"\beta'"] & \cB'
			\end{tikzcd}\]
			where $F_{\cA},F_{\cB},F_{\cC}$ are dg functors, to the morphism (dg functor) 
			\begin{align*}
				\hocolim(F)\colon\hocolim(X)&\to\hocolim(X')\\
				A&\mapsto F_{\cA}(A) &&\text{for any object $A\in\cA$}\\
				B&\mapsto F_{\cB}(B) &&\text{for any object $B\in\cB$}\\
				a&\mapsto F_{\cA}(a) &&\text{for any morphism $a$ in $\cA$}\\
				b&\mapsto F_{\cB}(b) &&\text{for any morphism $b$ in $\cB$}\\
				t_C,t_C',\hat t_C,\check t_C,\bar t_C&\mapsto t_{F_{\cC}(C)},t_{F_{\cC}(C)}',\hat t_{F_{\cC}(C)},\check t_{F_{\cC}(C)},\bar t_{F_{\cC}(C)} &&\text{respectively, for any object $C\in\cC$}\\
				t_f &\mapsto t_{F_{\cC}(f)} &&\text{for any generating morphism $f$ in $\cC$,}
			\end{align*}
			where for any generating morphism $f\colon A\to B$ in $\cC$, the degree $|f|-1$ morphism $t_{F_{\cC}(f)}$ is defined as
			\[t_{F_{\cC}(f)}:=\sum_{i=1}^m c_i \sum_{j=1}^{n_i}(-1)^{|f_{i,j-1}|+\ldots+|f_{i,1}|} \beta'(f_{i,n_i})\circ\ldots\circ \beta'(f_{i,j+1})\circ t_{f_{i,j}}\circ \alpha'(f_{i,j-1})\circ\ldots\circ \alpha'(f_{i,1})\]
			if $F_{\cC}(f)$ is given by
			\[F_{\cC}(f)=c 1_{F_{\cC}(A)} + \sum_{i=1}^m c_i f_{i,n_i}\circ\ldots\circ f_{i,j}\circ\ldots\circ f_{i,1}\in\hom_{\cC'}^*(F_{\cC}(A),F_{\cC}(B))\]
			for some generating morphisms $f_{i,j}$ of $\cC'$ and $c, c_i\in k$ ($c=0$ if $F_{\cC}(A)\neq F_{\cC}(B)$).
		\end{enumerate}
	\end{thm}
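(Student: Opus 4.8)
The plan is to derive Theorem~\ref{thm:hocolim-functor-dg} by tracing the chain of functors $(\dgCat_s)^J \xrightarrow{j} \dgCat^J \xrightarrow{T} \dgCat^J \xrightarrow{Q} \pi(\dgCat^J)_c \xrightarrow{l\circ\colim} \Ho(\dgCat)$ and computing its effect on objects and morphisms explicitly, using the cofibrant resolution functor $Q$ constructed in Lemma~\ref{lem:cofibrant-resolution-dg}. By Corollary~\ref{cor:dwyer-hocolim}, the homotopy colimit functor is (up to natural equivalence) the unique lift of $l\circ\colim\circ Q\circ T$ along the localization $l\colon \dgCat^J \to \Ho(\dgCat^J)$; since every object of $\dgCat^J$ is weakly equivalent to one in the image of $j$ (because every dg category has a semifree resolution, as recalled in the introduction), it suffices to evaluate this composite on $(\dgCat_s)^J$. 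So the proof is really just ``unwind the definitions and compute the colimit.''

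First I would take an object $X = (\cA \xleftarrow{\alpha} \cC \xrightarrow{\beta} \cB)$ with $\cA,\cB,\cC$ semifree. Applying $T$ gives $T(X) = (\cA\amalg\cB \xleftarrow{\alpha\amalg\beta} \cC\amalg\cC \xrightarrow{\nabla_\cC} \cC)$, and applying $Q$ (as in Lemma~\ref{lem:cofibrant-resolution-dg}(1)) replaces $\nabla_\cC$ by $i_\cC\colon \cC\amalg\cC \to \Cyl(\cC)$, yielding $Q(T(X)) = (\cA\amalg\cB \xleftarrow{\alpha\amalg\beta} \cC\amalg\cC \xrightarrow{i_\cC} \Cyl(\cC))$. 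Then I compute the colimit (pushout) of this span using Proposition~\ref{prp:colimit-dg}: since $i_\cC\colon \cC\amalg\cC \to \Cyl(\cC)$ is a semifree extension by the morphisms $t_C, t_C', \hat t_C, \check t_C, \bar t_C$ (for $C\in\cC$) and $t_f$ (for generating $f$ in $\cC$), as spelled out in Definition~\ref{dfn:cylinder-pre-dg}, the pushout along $\alpha\amalg\beta\colon \cC\amalg\cC \to \cA\amalg\cB$ is the semifree extension of $\cA\amalg\cB$ by the ``pushed-forward'' morphisms $\bar t_C$ etc., which by the formula for $\bar G$ in Proposition~\ref{prp:colimit-dg} means replacing $i_1$ by $\alpha$ and $i_2$ by $\beta$ throughout the degrees and differentials of Definition~\ref{dfn:cylinder-pre-dg}. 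This is exactly the description of $\hocolim(X)$ in item~\eqref{item:hocolim-obj}. For the morphism statement~\eqref{item:hocolim-mor}, I apply the same pipeline to a morphism $F\colon X\to X'$: $T(F)$ is as in Proposition~\ref{prp:diagram-diagonal}, $Q(T(F))$ replaces $\nabla$ by $i$ and the vertical arrow $F_\cC\amalg F_\cC$ over $\cC$ by $\Cyl(F_\cC)\colon \Cyl(\cC)\to\Cyl(\cC')$ as in Theorem~\ref{thm:cylinder-morphism}; then $\colim$ of this morphism of spans is computed by functoriality of the pushout in Proposition~\ref{prp:colimit-dg}. Tracking where each generator goes --- objects of $\cA,\cB$ via $F_\cA, F_\cB$, and $t_C, t_f$ via $t_{F_\cC(C)}, t_{F_\cC(f)}$ where $t_{F_\cC(f)}$ is given by Definition~\ref{dfn:generalised-t-morphisms} (with $i_1,i_2$ replaced by $\alpha',\beta'$) --- produces precisely the stated formula, once one notes that $\Cyl(F_\cC)(t_f) = t_{F_\cC(f)}$ by Theorem~\ref{thm:cylinder-morphism}(2) and that the colimit identifies $i_1$ with $\alpha$ and $i_2$ with $\beta$.

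The main obstacle, such as it is, is bookkeeping rather than conceptual: one must carefully check that the identifications induced by the pushout (in Proposition~\ref{prp:colimit-dg}, via the map $\bar G$) correctly transform the formulas of Definition~\ref{dfn:cylinder-pre-dg} and Definition~\ref{dfn:generalised-t-morphisms} --- with their sign conventions and correction terms --- into the claimed formulas with $i_1\rightsquigarrow\alpha$, $i_2\rightsquigarrow\beta$, and that this transformation is compatible with differentials (so that the output is genuinely a semifree dg category and $\hocolim(F)$ a dg functor). The well-definedness of $Q$ up to right homotopy is already handled by Lemma~\ref{lem:cofibrant-resolution-dg}, and the fact that $l\circ\colim$ descends to $\pi(\dgCat^J)_c$ is Remark~\ref{rmk:homotopy-lift}(1); so no new homotopical input is needed. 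I would also remark that the colimit computed here is the honest (strict) colimit in $\dgCat$, which is what $\colim\colon \dgCat^J\to\dgCat$ means, so Proposition~\ref{prp:colimit-dg} applies directly. The uniqueness and existence of the lift, hence the ``up to natural equivalence'' clause, follows verbatim from Corollary~\ref{cor:dwyer-hocolim} together with the observation that $l\circ\colim\circ Q\circ T$ sends objectwise weak equivalences to isomorphisms.
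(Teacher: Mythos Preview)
Your proposal is correct and follows essentially the same approach as the paper: both apply Corollary~\ref{cor:dwyer-hocolim} to write $\hocolim=l\circ\colim\circ Q\circ T$, use the explicit $Q$ from Lemma~\ref{lem:cofibrant-resolution-dg}, and then compute the pushout via Proposition~\ref{prp:colimit-dg}, with the morphism part handled by $\Cyl(F_{\cC})$ from Theorem~\ref{thm:cylinder-morphism}. In fact you have spelled out the bookkeeping (the $i_1\rightsquigarrow\alpha$, $i_2\rightsquigarrow\beta$ substitution and the use of Theorem~\ref{thm:cylinder-morphism}(2)) in more detail than the paper's own proof, which simply declares these verifications ``straightforward.''
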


	\begin{proof}
		First, we note that the description of $\hocolim(X)$ is given in \cite{hocolim}. We can also see it here as follows: By Corollary \ref{cor:dwyer-hocolim}, we have
		\[\hocolim(X)=l\circ\colim\circ Q\circ T(X)\]
		where $T$ is a functor as in Proposition \ref{prp:diagram-diagonal}, $Q$ is the cofibration functor given in Lemma \ref{lem:cofibrant-resolution-dg}. Using the description of $\colim$ given in Proposition \ref{prp:colimit-dg}, it is straightforward to check that $\hocolim(X)$ is as described in the first item.
		
		For the second item (which does not appear in \cite{hocolim}), by Corollary \ref{cor:dwyer-hocolim} again, we have
		\[\hocolim(F)=l\circ\colim\circ Q\circ T(F) .\]
		Then, considering that $\hocolim(X)$ is the semifree extension of $\cA\amalg\cB$ by the morphims given in the theorem, we see that $\hocolim(F)\colon\hocolim(X)\to\hocolim(X')$ acts like $F_{\cA}$ on $\cA$, $F_{\cB}$ on $\cB$, and $\Cyl(F_{\cC})$ (which is given in Theorem \ref{thm:cylinder-morphism}) on the added morphisms.
	\end{proof}
	
	\begin{rmk}
		In Theorem \ref{thm:hocolim-functor-dg}, we could have expressed $\hocolim(X)$ as the semifree dg category obtained by first taking the semifree extension of $\cA\amalg\cB$ by the morphisms $t_C$ and $t_f$ for each $C\in\cC$ and for each generating morphism $f$ in $\cC$, and then taking the dg localization of the resulting category at the morphisms $\{t_C\vb C\in\cC\}$ as in Remark \ref{rmk:cylinder-dg-loc}. Hence, up to natural equivalence, the images of the morphisms $t'_C,\hat t_C,\check t_C,\bar t_C$ under the homotopy colimit functor are uniquely determined for every $C\in\cC$.
	\end{rmk}

	\begin{rmk}
		Theorem \ref{thm:hocolim-functor-dg} implies that for any morphism $\theta$ in $\cC$, we have
		\[\hocolim(F)(t_{\theta})=t_{F_{\cC}(\theta)} .\]
		This follows from the identity (\ref{eq:cyl-theta}).
	\end{rmk}
	
	\begin{rmk}\label{rmk:hocolim-dg-algebras}
		Similar to Remark \ref{rmk:cylinder-functor-dg-algebras}, we can modify Theorem \ref{thm:hocolim-functor-dg} to describe the homotopy colimit functor $\Ho(\dgAlg^J)\to\Ho(\dgAlg)$ for the model category of dg algebras $\dgAlg$. All the formulas still hold by setting
		\[t_C=1,\quad t_C'=1,\quad \hat t_C=0,\quad\check t_C=0,\quad \bar t_C=0.\]
	\end{rmk}

	Considering Theorem \ref{thm:cylinder-morphism-loc} and Remark \ref{rmk:cof-res-localisation}, we have the following version of Theorem \ref{thm:hocolim-functor-dg} when $\cC$ is given as a dg localization:
	
	\begin{thm}\label{thm:hocolim-functor-dg-loc}
		The homotopy colimit functor $\hocolim\colon\Ho(\dgCat^J)\to\Ho(\dgCat)$ (up to natural equivalence) satisfies the following:
		\begin{enumerate}
			\item \label{item:hocolim-loc-obj} $\hocolim$ sends an object of the form
			\[X:=(\cA\xleftarrow{\alpha}\cC[S_{\cC}^{-1}]\xrightarrow{\beta}\cB)\]
			where $\cA,\cB,\cC$ are semifree dg categories, $S_{\cC}$ is a subset of degree zero morphisms in $\cC$, and $\alpha,\beta$ are dg functors, to the object (semifree dg category)
			\[\hocolim(X)=\hocolim(\cA\xleftarrow{\alpha}\cC\xrightarrow{\beta}\cB)\]
			which is given by Theorem \ref{thm:hocolim-functor-dg}.
			
			\item \label{item:hocolim-loc-mor} $\hocolim$ sends a morphism of the form
			\[\begin{tikzcd}
				X\ar[d,"F",blue]\\
				X'
			\end{tikzcd}
			\quad:=\quad
			\begin{tikzcd}
				\cA\ar[d,"F_{\cA}",blue] & \cC[S_{\cC}^{-1}]\ar[l,"\alpha"']\ar[r,"\beta"]\ar[d,"F_{\cC}",blue] & \cB\ar[d,"F_{\cB}",blue]\\
				\cA' & \cC'[S_{\cC'}^{-1}]\ar[l,"\alpha'"']\ar[r,"\beta'"] & \cB'
			\end{tikzcd}\]
			where for any generating morphism $f\colon A\to B$ in $\cC$, the degree $|f|-1$ morphism $t_{F_{\cC}(f)}$ is defined as
			\[t_{F_{\cC}(f)}:=\sum_{i=1}^m c_i \sum_{j=1}^{n_i}(-1)^{|f_{i,j-1}|+\ldots+|f_{i,1}|} \beta'(f_{i,n_i})\circ\ldots\circ \beta'(f_{i,j+1})\circ t_{f_{i,j}}\circ \alpha'(f_{i,j-1})\circ\ldots\circ \alpha'(f_{i,1})\]
			if $F_{\cC}(f)$ is given by
			\[F_{\cC}(f)=c 1_{F_{\cC}(A)} + \sum_{i=1}^m c_i f_{i,n_i}\circ\ldots\circ f_{i,j}\circ\ldots\circ f_{i,1}\in\hom_{\cC'[S_{\cC'}^{-1}]}^*(F_{\cC}(A),F_{\cC}(B))\]
			for some generating morphisms $f_{i,j}$ of $\cC'[S_{\cC'}^{-1}]$ and $c, c_i\in k$ ($c=0$ if $F_{\cC}(A)\neq F_{\cC}(B)$). The generating morphisms of $\cC'[S_{\cC'}^{-1}]$ are given by the generating morphisms of $\cC'$ and the morphisms $g',\hat g,\check g, \bar g$ for every morphism $g$ in $S_{\cC'}$ as in Proposition \ref{prp:localise-semifree}. The morphisms
			\[t_{g'},t_{\hat g},t_{\check g},t_{\bar g}\]
			are given in terms of $t_g$ as in \eqref{eq:t-theta-loc} and $t_g$ is given as in Definition \ref{dfn:generalised-t-morphisms} (after replacing $i_1$ with $\alpha'$ and $i_2$ with $\beta'$).
		\end{enumerate}
	\end{thm}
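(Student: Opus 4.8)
The plan is to deduce Theorem \ref{thm:hocolim-functor-dg-loc} from Theorem \ref{thm:hocolim-functor-dg} in exactly the same way that Theorem \ref{thm:cylinder-morphism-loc} was deduced from Theorem \ref{thm:cylinder-morphism}, packaging the ``simpler cylinder object'' of a dg localization (Theorem \ref{thm:cylinder-loc}) into the cofibrant resolution functor $Q$ via Remark \ref{rmk:cof-res-localisation}. For item \eqref{item:hocolim-loc-obj}: since $\cC[S_{\cC}^{-1}]$ is itself a semifree dg category (Proposition \ref{prp:localise-semifree}), Theorem \ref{thm:hocolim-functor-dg} applies verbatim to the diagram $\cA\xleftarrow{\alpha}\cC[S_{\cC}^{-1}]\xrightarrow{\beta}\cB$, and I would only need to observe that, when $Q$ is chosen as in Remark \ref{rmk:cof-res-localisation} using $\Cyl(\cC)[(i_1(S_{\cC})\sqcup i_2(S_{\cC}))^{-1}]$ rather than $\Cyl(\cC[S_{\cC}^{-1}])$, the colimit $l\circ\colim\circ Q\circ T(X)$ computed via Proposition \ref{prp:colimit-dg} (gluing the two copies of $\cC[S_{\cC}^{-1}]$ inside the simpler cylinder along $\alpha$ and $\beta$) is exactly $\hocolim(\cA\xleftarrow{\alpha}\cC\xrightarrow{\beta}\cB)$. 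The point is that $\Cyl(\cC)[(i_1(S_{\cC})\sqcup i_2(S_{\cC}))^{-1}]$ is the semifree extension of $\cC[S_{\cC}^{-1}]\amalg\cC[S_{\cC}^{-1}]$ by the morphisms $t_C,t_C',\hat t_C,\check t_C,\bar t_C$ ($C\in\cC$) and $t_f$ ($f$ a generating morphism of $\cC$) --- no generators $t_{g'},t_{\hat g},t_{\check g},t_{\bar g}$ are needed --- as was already recorded in the discussion preceding Theorem \ref{thm:cylinder-loc}; pushing this out along $\alpha\amalg\beta$ collapses $i_1\mapsto\alpha$, $i_2\mapsto\beta$ and identifies the result with the semifree dg category of Theorem \ref{thm:hocolim-functor-dg}\eqref{item:hocolim-obj} applied to $\cC$.

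For item \eqref{item:hocolim-loc-mor}, by Corollary \ref{cor:dwyer-hocolim} we again have $\hocolim(F)=l\circ\colim\circ Q\circ T(F)$, where now $Q$ is the cofibrant resolution functor of Remark \ref{rmk:cof-res-localisation}, so that the added morphisms of $Q(X)$ are acted on by the dg functor $\Cyl(F)\colon\Cyl(\cC)[(i_1(S_{\cC})\sqcup i_2(S_{\cC}))^{-1}]\to\Cyl(\cD)[(i_1(S_{\cD})\sqcup i_2(S_{\cD}))^{-1}]$ of Theorem \ref{thm:cylinder-morphism-loc}. Reading off the effect of $\colim$ using Proposition \ref{prp:colimit-dg} (under the identifications $i_1\mapsto\alpha'$, $i_2\mapsto\beta'$ on the target), I obtain that $\hocolim(F)$ acts as $F_{\cA}$ on $\cA$, as $F_{\cB}$ on $\cB$, sends $t_C,t_C',\hat t_C,\check t_C,\bar t_C$ to $t_{F_{\cC}(C)}$ and its companions, sends $t_f$ to $t_{F_{\cC}(f)}$ --- with $t_{F_{\cC}(f)}$ given by formula \eqref{eq:theta-equation}/Definition \ref{dfn:generalised-t-morphisms} with $i_1,i_2$ replaced by $\alpha',\beta'$, which is precisely the displayed formula in the statement --- and sends the images of $t_{g'},t_{\hat g},t_{\check g},t_{\bar g}$ for $g\in S_{\cC}$ to the morphisms prescribed by \eqref{eq:t-theta-loc} (again with $i_1,i_2\rightsquigarrow\alpha',\beta'$), by clause (1) of Theorem \ref{thm:cylinder-morphism-loc}. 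The identity $\Cyl(F)(t_\theta)=t_{F(\theta)}$ of Theorem \ref{thm:cylinder-morphism-loc}(2) guarantees well-definedness and functoriality of the assignment, exactly as \eqref{eq:cyl-theta} did in the proof of Theorem \ref{thm:hocolim-functor-dg}.

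The only genuinely nontrivial inputs --- namely that $i_{\cC[S_{\cC}^{-1}]}$ into the simpler cylinder is a cofibration and $p_{\cC[S_{\cC}^{-1}]}$ an acyclic fibration, and that $\Cyl(F)$ as in \eqref{eq:t-theta-loc} is a well-defined dg functor making the square of Theorem \ref{thm:cylinder-morphism-loc}(3) commute --- have already been established in Theorems \ref{thm:cylinder-loc} and \ref{thm:cylinder-morphism-loc}, so the present proof is essentially bookkeeping: one invokes Remark \ref{rmk:cof-res-localisation} to know $Q$ is a legitimate cofibrant resolution functor on the image of $T\circ j$, then applies Corollary \ref{cor:dwyer-hocolim} and unwinds $\colim$ via Proposition \ref{prp:colimit-dg}. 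I expect the main point requiring care --- though not difficulty --- to be checking that replacing $\Cyl(\cC[S_{\cC}^{-1}])$ by the smaller model $\Cyl(\cC)[(i_1(S_{\cC})\sqcup i_2(S_{\cC}))^{-1}]$ does not change $\hocolim$ up to natural equivalence; this is because both are cylinder objects for $\cC[S_{\cC}^{-1}]$ (Theorems \ref{thm:cylinder-object} and \ref{thm:cylinder-loc}) and hence yield naturally equivalent cofibrant resolution functors, so by the uniqueness in Corollary \ref{cor:dwyer-hocolim} the resulting homotopy colimit functors agree. Accordingly I would keep the write-up short, citing Theorem \ref{thm:hocolim-functor-dg}, Theorems \ref{thm:cylinder-loc} and \ref{thm:cylinder-morphism-loc}, Remark \ref{rmk:cof-res-localisation}, and Corollary \ref{cor:dwyer-hocolim}, and leaving the routine verification of \eqref{eq:theta-differential} for the generators $t_{g'},t_{\hat g},t_{\check g},t_{\bar g}$ to Theorem \ref{thm:cylinder-morphism-loc}, where it was already delegated.
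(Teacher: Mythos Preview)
Your proposal is correct and follows essentially the same approach as the paper: replace the cofibrant resolution functor $Q$ in the proof of Theorem~\ref{thm:hocolim-functor-dg} by the one described in Remark~\ref{rmk:cof-res-localisation} (built from the simpler cylinder of Theorem~\ref{thm:cylinder-loc} and the functor $\Cyl(F)$ of Theorem~\ref{thm:cylinder-morphism-loc}), then apply Corollary~\ref{cor:dwyer-hocolim} and unwind via Proposition~\ref{prp:colimit-dg}. The paper's own proof is in fact terser than yours, simply pointing to these replacements and to \cite{hocolim} for item~\eqref{item:hocolim-loc-obj}.
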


	\begin{proof}
		The first item appears in \cite{hocolim}. We can also see it here as in the proof of Theorem \ref{thm:hocolim-functor-dg} after replacing the cofibrant resolution functor $Q$ with the one given in Remark \ref{rmk:cof-res-localisation}.
		
		The second item (which does not appear in \cite{hocolim}) can be again shown as in the proof of Theorem \ref{thm:hocolim-functor-dg} after replacing the cofibrant resolution functor $Q$ with the one given in Remark \ref{rmk:cof-res-localisation}. In particular, we replace the functor $\Cyl(F)$ given in Theorem \ref{thm:cylinder-morphism} to define $Q$ with the one given in Theorem \ref{thm:cylinder-morphism-loc}.
	\end{proof}
	
	\begin{rmk}\label{rmk:hocolim-non-semifree}
		According to \cite{holstein-properness}, if the coefficient ring $k$ has flat dimension 0 (e.g., $k$ is a field), then the model category $\dgCat$ with weak equivalences being quasi-equivalences, pretriangulated equivalences, or Morita equivalences is left proper. This implies that the formulas in Theorems \ref{thm:hocolim-functor-dg} and \ref{thm:hocolim-functor-dg-loc} still hold if we choose arbitrary dg categories $\cA,\cA',\cB,\cB'$ instead of semifree ones, given that $k$ has flat dimension 0.
	\end{rmk}

	In Section \ref{sec:sphere-reflection}, we will demonstrate a simple application of Theorem \ref{thm:hocolim-functor-dg} and Theorem \ref{thm:hocolim-functor-dg-loc} to symplectic geometry.

\section{I-category and cofibration category of semifree dg categories}\label{sec:cofibration-category}

The purpose of this section is to elaborate on the construction of the cylinder functor introduced in Section \ref{sec:cylinder-functor} to approach the homotopy theory of dg categories up to quasi-equivalence. This approach is independent of the celebrated result by Tabuada (Theorem \ref{thm:dgcat-model}\eqref{item:dwyer-kan-model}). The advantage of our approach is that the simplicity of our cylinder functor makes many constructions explicit and computable. It is worth noting that readers can skip to Section \ref{sec:sphere-reflection} without losing continuity in the paper.

We will define an $I$-category structure on the category of semifree dg categories in Theorem \ref{thm:dgcat-I-category} by introducing cofibrations and a natural cylinder functor. This structure enables us to establish a cofibration category of semifree dg categories in Theorem \ref{thm:dgcat-cofibration-category}, following the framework outlined in \cite{baues}. Conceptually, this category corresponds to half of a model category. Alternatively, it can be regarded as a category of cofibrant objects first introduced by Brown in \cite{brown-homotopy} and further studied, for example, in \cite{kamps-porter}.

These frameworks enable the construction of various homotopical operations, such as homotopy colimits (Remark \ref{rmk:hocolim-from-cofibration}), without relying on the existence of a model structure. Thus, it provides an alternative pathway to achieve the most of the results outlined in the preceding sections without relying on \cite{tabuada-model}. We will also present an explicit functorial factorization of dg functors into a cofibration and a quasi-equivalence in Theorem \ref{thm:mapping-cylinder-factorisation}.

We recall that $k$ is a commutative ring, and $\dgCat_s$ represents the category of (small) $k$-linear semifree dg categories, with morphisms defined as dg functors. The definitions of semifree dg categories and semifree extensions can be revisited from Definition \ref{dfn:semifree}. Note that every dg category is quasi-equivalent to a semifree one, thus justifying our focus on $\sCat$.

We now introduce the definition of an $I$-category as outlined in \cite{baues}. Then, we will show how the category of semifree dg categories can be made an $I$-category:

\begin{dfn}\label{dfn:i-category}
	An {\em $I$-category} is a category $\cM$ with an initial object $\emptyset$ and equipped with a structure $(\mathrm{cof}, I)$. The term $\mathrm{cof}$ represents a class of morphisms in $\cM$ identified as cofibrations, and $I$ is a functor from $\cM$ to itself. The structure satisfies the following set of axioms:
	\begin{enumerate}
		\item {\em Cylinder Axiom:} $I\colon \cM \to \cM$ is a functor together with natural transformations
		\[i_1, i_2\colon \mathrm{id}_{\cM} \Rightarrow I, \quad p\colon I \Rightarrow\mathrm{id}_{\cM},\]
		such that for all $\cA\in\cM$ and $r\in\{1,2\}$, $p\circ i_r\colon \cA\to I(\cA)\to \cA$ is the identity of $\cA$.
		
		\item\label{item:pushout-axiom} {\em Pushout Axiom:} For a cofibration $F\colon \cA \to \cB$ and a morphism $G\colon \cA\to \cC$ in $\cM$, there exists a pushout in $\cM$
		\[
		\begin{tikzcd}
			\cB\ar[r,"\bar G"] & \cB \cup_{\cA} \cC \\
			\cA\ar[u,"F"]\arrow[r, "G"] & \cC \arrow[u, "\bar F"]
		\end{tikzcd}
		\]
		where $\bar F$ is also a cofibration. Moreover, $I(\emptyset) = \emptyset$, and $I$ sends the pushout diagram into a pushout diagram, i.e., $I(\cB \cup_{\cA} \cC) = I(\cB) \cup_{I(\cA)} I(\cC)$.
		
		\item\label{item:cofibration-axiom} {\em Cofibration Axiom:} Each isomorphism in $\cM$ is a cofibration, and for each $\cA\in\cM$, the morphism $\emptyset \to \cA$ is a cofibration. The composition of cofibrations is a cofibration. Moreover, any cofibration $F\colon \cA \to \cB$ in $\cM$ has the following {\em homotopy extension property}: For each $r \in \{1,2\}$, and for each commutative diagram in $\cM$
		\[\begin{tikzcd}
			\cB \arrow[r, "G"] & \cC \\
			\cA \arrow[r, "i_r"] \arrow[u, "F"] & I(\cA)  \arrow[u, "H"]
		\end{tikzcd}\]
		there is a morphism $E\colon I(\cB) \to \cC$ with $E\circ i_r = G$ and $E\circ I(F) = H$.
		
		\item\label{item:relative-cylinder-axiom} {\em Relative Cylinder Axiom:} For a cofibration $F\colon \cA \to \cB$ in $\cM$, the morphism $G$ defined by the pushout diagram
		\[\begin{tikzcd}
			& \cB\sqcup \cB\ar[rd]\ar[rrrd,"i_1\sqcup i_2",bend left=10]\\
			\cA\sqcup \cA\ar[ru,"F\sqcup F"]\ar[rd,"i_1\sqcup i_2"'] & \mathrm{pushout} & \cB\cup_{\cA} I(\cA)\cup_{\cA} \cB\ar[rr,"G",dashed] & &I(\cB)\\
			& I(\cA)\ar[ru]\ar[rrru,"I(F)"',bend right=10]
		\end{tikzcd}\]
		is a cofibration.
		
		\item {\em Interchange Axiom:} For all $\cA\in\cM$, there exists a morphism $T\colon I(I(\cA)) \to I(I(\cA))$ such that $T\circ i_r = I(i_r)$ and $T\circ I(i_r)=i_r$ for any $r\in\{1,2\}$.
	\end{enumerate}
\end{dfn}


\begin{thm}\label{thm:dgcat-I-category}
	The category of semifree dg categories $\sCat$ is an $I$-category with the structure $(\mathrm{cof},I)$, which is defined as follows:
	\begin{itemize}
		\item $\mathrm{cof}:$ Cofibrations are semifree extensions.
		
		\item $I$: The functor $I$ is the cylinder functor $\Cyl\colon\sCat\to\sCat$ introduced in Corollary \ref{cor:cyl-functor}.
	\end{itemize}
\end{thm}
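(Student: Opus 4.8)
The plan is to check, one at a time, the five axioms of Definition~\ref{dfn:i-category} for $\cM=\sCat$, with initial object the empty dg category, cofibrations the semifree extensions of Definition~\ref{dfn:semifree}, and $I=\Cyl$; note that these axioms involve only cofibrations and the cylinder functor, not the weak equivalences, so the statement is purely structural. The \emph{Cylinder Axiom} is immediate from what is already established: take $i_1,i_2\colon\mathrm{id}_{\sCat}\Rightarrow\Cyl$ to be $i_\cC$ precomposed with the two coproduct inclusions $\cC\to\cC\amalg\cC$, and $p=p_\cC$, all as in Corollary~\ref{cor:cyl-functor} and Theorem~\ref{thm:cylinder-object}; naturality is the commuting square~\eqref{eq:cylinder-morphism-diagram}, and $p\circ i_r=\mathrm{id}_\cC$ follows from $p_\cC\circ i_\cC=\nabla_\cC$. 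For the \emph{Pushout Axiom}, the existence of the pushout of a semifree extension along an arbitrary dg functor and the fact that the pushed-out leg is again a semifree extension is exactly Proposition~\ref{prp:colimit-dg}, and $\Cyl(\emptyset)=\emptyset$ is clear since there are no objects or generators to add. The remaining clause $\Cyl(\cB\cup_\cA\cC)=\Cyl(\cB)\cup_{\Cyl(\cA)}\Cyl(\cC)$ I will check by comparing generating data: first observe that $\Cyl(F)$ is itself a semifree extension whenever $F$ is (read off from Definition~\ref{dfn:cylinder-pre-dg} and Theorem~\ref{thm:cylinder-morphism}, ordering the new generators $i_1(g),i_2(g)$ before the new $t$-morphisms and those before the new $t_g$'s), and then note that the colimit formula of Proposition~\ref{prp:colimit-dg} simply glues generating data while $\Cyl$ adds the morphisms $t_C,t'_C,\dots$ objectwise and $t_f$ one generator at a time, with $dt_f$ referring only to $t$-morphisms and doubled morphisms attached to the generators occurring in $df$; this ``locality'' is precisely what makes $\Cyl$ commute with the gluing.

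The \emph{Cofibration Axiom} has four clauses. Isomorphisms are semifree extensions by empty generating data (after the evident identification of source and target), the maps $\emptyset\to\cC$ are semifree extensions by $R=\Ob\cC$ and the generating morphisms of $\cC$ (the triangularity condition being the semifreeness of $\cC$), and a composite of semifree extensions is one, obtained by concatenating the ordinal-indexed generating sets with the first batch placed before the second. The fourth clause, the homotopy extension property, is the crux and is treated below. For the \emph{Relative Cylinder Axiom}, given a semifree extension $F\colon\cA\to\cB$ I will identify the pushout $\cB\cup_\cA\Cyl(\cA)\cup_\cA\cB$ of $\cB\sqcup\cB\xleftarrow{F\sqcup F}\cA\sqcup\cA\xrightarrow{i_1\sqcup i_2}\Cyl(\cA)$ explicitly via Proposition~\ref{prp:colimit-dg}: it is $\cB\amalg\cB$ extended by the $t$-morphisms attached only to the objects and generators of $\cA$. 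The map $G$ is then the inclusion of this into $\Cyl(\cB)$, i.e.\ the further semifree extension by the $t$-morphisms attached to the objects and generators of $\cB$ lying outside $\cA$; triangularity of its differentials follows from~\eqref{eq:theta-differential} and the ordinal filtration of the new generators of $\cB$, so $G$ is a cofibration.

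For the \emph{Interchange Axiom} I will define $T\colon\Cyl(\Cyl(\cA))\to\Cyl(\Cyl(\cA))$ as the ``swap of the two cylinder coordinates''. On the two copies of $\Cyl(\cA)$ sitting inside $\Cyl(\Cyl(\cA))$, the conditions $T\circ i_r=\Cyl(i_r)$ and $T\circ\Cyl(i_r)=i_r$ force $T$ to interchange an inner copy-index with an outer copy-index and to exchange each inner $t$-morphism with the corresponding outer one; a short computation (for instance on $j_1(t_f)$, using Definition~\ref{dfn:generalised-t-morphisms} to see that the image of $t_{df}$ is $s_{i_1(df)}$) shows these prescriptions are mutually consistent. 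On the remaining generators — the outer $t$-morphisms attached to the inner $t$-morphisms and to the inner $t_f$ — I will set $T$ equal to the identity up to a sign determined by the degree, which is exactly what is needed for $d\circ T=T\circ d$. Both interchange identities then hold by construction, and the verification that $T$ is a dg functor reduces to a finite mechanical check on generators using Definition~\ref{dfn:generalised-t-morphisms} and Proposition~\ref{prp:properties-t-theta}.

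The main obstacle is the \emph{homotopy extension property}. Given a semifree extension $F\colon\cA\to\cB$, an index $r$, a dg functor $G\colon\cB\to\cC$, and $H\colon\Cyl(\cA)\to\cC$ with $H\circ i_r=G\circ F$, I must produce $E\colon\Cyl(\cB)\to\cC$ with $E\circ i_r=G$ and $E\circ\Cyl(F)=H$; equivalently, the canonical semifree extension $j\colon\cB\cup_{\cA,i_r}\Cyl(\cA)\to\Cyl(\cB)$ — which adds the new generators of $\cB$ in the copy complementary to $r$, together with the $t$-morphisms attached to the objects and generators of $\cB$ lying outside $\cA$ — must admit a retraction. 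One cannot shortcut this by invoking that every object of $\dgCat$ is fibrant, since that is part of Tabuada's theorem which this section is meant to avoid, so the retraction must be built by hand. I plan to construct $E$ by transfinite induction along the ordinal filtration of $\cB$ over $\cA$, reducing to the addition of a single object and of a single generator. Adding an object $C$ is trivial: send its second copy to $G(C)$, $t_C$ and $t'_C$ to $1_{G(C)}$, and $\hat t_C,\check t_C,\bar t_C$ to $0$. Adding a generator $g\colon X\to Y$ with $dg=w$ is the essential case: $E(i_1(g)):=G(g)$ is forced, $E(i_2(g))$ is taken to be the transported morphism $H(t_Y)\circ G(g)\circ H(t'_X)$ corrected by a coboundary assembled from $H(t_{dg})$ and $H(\check t_X)$ (here $t_{dg}\in\Cyl(\cA)$ is as in Definition~\ref{dfn:generalised-t-morphisms}) so that $dE(i_2(g))=H(i_2(w))$, and then $E(t_g)$ is defined by exhibiting its prescribed differential $(-1)^{|g|}\bigl(E(i_2(g))\circ E(t_X)-E(t_Y)\circ G(g)\bigr)+E(t_{dg})$ as an explicit coboundary in $\cC$. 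The point is that the contracting homotopies $\hat t_X,\check t_X$ and the correction term $t_{dg}$ of Definition~\ref{dfn:cylinder-pre-dg} provide exactly the ``room'' to solve these extension problems compatibly; the hard part — and essentially all of the real work — is the degree and sign bookkeeping verifying that these correction coboundaries exist and that the resulting assignment is a well-defined dg functor.
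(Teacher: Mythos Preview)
Your overall plan matches the paper's: verify the five axioms of Definition~\ref{dfn:i-category} directly, with the homotopy extension property handled by transfinite induction along the filtration of the semifree extension. The Cylinder, Pushout, Relative Cylinder, and Cofibration axioms are treated correctly and in parallel with the paper. One minor slip in your HEP sketch: you write that $t_{dg}\in\Cyl(\cA)$, but since $dg$ may involve earlier new generators of $\cB$, in general $t_{dg}$ lies only in the portion of $\Cyl(\cB)$ on which $E$ (not $H$) has already been defined by induction; your later use of $E(t_{dg})$ is the correct formulation, and the paper's explicit formulas for $E(b_i^2)$ and $E(t_{b_i})$ confirm your outline.

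There is, however, a genuine gap in the Interchange Axiom. You propose to define $T$ on the remaining generators $t_{t_C},t_{t'_C},t_{\hat t_C},t_{\check t_C},t_{\bar t_C},t_{t_f}$ as the identity up to a sign. This works for $t_{t_C}$ and $t_{t_f}$ (the paper sends them to their negatives), but it cannot work for $t_{t'_C},t_{\hat t_C},t_{\check t_C},t_{\bar t_C}$, and not merely for differential reasons: since $T$ swaps $C^{rs}\mapsto C^{sr}$ on objects, $T(t_{t'_C})$ must be a morphism $C^{12}\to C^{21}$, whereas $t_{t'_C}$ itself goes $C^{21}\to C^{12}$, so $\pm t_{t'_C}$ is not even a candidate. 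The underlying asymmetry is that $\Cyl(\Cyl(\cA))$ contains $t_{t'_C}$ (an outer $t$ attached to the inner $t'_C$) but no morphism ``$t'_{t_C}$'' (an outer $t'$ attached to the inner $t_C$) to serve as its swap. The paper resolves this by sending each of $t_{t'_C},t_{\hat t_C},t_{\check t_C},t_{\bar t_C}$ to an explicit composite built from $t_{t_C}$ and the outer localization morphisms $t'_{C^r},\hat t_{C^r},\check t_{C^r},\bar t_{C^r}$; for instance $T(t_{t'_C})=t'_{C^2}\circ t_{t_C}\circ t'_{C^1}$ plus two correction terms. The resulting $T$ is only an involution up to homotopy on these four generators. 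You will need to discover and verify such formulas; the ``finite mechanical check'' you anticipate is real, but it does not reduce to a choice of sign.
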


\begin{proof}
	First, note that the empty category $\emptyset$ is an initial object of $\sCat$. We introduce the following conventions and review some definitions:
	\begin{itemize}
		\item Given a semifree dg category $\cA$, we fix a set of generating morphisms $\{a_i\colon X_i\to Y_i\}$ (indexed by an ordinal). Then, the semifree dg category $\Cyl(\cA)$ is given as follows:
		\begin{enumerate}[label = (\roman*)]
			\item {\em Objects:} $A^1,A^2$ for each $A\in\cA$.
			\item {\em Generating morphisms:} For each object $A\in\cA$,
			\[t_A\colon A^1\to A^2,\quad t_A'\colon A^2\to A^1,\quad \hat t_A\colon A^1\to A^1,\quad \check t_A\colon A^2\to A^2,\quad \bar t_A\colon A^1\to A^2,\]
			and for each generating morphism $a_i\colon X_i\to Y_i$ of $\cA$,
			\[a_i^1\colon X_i^1\to Y_i^1,\quad a_i^2\colon X_i^2\to Y_i^2,\quad t_{a_i}\colon X_i^1\to Y_i^2 .\]
			\item {\em Degrees:} 
			\[|t_A|=|t'_A|=0, \quad |\hat t_A|=|\check t_A|=-1,\quad |\bar t_A|=-2,\quad |a_i^1|=|a_i^2|=|a_i|,\quad |t_{a_i}|=|a_i|-1 .\]
			\item {\em Differentials:}
			\begin{gather*}
				dt_A=dt'_A=0,\quad d\hat t_A=1_{A^1}-t'_A\circ t_A,\quad d\check t_A=1_{A^2}-t_A\circ t'_A,\quad d\bar t_A=t_A\circ\hat t_A-\check t_A\circ t_A ,\\
				da_i^1=(da_i)^1,\quad da_i^2=(da_i)^2,\quad
				dt_{a_i}=(-1)^{|a_i|}(a_i^2\circ t_{X_i} - t_{Y_i}\circ a_i^1)+ t_{da_i} ,
			\end{gather*}
			where $t_{da_i}$ is defined according to Definition \ref{dfn:generalised-t-morphisms}, and for each $r\in\{1,2\}$, $(da_i)^r$ is defined via the dg functor
			\begin{gather*}
				i_r\colon \cA\to\Cyl(\cA)\\
				A\mapsto A^r,\quad a_i\mapsto a_i^r
			\end{gather*}
			by setting $a^r:=i_r(a)$ for each morphism $a$ of $\cA$.
		\end{enumerate}
		
		\item Given a semifree extension $F\colon \cA\to\cB$ by a set of objects $R_b$ and a set of morphisms $S_b=\{b_i\colon U_i\to V_i\}$, we select a set of generating morphisms of $\cB$ as
		\[\{F(a_i)\vb a_i\text{ is a generating morphism of }\cA\}\cup S_b .\]
		It is important to note that the functor $\Cyl$ may initially depend on the choice of generating morphisms, but it ultimately does not (up to natural isomorphism), as discussed in Remark \ref{rmk:cyl-functor-well-defined}.
		
		\item Given a dg functor $G\colon\cA\to\cC$ between semifree dg categories, the dg functor $\Cyl(G)$ is defined as follows:
		\begin{gather*}
			\Cyl(G)\colon\Cyl(\cA)\to\Cyl(\cC)\\
			A^r\mapsto G(A)^r,\quad a_i^r\mapsto G(a_i)^r,\quad t_{a_i}\mapsto t_{G(a_i)},\\
			t_A\mapsto t_{G(A)},\quad t'_A\mapsto t'_{G(A)},\quad \hat t_A\mapsto \hat t_{G(A)},\quad \check t_A\mapsto \check t_{G(A)}, \quad \bar t_A\mapsto \bar t_{G(A)},
		\end{gather*}
		where $r\in\{1,2\}$.
	\end{itemize}
		
	With the notations established, we proceed to verify the axioms outlined in Definition \ref{dfn:i-category} for $\cM=\sCat$ with cofibrations defined as semifree extensions and $I=\Cyl$:
	\begin{enumerate}
		\item {\em Cylinder Axiom:} The functor $I=\Cyl$ satisfies the conditions of Definition \ref{dfn:cylinder-functor} by Corollary \ref{cor:cyl-functor}, thus fulfilling the Cylinder Axiom. Note that Definition \ref{dfn:cylinder-functor} provides more than what is necessary for the axiom.
		
		\item {\em Pushout Axiom:} The existence of the pushout in Definition \ref{dfn:i-category}\eqref{item:pushout-axiom} is given by Proposition \ref{prp:colimit-dg}. Also, we have $\Cyl(\emptyset)=\emptyset$ by definition. Moreover, if $F\colon\cA\to\cB$ is a semifree extension by a set of objects $R_b$ and a set of morphisms $S_b$, then $\Cyl(F)\colon \Cyl(\cA)\to\Cyl(\cB)$ is a semifree extension by the set of objects $\{B^1,B^2\vb B\in R_b\}$ and the set of morphisms
		\[\{t_{B},t'_{B},\hat t_{B},\check t_{B},\bar t_{B}\vb B\in R_b\}\cup \{b_i^1,b_i^2,t_{b_i}\vb b_i\in S_b\} .\]
		Then, given a pushout square
		\[\begin{tikzcd}
			\cB\ar[r,"\bar G"] & \cD \\
			\cA\ar[u,"F"]\arrow[r, "G"] & \cC \arrow[u, "\bar F"]
		\end{tikzcd}\]
		as in Proposition \ref{prp:colimit-dg}, applying the functor $\Cyl$ yields a commutative diagram
		\[\begin{tikzcd}
			\Cyl(\cB)\ar[r,"\Cyl(\bar G)"] & \Cyl(\cD) \\
			\Cyl(\cA)\ar[u,"\Cyl(F)"]\arrow[r, "\Cyl(G)"] & \Cyl(\cC) \arrow[u, "\Cyl(\bar F)"]
		\end{tikzcd}\]
		where $\Cyl(\bar F)$ is a semifree extension, as $\bar F$ is. Also, it is straightforward to verify that $\Cyl(\bar F)=\overline{\Cyl(F)}$ and $\Cyl(\bar G)=\overline{\Cyl(G)}$. Thus, the above diagram forms a pushout square.
		
		\item {\em Cofibration Axiom:} Clearly, each isomorphism in $\sCat$ is a semifree extension, and for each $\cA\in\sCat$, $\emptyset\to\cA$ is a semifree extension. Moreover, the composition of semifree extensions is a semifree extension.
		
		Regarding the homotopy extension property, let us first set $r=1$ without loss of generality. Consider the commutative diagram described in Definition \ref{dfn:i-category}\eqref{item:cofibration-axiom}, where $I=\Cyl$. Then, we define the dg functor $E\colon \Cyl(\cB)\to\cC$ as follows: Since $\Cyl(F)\colon\Cyl(\cA)\to\Cyl(\cB)$ is a semifree extension, we can view $\Cyl(\cA)$ as a full dg subcategory of $\Cyl(\cB)$. We define
		\[E|_{\Cyl(\cA)}:=H.\]
		Moreover, we set
		\[E(B^1)=E(B^2):=G(B),\quad E(t_B)=E(t'_B):=1_{G(B)},\quad E(\hat t_B)=E(\check t_B)=E(\bar t_B):=0\]
		for each $B\in R_b$, and
		\[E(b_i^1):=G(b_i)\]
		for each $b_i\in S_b$. We will define $E(b_i^2)$ and $E(t_{b_i})$ by transfinite induction: Let us give an ordering on the set of morphisms $S_b=\{b_i\colon U_i\to V_i\}$ (indexed by an ordinal) such that each $db_i$ in $\cB$ is generated by the morphisms of $\cA$ and $\{b_j\vb j<i\}$. Assume that $E$ is defined on the morphisms generated by the morphisms of $\Cyl(\cA)$ and
		\[\{t_{B},t'_{B},\hat t_{B},\check t_{B},\bar t_{B}\vb B\in R_b\}\cup\{b_i^1\vb b_i\in S_b\}\cup\{b_j^2,t_{b_j}\vb j<i\} .\]
		Then, we define
		\begin{align*}
			E(b_i^2)&:=E(t_{V_i}\circ b_i^1\circ t_{U_i}' - (-1)^{|b_i|}t_{db_i}\circ t'_{U_i}-(-1)^{|b_i|}(db_i)^2\circ \check t_{U_i}),\\
			E(t_{b_i})&:=E(-t_{V_i}\circ b_i^1\circ\hat t_{U_i} + (-1)^{|b_i|}t_{db_i}\circ\hat t_{U_i} - (-1)^{|b_i|}(db_i)^2\circ \bar t_{U_i}) .
		\end{align*}
		It is straightforward to check that $E$ is a dg functor, as $dE(b_i^2)=E(db_i^2)=E((db_i)^2)$ and $dE(t_{b_i})=E(dt_{b_i})=E((-1)^{|b_i|}(b_i^2\circ t_{U_i} - t_{V_i}\circ b_i^1)+t_{db_i})$, and it satisfies $E\circ i_1 = G$ and $E\circ \Cyl(F) = H$.
		
		\item {\em Relative Cylinder Axiom:} Consider the diagram described in Definition \ref{dfn:i-category}\eqref{item:relative-cylinder-axiom}, where $I=\Cyl$. The dg category $\cB\cup_{\cA} \Cyl(\cA)\cup_{\cA}\cB$ is a semifree extension of $\Cyl(\cA)$ by the set of objects $\{B^1,B^2\vb B\in R_b\}$ and the set of morphisms $\{b_i^1,b_i^2\vb b_i\in S_b\}$. Then, the dg functor $G\colon \cB\cup_{\cA} \Cyl(\cA)\cup_{\cA} \cB\to \Cyl(\cB)$ is a semifree extension by the morphisms
		\[\{t_{B},t'_{B},\hat t_{B},\check t_{B},\bar t_{B}\vb B\in R_b\}\cup \{t_{b_i}\vb b_i\in S_b\} .\]
		Hence, $G$ is a cofibration.
		
		\item {\em Interchange Axiom:} First, note that for $s\in\{1,2\}$, we have the dg functor
		\begin{align*}
			i_s\colon\Cyl(\cA)&\to\Cyl(\Cyl(\cA))\\
			A^r&\mapsto A^{rs} &&\text{if $A^r\in\Ob(\Cyl(\cA))$ and $r\in\{1,2\}$,}\\
			\theta&\mapsto \theta^s &&\text{if $\theta\in\Mor(\Cyl(\cA))$,}
		\end{align*}
		and
		the commutative diagram
		\[\begin{tikzcd}
			\cA\ar[r,"i_r"]\ar[d,"i_s"] & \Cyl(\cA)\ar[d,"i_s"]\\
			\Cyl(\cA)\ar[r,"\Cyl(i_r)"] & \Cyl(\Cyl(\cA))
		\end{tikzcd}\]
		for any $r,s\in\{1,2\}$ by Cylinder Axiom. Then, the semifree dg category $\Cyl(\Cyl(\cA))$ can be given as follows:
		\begin{enumerate}[label = (\roman*)]
			\item {\em Objects:} $A^{rs}:=i_s(i_r(A))=\Cyl(i_r)(i_s(A))$ for each $A\in\cA$ and $r,s\in\{1,2\}$.
			\item {\em Generating morphisms:} For each object $A\in\cA$ and $r\in\{1,2\}$,
			\begin{gather*}
				(t^*_A)^r:=i_r(t^*_A),\quad t^*_{A^r}:=\Cyl(i_r)(t^*_A),\\
				t_{t_A}\colon A^{11}\to A^{22},\quad t_{t'_A}\colon A^{21}\to A^{12},\quad t_{\hat t_A}\colon A^{11}\to A^{12},\quad t_{\check t_A}\colon A^{21}\to A^{22},\quad t_{\bar t_A}\colon A^{11}\to A^{22} ,
			\end{gather*}
			where $t^*\in\{t,t',\hat t,\check t, \bar t\}$, and for each generating morphism $a_i\colon X_i\to Y_i$ of $\cA$ and $r,s\in\{1,2\}$,
			\[a_i^{rs}:=i_s(i_r(a_i))=\Cyl(i_r)(i_s(a_i)),\quad (t_{a_i})^r:=i_r(t_{a_i}),\quad t_{a_i^r}:=\Cyl(i_r)(t_{a_i}),\quad
			t_{t_{a_i}}\colon X_i^{11}\to Y_i^{22} .\]
			\item {\em Degrees:} $|(t^*_A)^r|=|t^*_{A^r}|=|t^*_A|,\; |t_{t^*_A}|=|t^*_A|-1,\; |(t^*_{a_i})^r|=|t^*_{a_i^r}|=|a_i|-1,\; |t_{t_{a_i}}|=|a_i|-2 .$
			\item {\em Differentials:}
			\begin{gather*}
				d(t^*_A)^r=i_r(dt^*_A),\quad d(t^*_{A^r})=\Cyl(i_r)(dt^*_A),\\
				dt_{t_A}=(t_A)^2\circ t_{A^1}-t_{A^2}\circ (t_A)^1,\quad 
				dt_{t'_A}=(t_A')^2 \circ t_{A^2}-t_{A^1}\circ (t_A')^1 ,\\
				dt_{\hat t_A}=-(\hat t_A)^2\circ t_{A^1}+t_{A^1}\circ (\hat t_A)^1 -(t'_A)^2\circ t_{t_A} - 	t_{t'_A}\circ (t_A)^1,\\
				dt_{\check t_A}=-(\check t_A)^2\circ t_{A^2}+t_{A^2}\circ (\check t_A)^1 -(t_A)^2\circ 	t_{t'_A} - t_{t_A}\circ (t'_A)^1 ,\\
				dt_{\bar t_A}=(\bar t_A)^2\circ t_{A^1}-t_{A^2}\circ (\bar t_A)^1 +(t_A)^2\circ t_{\hat 	t_A}-t_{t_A}\circ(\hat t_A)^1 - (\check t_A)^2\circ t_{t_A} -t_{\check t_A}\circ (t_A)^1,\\
				da_i^{rs}=i_s(i_r(da_i))=\Cyl(i_r)(i_s(da_i)),\quad d(t_{a_i})^r=i_r(dt_{a_i}),\quad dt_{a_i^r}=\Cyl(i_r)(dt_{a_i}),\\
				dt_{t_{a_i}}=(-1)^{|a_i|}(-(t_{a_i})^2\circ t_{X_i^1}+t_{Y_i^2}\circ (t_{a_i})^1
				+a_i^{22}\circ t_{t_{X_i}} +t_{a_i^2}\circ (t_{X_i})^1
				-(t_{Y_i})^2\circ t_{a_i^1})- t_{t_{Y_i}}\circ a_i^{11} +t_{t_{da_i}} .
			\end{gather*}
		\end{enumerate}

	\end{enumerate}
	Given this expression of $\Cyl(\Cyl(\cA))$, we define a functor
	\begin{gather*}
		T\colon \Cyl(\Cyl(\cA))\to\Cyl(\Cyl(\cA))\\
		A^{rs}\mapsto A^{sr},\quad
		(t^*_A)^r\mapsto t^*_{A^r},\quad
		t^*_{A^r}\mapsto (t^*_A)^r\\
		t_{t_A}\mapsto -t_{t_A},\quad
		t_{t'_A}\mapsto t'_{A^2}\circ t_{t_A}\circ t'_{A^1} - \hat t_{A^2}\circ (t_A)^1\circ t'_{A^1} +t'_{A^2}\circ (t_A)^2\circ \check t_{A^1},\\
		t_{\hat t_A}\mapsto -t'_{A^2}\circ t_{t_A}\circ\hat t_{A^1} + \hat t_{A^2}\circ (t_A)^1\circ \hat t_{A^1} + t'_{A^2}\circ (t_A)^2\circ\bar t_{A^1} ,\\
		t_{\check t_A}\mapsto \check t_{A^2}\circ  t_{t_A}\circ t'_{A^1} +\check t_{A^2}\circ (t_A)^2\circ \check t_{A^1}+ \bar t_{A^2}\circ (t_A)^1\circ t'_{A^1},\\
		t_{\bar t_A}\mapsto \check t_{A^2}\circ t_{t_A}\circ \hat t_{A^1} + \bar t_{A^2}\circ (t_A)^1\circ \hat t_{A^1} -\check t_{A^2}\circ (t_A)^2\circ \bar t_{A^1},\\
		a_i^{rs}\mapsto a_i^{sr},\quad
		(t_{a_i})^r\mapsto t_{a_i^r},\quad
		t_{a_i^r}\mapsto (t_{a_i})^r,\quad
		t_{t_{a_i}}\mapsto -t_{t_{a_i}},
	\end{gather*}
	where $r,s\in\{1,2\}$. The functor $T$ clearly satisfies $T\circ i_r=\Cyl(i_r)$ and $T\circ \Cyl(i_r)=i_r$ for any $r\in\{1,2\}$. Confirming that $T$ is a dg functor involves a straightforward computation, with the exception of showing $dT(t_{t_{a_i}})=T(dt_{t_{a_i}})$. Upon observing that $T(t_{t_{da_i}})=-t_{t_{da_i}}$, this also becomes trivial.
\end{proof}

\begin{rmk}
	The dg functor $T$ introduced in the proof of Interchange Axiom acts as an involution as expected, except for the morphisms $t_{t'_A}, t_{\hat t_A}, t_{\check t_A}, t_{\bar t_A}$. These morphisms lack counterparts such as $t'_{t_A}, \hat t_{t_A}, \check t_{t_A}, \bar t_{t_A}$. However, they are superfluous in $\Cyl(\Cyl(\cA))$ (compare with Theorem \ref{thm:cylinder-loc}). More explicitly, $t_{t'_A}, t_{\hat t_A}, t_{\check t_A}, t_{\bar t_A}$ are homotopic to $-T(t_{t'_A}), -T(t_{\hat t_A}), -T(t_{\check t_A}), -T(t_{\bar t_A})$, respectively. Therefore, $T$ acts like an involution, but up to homotopy.
\end{rmk}

In order to show that $\sCat$ is a cofibration category using Theorem \ref{thm:dgcat-I-category}, we first need to establish an analogue of Whitehead theorem for semifree dg categories. Before presenting the theorem, we need to introduce some definitions and lemmas:

\begin{dfn}
	Let $\cM$ be an $I$-category with a structure $(\text{cof},I)$.
	\begin{enumerate}
		\item Two morphisms $F_1,F_2\colon \cA\to\cB$ in $\cM$ are {\em homotopic}, which is denoted by $F_1\approx F_2$, if there exists a morphism $H\colon I(\cA)\to\cB$ in $\cM$ such that $H\circ i_1 =F_1$ and $H\circ i_2=F_2$.
		
		\item A morphism $F\colon\cA\to\cB$ in $\cM$ is a {\em homotopy equivalence} if there exists a morphism $G\colon\cB\to\cA$ in $\cM$ such that $G\circ F\approx 1_{\cA}$ and $F\circ G\approx 1_{\cB}$.
	\end{enumerate}
\end{dfn}

\begin{lem}\label{lem:homotopy-equivalence}
	Let $\cA$ be a dg category. If $A,C\in\cA$ are isomorphic in $H^0(\cA)$, then there exist morphisms
	\[r\colon A\to C,\quad r'\colon C\to A,\quad \hat r\colon A\to A,\quad \check r\colon C\to C,\quad \bar r\colon A\to C\]
	with the gradings $|r|=|r'|=0$, $|\hat r|=|\check r|=-1$, $|\bar r|=-2$, and the differentials
	\[dr=dr'=0,\quad d\hat r=1_A-r'\circ r,\quad \check r=1_C-r\circ r',\quad d\bar r=r\circ \hat r - \check r \circ r .\]
\end{lem}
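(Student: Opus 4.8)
The plan is to build the required package of morphisms as a homotopy‑theoretic ``contraction'' of the isomorphism class $[r]\in H^0(\cA)$, in two stages: first extract the degree $0$ and degree $-1$ data directly from the meaning of ``isomorphic in $H^0(\cA)$'', then correct the degree $-1$ data so that the degree $-2$ morphism $\bar r$ can exist. \emph{Step 1.} Since $A\cong C$ in $H^0(\cA)$, choose closed degree‑$0$ morphisms $r\colon A\to C$ and $r'\colon C\to A$ whose classes in $H^0(\cA)$ are mutually inverse. Then $1_A-r'\circ r$ and $1_C-r\circ r'$ are coboundaries in $\hom^*(A,A)$ and $\hom^*(C,C)$, so there exist degree‑$(-1)$ morphisms $\hat r_0\colon A\to A$ and $\check r\colon C\to C$ with $d\hat r_0=1_A-r'\circ r$ and $d\check r=1_C-r\circ r'$. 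I would keep $r,r',\check r$ as the final choices for those three; only $\hat r$ and $\bar r$ remain to be pinned down.

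\emph{Step 2: the obstruction.} Set $\omega:=r\circ\hat r_0-\check r\circ r\in\hom^{-1}(A,C)$. A one‑line Leibniz computation gives $d(r\circ\hat r_0)=r-r\circ r'\circ r=d(\check r\circ r)$, so $\omega$ is closed and defines a class $[\omega]\in H^{-1}(\hom^*(A,C))$. Replacing $\hat r_0$ by $\hat r:=\hat r_0+h$ for a \emph{closed} degree‑$(-1)$ morphism $h\in\hom^*(A,A)$ leaves $d\hat r$ unchanged and replaces $\omega$ by $\omega+r\circ h$; so it suffices to pick $h$ closed with $\omega+r\circ h$ exact, and then take $\bar r\in\hom^{-2}(A,C)$ with $d\bar r=\omega+r\circ h=r\circ\hat r-\check r\circ r$.

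\emph{Step 3: killing the obstruction.} Because $[r]$ is invertible in $H^0(\cA)$, post‑composition with $r$ is a chain map $r_*\colon\hom^*(A,A)\to\hom^*(A,C)$ which is a homotopy equivalence, a homotopy inverse being post‑composition with $r'$ (the chain homotopies are built from $\hat r_0$ and $\check r$). Hence $r_*$ is an isomorphism on $H^{-1}$, so there is a closed $h\in\hom^{-1}(A,A)$ with $[r\circ h]=-[\omega]$, i.e.\ $\omega+r\circ h$ is exact, say $\omega+r\circ h=d\bar r$. Taking $\hat r:=\hat r_0+h$ then produces $r,r',\hat r,\check r,\bar r$ with exactly the stated degrees and differentials, since $d\bar r=r\circ\hat r-\check r\circ r$ by construction and the other three differential identities are untouched.

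The only genuine point — the main obstacle — is Step 3: recognizing that $\bar r$ need not exist for the naive $\hat r_0,\check r$, that the defect is a well‑defined class in $H^{-1}(\hom^*(A,C))$, and that it can be annihilated by adjusting $\hat r$ within its homotopy class, using that composition with an $H^0$‑isomorphism is a quasi‑isomorphism on hom‑complexes. Symmetrically one could correct $\check r$ instead, or split the correction between $\hat r$ and $\check r$; the asymmetric choice above is the least fussy. All remaining verifications (the Leibniz computation in Step 2 and the explicit chain homotopies in Step 3) are routine.
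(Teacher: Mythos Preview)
Your proof is correct, but it takes a genuinely different route from the paper. You fix $r,r',\check r$, treat the defect $\omega=r\hat r_0-\check r r$ as an obstruction class in $H^{-1}(\hom^*(A,C))$, and kill it by adjusting $\hat r$ using the fact that post-composition with $r$ is a quasi-isomorphism on hom-complexes. The paper instead writes down explicit closed-form formulas: starting from any $s,s',\hat s,\check s$ satisfying the first four relations, it sets
\[r:=s,\quad r':=s'ss',\quad \hat r:=\hat s+s'\check s\,s,\quad \check r:=\check s+s\hat s\,s',\quad \bar r:=\check s\,s\,\hat s - s\hat s\hat s - \check s\check s\,s,\]
and a direct Leibniz computation verifies all five identities at once. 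Note in particular that the paper \emph{modifies} $r'$ (replacing $s'$ by $s'ss'$), which is what makes the explicit $\bar r$ work; you instead leave $r'$ alone and absorb the correction entirely into $\hat r$. Your argument is more conceptual and explains \emph{why} the construction must succeed, while the paper's is fully constructive and avoids any appeal to quasi-isomorphisms --- which matters slightly here since the lemma is being used as a building block precisely to avoid circularity with the Whitehead-type arguments that follow.
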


\begin{proof}
	Assume $A,C\in\cA$ are isomorphic in $H^0(\cA)$. Then, there exist morphisms
	\[s\colon A\to C,\quad s'\colon C\to A,\quad \hat s\colon A\to A,\quad \check s\colon C\to C\]
	with the gradings $|s|=|s'|=0$, $|\hat s|=|\check s|=-1$, and the differentials
	\[ds=ds'=0,\quad d\hat s=1_A-s'\circ s,\quad \check s=1_C-s\circ s' .\]
	We get the desired morphisms by defining
	\[r:=s,\quad r':=s'\circ s \circ s',\quad \hat r:=\hat s + s'\circ \check s\circ s,\quad \check r:=\check s+s\circ \hat s\circ s',\quad \bar r:=\check s\circ s\circ \hat s-s\circ\hat s\circ\hat s - \check s\circ \check s\circ s .\]
\end{proof}

\begin{lem}\label{lem:full-and-faithful}
	Let $\cA$ and $\cB$ be dg categories, and $F\colon\cA\to\cB$ be a dg functor such that $H^*(F)$ is full and faithful. If $da=0$ and $db=F(a)$ for some $a\in\Mor(\cA)$ and $b\in\Mor(\cB)$, then there exist $c\in\Mor(\cA)$ and $e\in\Mor(\cB)$ such that $dc=a$ and $de=b-F(c)$.
\end{lem}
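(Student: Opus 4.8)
The plan is to extract from the hypotheses on $H^*(F)$ the two purely cohomological facts we need, applied to the single pair of objects in play. Write $a\in\hom^*_{\cA}(X,Y)$, so that $F(a)\in\hom^*_{\cB}(F(X),F(Y))$ and, since $db=F(a)$ is required to make sense, also $b\in\hom^*_{\cB}(F(X),F(Y))$. Everything below takes place in the two cochain complexes $\hom^*_{\cA}(X,Y)$ and $\hom^*_{\cB}(F(X),F(Y))$, with $H^*(F)$ restricting to a full and faithful (i.e.\ bijective) map on their cohomologies.

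First I would produce a preliminary $c_0$ with $dc_0=a$. Since $da=0$ and $db=F(a)$, the cocycle $F(a)$ is a coboundary, so $H^*(F)([a])=[F(a)]=0$ in $H^*\hom_{\cB}(F(X),F(Y))$; faithfulness of $H^*(F)$ then forces $[a]=0$ in $H^*\hom_{\cA}(X,Y)$, which gives $c_0\in\hom^*_{\cA}(X,Y)$ with $dc_0=a$. Next I would correct $c_0$ so that the discrepancy becomes exact. The element $b-F(c_0)$ is a cocycle, because $d(b-F(c_0))=db-F(dc_0)=F(a)-F(a)=0$. By fullness of $H^*(F)$, its class $[b-F(c_0)]\in H^*\hom_{\cB}(F(X),F(Y))$ lies in the image of $H^*(F)$, so there is a cocycle $c_1\in\hom^*_{\cA}(X,Y)$ (with $dc_1=0$) and a morphism $e\in\hom^*_{\cB}(F(X),F(Y))$ with $de=b-F(c_0)-F(c_1)$. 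Setting $c:=c_0+c_1$ finishes the argument: $dc=dc_0+dc_1=a$ and $de=b-F(c_0)-F(c_1)=b-F(c)$.

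There is essentially no serious obstacle here; the argument is a two-step diagram chase. The only points worth stating carefully are (i) that all the morphisms $a,F(a),b,c_0,c_1,F(c_0),F(c_1),e$ live in the correct hom-complex so that the differences make sense, and (ii) that \emph{both} halves of the hypothesis are used and in different roles: faithfulness of $H^*(F)$ is what lets us solve $dc_0=a$ at all, while fullness of $H^*(F)$ is exactly what lets us absorb the residual cocycle $b-F(c_0)$ into $F(c)$ for a possibly different $c$. (Degrees are consistent throughout: $|c|=|a|-1$ and $|e|=|b|-1=|a|-2$, and $c_1$ is taken in degree $|b|$ to match $c_0$.)
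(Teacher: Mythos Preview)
Your proof is correct and follows essentially the same approach as the paper: use faithfulness of $H^*(F)$ to find a preimage $c_0$ with $dc_0=a$, then use fullness to hit the residual cocycle $b-F(c_0)$ by some $F(c_1)$ up to a coboundary, and set $c=c_0+c_1$. The paper's argument is identical up to renaming your $c_0,c_1$ as $c_1,c_2$.
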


\begin{proof}
	Since $H^*(F)$ is faithful, $da=0$ and $db=F(a)$, there exists $c_1\in\Mor(\cA)$ such that $dc_1=a$. Then, $d(b-F(c_1))=0$, and since $H^*(F)$ is full, there exist $c_2\in\Mor(\cA)$ and $e\in\Mor(\cB)$ such that $dc_2=0$ and $de=b-F(c_1)-F(c_2)$. Setting $c:=c_1+c_2$ concludes the proof.
\end{proof}

We are now prepared to present an analogue of Whitehead's theorem for semifree dg categories:

\begin{thm}\label{thm:whitehead-for-semifree}
	Consider $\sCat$ as an $I$-category with the structure $(\text{cof},I=\Cyl)$ as in Theorem \ref{thm:dgcat-I-category}. Let $F\colon\cA\to\cB$ be a morphism in $\sCat$, i.e., a dg functor between semifree dg categories. Then, $F$ is a homotopy equivalence if and only if $F$ is a quasi-equivalence.
\end{thm}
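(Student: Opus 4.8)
I would prove the two implications separately. The direction ``homotopy equivalence $\Rightarrow$ quasi-equivalence'' is formal and uses only that $p_{\cA}\colon\Cyl(\cA)\to\cA$ is an acyclic fibration. The converse is the substantial part, and its transfinite induction (together with the attendant sign bookkeeping) is the step I expect to be the main obstacle.

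\textbf{Homotopy equivalence $\Rightarrow$ quasi-equivalence.} Since $p_{\cA}\circ i_1=p_{\cA}\circ i_2=1_{\cA}$ and $p_{\cA}\colon\Cyl(\cA)\to\cA$ is an acyclic fibration by Theorem \ref{thm:cylinder-object}, I would first observe that $i_1,i_2\colon\cA\to\Cyl(\cA)$ induce naturally isomorphic functors on $H^*$: for a closed morphism $a\colon A\to B$, the identity $dt_a=(-1)^{|a|}(i_2(a)\circ t_A-t_B\circ i_1(a))$ shows that the closed degree-zero morphisms $t_{\bullet}$ assemble into a natural isomorphism $H^*(i_1)\Rightarrow H^*(i_2)$. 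Hence any two homotopic dg functors $\cC\to\cD$ induce naturally isomorphic functors $H^*(\cC)\to H^*(\cD)$ and send each object to objects isomorphic in $H^0$. Thus if $F$ is a homotopy equivalence with inverse $G$, then $H^*(G)\circ H^*(F)$ and $H^*(F)\circ H^*(G)$ are naturally isomorphic to the identities, so $H^*(F)$ is an equivalence of $\Z$-graded $k$-linear categories; in particular $F$ is a quasi-isomorphism on every morphism complex. Moreover a homotopy $K\colon\Cyl(\cB)\to\cB$ realizing $FG\approx 1_{\cB}$ produces, for each $B\in\cB$, a closed degree-zero morphism $K(t_B)\colon FG(B)\to B$ with homotopy inverse $K(t_B')$ (because $dK(\hat t_B)=1-K(t_B')K(t_B)$ and $dK(\check t_B)=1-K(t_B)K(t_B')$), hence an isomorphism in $H^0(\cB)$; so $H^0(F)$ is essentially surjective. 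Therefore $F$ is a quasi-equivalence.

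\textbf{Quasi-equivalence $\Rightarrow$ homotopy equivalence.} The heart of the matter is the claim that every quasi-equivalence $\Phi\colon\cX\to\cY$ in $\sCat$ admits a dg functor $\Psi\colon\cY\to\cX$ with $\Phi\circ\Psi\approx 1_{\cY}$. To prove it I would fix a set of generating morphisms $\{y_i\}$ of $\cY$, indexed by an ordinal with $dy_i$ generated by the earlier ones, and build $\Psi$ together with a dg functor $K\colon\Cyl(\cY)\to\cY$ satisfying $K\circ i_1=\Phi\circ\Psi$ and $K\circ i_2=1_{\cY}$ by transfinite recursion. On objects: since $H^0(\Phi)$ is essentially surjective, choose for each $Y\in\cY$ an object $\Psi(Y)\in\cX$ with $\Phi\Psi(Y)\cong Y$ in $H^0(\cY)$, and set $K(t_Y),K(t_Y'),K(\hat t_Y),K(\check t_Y),K(\bar t_Y)$ equal to the morphisms $r,r',\hat r,\check r,\bar r$ supplied by Lemma \ref{lem:homotopy-equivalence}; by construction these reproduce the differentials of $t_Y,t_Y',\hat t_Y,\check t_Y,\bar t_Y$ in $\Cyl(\cY)$. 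At the step for a generator $y\colon U\to V$ one must find $\Psi(y)\in\cX$ with $d\Psi(y)=\Psi(dy)$ and $K(t_y)\in\cY$ with $dK(t_y)=(-1)^{|y|}(y\circ K(t_U)-K(t_V)\circ\Phi\Psi(y))+K(t_{dy})$, all terms built from $dy$ being already defined. Differentiating and using $d(dy)=0$ shows that, after multiplying on the left by $K(t_V)$ (invertible in $H^0(\cY)$), the only obstruction to choosing $\Psi(y)$ is the class of $\Phi(\Psi(dy))$; but $\Psi(dy)$ is $\Psi$ applied to a boundary, so its $\Phi$-image is a boundary, whence $\Psi(dy)$ itself is a boundary by faithfulness of $H^*(\Phi)$. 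One then performs the simultaneous choice of a primitive for $\Psi(dy)$ and of $K(t_y)$ by a single application of Lemma \ref{lem:full-and-faithful} (with $a=\Psi(dy)$ and $b$ a primitive for $\Phi(a)$ assembled from the data of Lemma \ref{lem:homotopy-equivalence}), fullness of $H^*(\Phi)$ being what absorbs the residual cohomology class. Finally one checks, using \eqref{eq:theta-differential} and Definition \ref{dfn:generalised-t-morphisms}, that $K(t_{\theta})$ is consistently defined for all morphisms $\theta$, so that $K$ is genuinely a dg functor. This inductive step is where the sign and correction-term bookkeeping is delicate; everything else is formal.

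\textbf{Conclusion.} Applying the claim to $\Phi=F$ yields $G\colon\cB\to\cA$ with $FG\approx 1_{\cB}$; then $H^*(G)$ is inverse to $H^*(F)$, so $G$ is itself a quasi-equivalence, and applying the claim to $\Phi=G$ yields $F'\colon\cA\to\cB$ with $GF'\approx 1_{\cA}$. Since $\approx$ is a congruence and an equivalence relation on the morphisms of an $I$-category (cf.\ \cite{baues}), we obtain $F\approx F(GF')=(FG)F'\approx F'$, hence $GF\approx GF'\approx 1_{\cA}$. Thus $G$ is a two-sided homotopy inverse of $F$, so $F$ is a homotopy equivalence.
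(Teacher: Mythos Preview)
Your proposal is correct. The forward direction (homotopy equivalence $\Rightarrow$ quasi-equivalence) is essentially the paper's argument, phrased more conceptually: where you invoke the natural isomorphism $H^*(i_1)\Rightarrow H^*(i_2)$ to deduce that homotopic functors induce naturally isomorphic functors on $H^*$, the paper instead verifies faithfulness and fullness of $H^*(F)$ by hand using the same data $K(t_{\bullet}),K(t'_{\bullet}),K(\hat t_{\bullet}),K(\check t_{\bullet})$ and the identity $dK(t_\theta)=(-1)^{|\theta|}(\theta\circ K(t_A)-K(t_B)\circ GF(\theta))$.

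The converse is where you genuinely diverge from the paper. You prove a single lemma---every quasi-equivalence $\Phi$ admits a one-sided homotopy inverse $\Psi$ with $\Phi\Psi\approx 1$---by one transfinite induction, and then conclude by the standard categorical trick: apply the lemma to $F$ to get $G$, then to $G$ to get $F'$, and use that $\approx$ is a congruence in an $I$-category (Baues) to obtain $GF\approx GF'\approx 1$. The paper instead performs \emph{two} transfinite inductions: the first builds $G$ together with the homotopy $K\colon\Cyl(\cB)\to\cB$ witnessing $FG\approx 1_{\cB}$ (as you do), but the second builds the homotopy $H\colon\Cyl(\cA)\to\cA$ witnessing $GF\approx 1_{\cA}$ directly over the generators of $\cA$, introducing auxiliary morphisms $u_A,v_A,u'_A,v'_A,\hat u_A,\ldots,u_{a_i},v_{a_i}$ and repeatedly invoking Lemma \ref{lem:full-and-faithful}. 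Your route is shorter and avoids this second induction entirely; the paper's route is self-contained in that it never appeals to the congruence property of $\approx$ from \cite{baues}.

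One small imprecision in your inductive step: when you write ``$\Psi(dy)$ is $\Psi$ applied to a boundary, so its $\Phi$-image is a boundary,'' this is not the actual mechanism, since $\Psi$ is not yet a chain map at the level containing $y$. What makes $\Phi\Psi(dy)$ exact is the already-constructed $K$-data: from $dK(t_{dy})=(-1)^{|dy|}(dy\circ K(t_U)-K(t_V)\circ\Phi\Psi(dy))$ and the invertibility of $[K(t_U)],[K(t_V)]$ in $H^0$, one reads off $[\Phi\Psi(dy)]=0$. The paper makes this explicit by exhibiting the primitive $r'_{D_i}\circ b_i\circ r_{B_i}+\hat r_{D_i}\circ F(G(db_i))+(-1)^{|b_i|}r'_{D_i}\circ r_{db_i}$ for $F(G(db_i))$ before applying Lemma \ref{lem:full-and-faithful}.
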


\begin{proof}
	First, let us assume that $F\colon \cA\to\cB$ is a homotopy equivalence. Then, there exists dg functors $G\colon\cB\to\cA$, $H\colon \Cyl(\cA)\to \cA$ and $K\colon \Cyl(\cB)\to \cB$ such that $H\circ i_1 =G\circ F$, $H\circ i_2=1_{\cA}$, $K\circ i_1 =F\circ G$ and $K\circ i_2=1_{\cB}$.
	\begin{itemize}
		\item $H^0(F)\colon H^0(\cA)\to H^0(\cB)$ is essentially surjective: Let $B\in\cB$. Since $K$ is a dg functor, we have the morphisms $K(t_B)\colon F(G(B))\to B$ and $K(t'_B)\colon B\to F(G(B))$ with $dK(t_B)=dK(t'_B)=0$, $dK(\hat t_B)=1_{F(G(B))}-K(t'_B)\circ K(t_B)$ and $dK(\check t_B)=1_{B}-K(t_B)\circ K(t'_B)$. Hence, $H^0(F)(G(B))=F(G(B))$ is isomorphic to $B$ in $H^0(\cB)$.
		
		\item $H^*(F)\colon H^*(\cA)\to H^*(\cB)$ is faithful: Let $a\colon A\to C$ be a morphism in $\cA$ with $da=0$ such that there exists a morphism $h\colon F(A)\to F(C)$ with $dh=F(a)$. By Proposition \ref{prp:properties-t-theta} and applying $H$, we have $dH(t_a)=(-1)^{|a|}(a\circ H(t_A)-H(t_{C})\circ G(F(a)))$. Consequently, we get
		\[d((-1)^{|a|}a\circ H(\check t_A)+(-1)^{|a|}H(t_a\circ t'_{A})+H(t_{C})\circ G(h)\circ H(t'_{A}))=a .\]
		
		\item $H^*(F)\colon H^*(\cA)\to H^*(\cB)$ is full: Let $A,C\in\cA$ and $b\colon F(A)\to F(C)$ be a morphism with $db=0$. Define $y:=K(t_{F(C)})\circ F(H(t'_C))\circ b\circ F(H(t_A))\circ K(t'_{F(A)})\colon F(A)\to F(C)$, which satisfies $dy=0$. By Proposition \ref{prp:properties-t-theta} and applying $K$, we have $dK(t_y)=(-1)^{|y|}(y\circ K(t_{F(A)})-K(t_{F(C)})\circ F(G(y)))$. Consequently, we get $dh=F(H(t'_C))\circ b\circ F(H(t_A))-F(G(y))$ where
		\begin{multline*}
			h:=(-1)^{|y|}K(t'_{F(C)}\circ t_y)-K(\hat t_{F(C)})\circ F(G(y))
			+K(\hat t_{F(C)})\circ F(H(t'_C))\circ b\circ F(H(t_A))\circ K(t'_{F(A)}\circ t_{F(A)})\\+(-1)^{|b|}F(H(t'_C))\circ b\circ F(H(t_A))\circ K(\hat t_{F(A)}) .
		\end{multline*}
		Hence, we have $dk=b-F(H(t_C)\circ G(y)\circ H(t'_A))$ where
		\[k:=F(H(t_C))\circ h\circ F(H(t'_A))+F(H(\check t_C))\circ b\circ F(H(t_A\circ t'_A))+(-1)^{|b|}b\circ F(H(\check t_A)) .\]
	\end{itemize}
	These items prove that $F$ is a quasi-equivalence.
	
	Conversely, assume that $F\colon \cA\to\cB$ is a quasi-equivalence. We first define a dg functor $G\colon \cB\to\cA$ as follows: Assume $\{b_i\colon B_i\to D_i\}$ (indexed by an ordinal) is a set of generating morphisms of $\cB$.
	\begin{itemize}
		\item Let $B\in\cB$. Since $H^0(F)$ is essentially surjective, there exists $C\in\cA$ such that $F(C)$ is isomorphic to $B$ in $H^0(\cB)$. We set $G(B):=C$. Then, by Lemma \ref{lem:homotopy-equivalence}, there exist morphisms
		\[r_B\colon F(G(B))\to B,\, r'_B\colon B\to F(G(B)),\, \hat r_B\colon F(G(B))\to F(G(B)),\, \check r_B\colon B\to B,\, \bar r_B\colon F(G(B))\to B,\]
		where their gradings and differentials are given as in Lemma \ref{lem:homotopy-equivalence}. 
		
		\item Let $b_i\colon B_i\to D_i$ be a generating morphism such that $db_i=0$. Then, $r'_{D_i}\circ b_i\circ r_{B_i}\colon F(G(B_i))\to F(G(D_i))$ is a morphism with degree $|b_i|$ and zero differential. Since $H^*(F)$ is full, there exist morphisms
		$c_i\colon G(B_i)\to G(D_i)$ and $s_{b_i}\colon F(G(B_i))\to F(G(D_i))$
		such that $|c_i|=|b_i|$, $|s_{b_i}|=|b_i|-1$, $dc_i=0$ and $ds_{b_i}=r'_{D_i}\circ b_i\circ r_{B_i} - F(c_i)$. We set $G(b_i):=c_i$. Then, there exists a morphism $r_{b_i}\colon F(G(B_i))\to D_i$ such that $|r_{b_i}|=|b_i|-1$ and
		\[dr_{b_i}=(-1)^{|b_i|}(b_i\circ r_{B_i} - r_{D_i}\circ F(G(b_i))) .\]
		
		\item We proceed with transfinite induction: Assume $G$ is defined on morphisms generated by $\{b_j\colon B_j\to D_j\vb j<i\}$ as a dg functor, and assume that for $j<i$, there exist morphisms $r_{b_j}\colon F(G(B_j))\to D_j$ such that $|r_{b_j}|=|b_j|-1$ and
		\[dr_{b_j}=(-1)^{|b_j|}(b_j\circ r_{B_j} - r_{D_j}\circ F(G(b_j)))+r_{db_j} ,\]
		where $r_b$ for any morphism $b$ generated by $\{b_j\vb j<i\}$ is defined as in Definition \ref{dfn:generalised-t-morphisms} after replacing $t$ with $r$, $i_1$ with $F\circ G$, and $i_2$ with $1_{\cB}$. Also, Proposition \ref{prp:properties-t-theta} applies to the morphisms $r_b$ after such replacement. We want to define $G(b_i)$ and $r_{b_i}$.
		
		We have $d(r'_{D_i}\circ b_i\circ r_{B_i}+\hat r_{D_i}\circ F(G(db_i))+(-1)^{|b_i|}r'_{D_i}\circ r_{db_i}) = F(G(db_i))$. Since $H^*(F)$ is full and faithful, by Lemma \ref{lem:full-and-faithful}, there exist morphisms $c_i$ and $s_{b_i}$ such that $|c_i|=|b_i|$, $|s_{b_i}|=|b_i|-1$, $dc_i=G(db_i)$ and 
		$ds_{b_i}=r'_{D_i}\circ b_i\circ r_{B_i}+\hat r_{D_i}\circ F(G(db_i))+(-1)^{|b_i|}r'_{D_i}\circ r_{db_i}-F(c_i)$.
		We set $G(b_i):=c_i$. We also define
		\[r_{b_i}:=(-1)^{|b_i|} r_{D_i}\circ s_{b_i} + (-1)^{|b_i|} \check r_{D_i}\circ b_i\circ r_{B_i} + \check r_{D_i}\circ r_{db_i} - (-1)^{|b_i|} \bar r_{D_i}\circ F(G(db_i)),\]
		which satisfies $|r_{b_i}|=|b_i|-1$ and $dr_{b_i}=(-1)^{|b_i|}(b_i\circ r_{B_i} - r_{D_i}\circ F(G(b_i)))+r_{db_i}$. Note that we can also define $r_b$ for any morphisms $b\colon B\to D$ in $\cB$ in this case as in Definition \ref{dfn:generalised-t-morphisms} (after replacing $t$ with $r$, $i_1$ with $F\circ G$, and $i_2$ with $1_{\cB}$), which satisfies $|r_{b}|=|b|-1$ and $dr_{b}=(-1)^{|b|}(b\circ r_{B} - r_{D}\circ F(G(b)))+r_{db}$ by Proposition \ref{prp:properties-t-theta}.
	\end{itemize}
	Having defined the dg functor $G\colon \cB\to\cA$ and morphisms $r_{b}\colon F(G(B))\to D$, it is straightforward to check that $F\circ G\approx 1_{\cB}$ by constructing a dg functor $K\colon \Cyl(\cB)\to\cB$ satisfying $K(t^*_B)=r^*_B$ for each $B\in\cB$ and $t^*\in\{t, t',\hat t,\check t,\bar t\}$, and $K(t_{b_i})=r_{b_i}$ for each generating morphism $b_i$ of $\cB$. Finally, we want to show that $G\circ F\approx 1_{\cA}$. For that, we need to construct a dg functor $H\colon \Cyl(\cA)\to\cA$ such that $H\circ i_1=G\circ F$ and $H\circ i_2=1_{\cA}$. Hence, we only need to define $H(t^*_A)$ for any $A\in\cA$ satisfying $|H(t^*_A)|=|t^*_A|$ and $dH(t^*_A)=H(dt^*_A)$, and $H(t_{a_i})$ for any generating morphism $a_i$ of $\cA$ satisfying $|H(t_{a_i})|=|t_{a_i}|$ and $dH(t_{a_i})=H(dt_{a_i})$:
	\begin{itemize}
		\item Let $A\in\cA$. Since $H^*(F)$ is full, there exist morphisms
		\[u_A\colon G(F(A))\to A,\quad v_A\colon F(G(F(A)))\to F(A),\quad u'_A\colon A\to G(F(A)),\quad v'_A\colon F(A)\to F(G(F(A)))\]
		such that $dv_A=r_{F(A)}-F(u_A)$ and $dv'_A=r'_{F(A)}-F(u'_A)$. Next, since $H^*(F)$ is full and faithful and
		$d(\hat r_{F(A)}+r'_{F(A)}\circ v_A+v'_A\circ F(u_A))=F(1_{G(F(A))}-u'_A\circ u_A)$,
		by Lemma \ref{lem:full-and-faithful},
		there exist morphisms $\hat u_A$ and $\hat v_A$ such that $d\hat u_A=1_{G(F(A))}-u'_A\circ u_A$ and $d\hat v_A=\hat r_{F(A)}+r'_{F(A)}\circ v_A+v'_A\circ F(u_A) - F(\hat u_A)$. Similarly, we can define $\check u_A$ and $\check v_A$ satisfying $d\check u_A=1_A-u_A\circ u'_A$ and $d\check v_A=\check r_{F(A)}+v_A\circ r'_{F(A)}+F(u_A)\circ v'_A - F(\check u_A)$. Finally, by Lemma \ref{lem:full-and-faithful} again, we can show the existence of morphisms $\bar u_A$ and $\bar v_A$ satisfying $d\bar u_A=u_A\circ \hat u_A - \check u_A\circ u_A$ and $d\bar v_A= \bar r_{F(A)}+\check v_A\circ F(u_A)-F(u_A)\circ \hat v_A -\check r_{F(A)}\circ v_A- v_A\circ \hat r_{F(A)} -v_A\circ r'_{F(A)}\circ v_A- F(\bar u_A)$. Then, we set
		\[H(t_A):=u_A,\quad H(t'_A):=u'_A,\quad H(\hat t_A):=\hat u_A,\quad H(\check t_A):=\check u_A,\quad H(\bar t_A):=\bar u_A .\]
		
		\item Let $a_i\colon A_i\to C_i$ be a generating morphism of $\cA$ satisfying $da_i=0$. First, observe that $d(r_{F(a_i)}-F(a_i)\circ v_{A_i}+(-1)^{|a_i|}v_{C_i}\circ F(G(F(a_i))))=F((-1)^{|a_i|}(a_i\circ u_{A_i}-u_{C_i}\circ G(F(a_i))))$. Hence, by Lemma \ref{lem:full-and-faithful}, there exist $u_{a_i}$ and $v_{a_i}$ satisfying
		\[du_{a_i}=(-1)^{|a_i|}(a_i\circ u_{A_i}-u_{C_i}\circ G(F(a_i)))=H(dt_{a_i})\]
		and $dv_{a_i}=r_{F(a_i)}-F(a_i)\circ v_{A_i}+(-1)^{|a_i|}v_{C_i}\circ F(G(F(a_i))) - F(u_{a_i})$. We set $H(t_{a_i}):=u_{a_i}$.
		
		\item We proceed with transfinite induction: Let $a_i\colon A_i\to C_i$ be a generating morphism of $\cA$. Assume $H(t_{a_j}):=u_{a_j}$ and $v_{a_j}$ are defined for generating morphisms $\{a_j\colon A_j\to C_j\vb j<i\}$ with $du_{a_j}=(-1)^{|a_j|}(a_j\circ u_{A_j}-u_{C_j}\circ G(F(a_j)))+u_{da_j}$ and 
		\[dv_{a_j}=r_{F(a_j)}-F(a_j)\circ v_{A_j}+(-1)^{|a_j|}v_{C_j}\circ F(G(F(a_j))) - v_{da_j} - F(u_{a_j}).\]
		Here, we set $u_a:=H(t_a)$ for any morphism $a$ generated by $\{a_j\vb j<i\}$, and
		for any morphism $a=a_{j_n}\circ\ldots\circ a_{j_1}$ with $a_{j_l}\in\{a_j\vb j<i\}$, we define
		\[v_a:=\sum_{l=1}^n (-1)^{|a|-|a_l|}F(a_{j_n}\circ\ldots\circ a_{j_{l+1}})\circ v_{a_{j_l}}\circ F(G(F(a_{j_{l-1}}\circ\ldots \circ a_{j_1}))) .\]
		The definition extends linearly to define $v_a$ for any morphism $a$ generated by $\{a_j\vb j<i\}$. Then, a version of Proposition \ref{prp:properties-t-theta} holds for the morphisms $v_a$. In particular, we have
		\[dv_a=r_{F(a)}-F(a)\circ v_{A}+(-1)^{|a|}v_{C}\circ F(G(F(a)))- v_{da}- F(u_{a}) \]
		for any morphism $a\colon A\to C$ generated by $\{a_j\vb j<i\}$. We want to show that there exist morphisms $u_{a_i}$ and $v_{a_i}$ satisfying $du_{a_i}=(-1)^{|a_i|}(a_i\circ u_{A_i}-u_{C_i}\circ G(F(a_i)))+u_{da_i}$ and 
		\[dv_{a_i}=r_{F(a_i)}-F(a_i)\circ v_{A_i}+(-1)^{|a_i|}v_{C_i}\circ F(G(F(a_i)))-v_{da_i}- F(u_{a_i}) .\]
		We observe that
		\[d(r_{F(a_i)}-F(a_i)\circ v_{A_i}+(-1)^{|a_i|}v_{C_i}\circ F(G(F(a_i))) - v_{da_i})=F((-1)^{|a_i|}(a_i\circ u_{A_i}-u_{C_i}\circ G(F(a_i)))+u_{da_i}) .\]
		Hence, by Lemma \ref{lem:full-and-faithful}, the desired morphisms $u_{a_i}$ and $v_{a_i}$ exist. We set $H(t_{a_i}):=u_{a_i}$.
	\end{itemize}
	These items define the dg functor $H\colon\Cyl(\cA)\to\cA$ satisfying $H\circ i_1=G\circ F$ and $H\circ i_2=1_{\cA}$, which implies $G\circ F\approx 1_{\cA}$. Hence, $F$ is a homotopy equivalence.
\end{proof}

At this stage, we can provide a computable functorial factorizations of dg functors as a composition of a cofibration and a quasi-equivalence. We will achieve this using mapping cylinders of dg functors, which we will define below:

\begin{dfn}\label{dfn:mapping-cylinder-dg}
	Let $\cA$ be a semifree dg category and $F\colon\cA\to\cB$ be a dg functor.
	\begin{enumerate}
		\item We define the {\em mapping cylinder} $M_F$ of $F$ as the pushout
		\[M_F:=\colim(\Cyl(\cA)\xleftarrow{i_1}\cA\xrightarrow{F}\cB) .\]
		Explicitly, $M_F$ is given as the semifree extension of $\cB\amalg\cA$ by the morphisms
		\[t_A\colon F(A)\to A,\quad t'_A\colon A\to F(A), \quad \hat t_A\colon F(A)\to F(A),\quad \check t_A\colon A\to A,\quad \bar t_A\colon F(A)\to A\]
		for each $A\in\cA$, and the morphism
		\[t_{a_i}\colon F(X_i)\to Y_i\]
		for each generating morphism $a_i\colon X_i\to Y_i$ of $\cA$, with the gradings
		\[|t_A|=|t'_A|=0,\quad |\hat t_A|=|\check t_A|=-1,\quad |\bar t_A|=-2,\quad |t_{a_i}|=|a_i|-1,\]
		and differentials
		\begin{gather*}
			dt_A=dt'_A=0,\quad d\hat t_A=1_{F(A)}-t'_A\circ t_A,\quad d\check t_A=1_{A}-t_A\circ 	t'_A,\quad d\bar t_A=t_A\circ \hat t_A-\check t_A\circ t_A,\\
			dt_{a_i}=(-1)^{|a_i|}(a_i\circ t_{X_i}-t_{Y_i}\circ F(a_i)) +t_{da_i} ,
		\end{gather*}
		where for any morphism $a$ in $\cA$, $t_a$ is defined as in Definition \ref{dfn:generalised-t-morphisms} (after replacing $i_1$ with $F$ and $i_2$ with $1_{\cA}$).
		
		\item For any commutative diagram
		\[\begin{tikzcd}
			\cA\ar[r,"F"]\ar[d,"\alpha",blue] & \cB\ar[d,"\beta",blue]\\
			\cA'\ar[r,"F'"] & \cB'
		\end{tikzcd}\]
		in $\sCat$, we define a dg functor $M(\alpha,\beta)\colon M_F\to M_{F'}$ as the image of the morphism of the diagrams
		\[\begin{tikzcd}
			\Cyl(\cA)\ar[d,"\Cyl(\alpha)",blue] & \cA\ar[l,"i_1"']\ar[r,"F"]\ar[d,"\alpha",blue] & \cB\ar[d,"\beta",blue]\\
			\Cyl(\cA')& \cA'\ar[l,"i_1"'] \ar[r,"F'"] & \cB'
		\end{tikzcd}\]
		under the colimit functor. Explicitly, $M(\alpha,\beta)$ is given by
		\[M(\alpha,\beta)|_{\cB\amalg\cA}=\beta\amalg\alpha,\quad M(\alpha,\beta)(t^*_A)=t^*_{\alpha(A)},\quad M(\alpha,\beta)(t_{a_i})=t_{\alpha(a_i)},\]
		where $t^*\in\{t,t',\hat t,\check t, \bar t\}$.
	\end{enumerate}
\end{dfn}

\begin{thm}\label{thm:mapping-cylinder-factorisation}
	Let $F\colon \cA\to\cB$ be a dg functor between semifree dg categories, $M_F$ be the mapping cylinder of $F$, and $M(\alpha,\beta)\colon M_F\to M_{F'}$ be a dg functor as in Definition \ref{dfn:mapping-cylinder-dg}.
	\begin{enumerate}
		\item\label{item:mapping-obj} There exists a factorization of $F$ by dg functors
		\[F\colon \cA\xrightarrow{j} M_F\xrightarrow{q} \cB ,\]
		where $j$ is defined as the composition $\cA\hookrightarrow \cB\amalg\cA\hookrightarrow M_F$, and $q$ is defined as
		\[q|_{\cB\amalg\cA}:=1_{\cB}\amalg F,\quad q(t_A)=q(t'_A):=1_{F(A)},\quad q(\hat t_A)=q(\check t_A)=q(\bar t_A)=q(t_{a_i}):=0 .\]
		Moreover, $j$ is a cofibration and $q$ is a quasi-equivalence.
		
		\item\label{item:mapping-mor} The factorization above is functorial. In other words, for every commutative square
		\[\begin{tikzcd}
			\cA\ar[r,"F"]\ar[d,"\alpha",blue] & \cB\ar[d,"\beta",blue]\\
			\cA'\ar[r,"F'"] & \cB'
		\end{tikzcd}\]
		in $\sCat$, the diagram
		\[\begin{tikzcd}
			\cA\ar[r,"j"]\ar[d,"\alpha",blue] & M_F\ar[r,"q"]\ar[d,"{M(\alpha,\beta)}",blue] & \cB\ar[d,"\beta",blue]\\
			\cA'\ar[r,"j'"] & M_{F'}\ar[r,"q'"] & \cB'
		\end{tikzcd}\]
		commutes, $M(1_{\cA},1_{\cA})=1_{M_{F}}$, and $M(\alpha',\beta')\circ M(\alpha,\beta)=M(\alpha'\circ\alpha,\beta'\circ\beta)$.
	\end{enumerate}
\end{thm}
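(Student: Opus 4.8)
The plan is to split the work along the two parts of the statement, treating the factorization identity $q\circ j=F$, the cofibration claim for $j$, and all of part (2) as bookkeeping, and isolating the one genuinely substantive point: that $q$ is a quasi-equivalence, which I would deduce from Whitehead's theorem for semifree dg categories (Theorem \ref{thm:whitehead-for-semifree}) by showing $q$ is a homotopy equivalence.

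\textbf{Part (1), the routine half.} First I would observe that $q\colon M_F\to\cB$ is automatically a dg functor, since it is the map induced by the universal property of the pushout $M_F=\colim(\Cyl(\cA)\xleftarrow{i_1}\cA\xrightarrow{F}\cB)$ from the pair $(F\circ p_{\cA},\,1_{\cB})$, which agree on $\cA$ because $p_{\cA}\circ i_1=1_{\cA}$. Reading off its values ($1_{\cB}$ on $\cB$; $F$ on the $\cA$-copy, since $p_{\cA}\circ i_2=1_{\cA}$; $t_A,t'_A\mapsto 1_{F(A)}$; and $\hat t_A,\check t_A,\bar t_A,t_{a_i}\mapsto 0$, using that $q$ annihilates every $t$-decorated morphism, in particular $q(t_\theta)=0$ for all $\theta$ in $\cA$) matches the formula in the statement and gives $q\circ j=F$ at once, where $j$ is the composite $\cA\hookrightarrow\cB\amalg\cA\hookrightarrow M_F$. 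I would also note $q\circ\iota=1_{\cB}$ for the pushout leg $\iota\colon\cB\hookrightarrow\cB\amalg\cA\hookrightarrow M_F$. That $j$ is a cofibration is because it is a composite of literal inclusions, each a semifree extension: $\cA\hookrightarrow\cB\amalg\cA$ adjoins the objects and generators of $\cB$, and $\cB\amalg\cA\hookrightarrow M_F$ adjoins $t_A,t'_A,\hat t_A,\check t_A,\bar t_A,t_{a_i}$ in an ordering for which every differential is generated by earlier morphisms (using that $t_{da_i}$ only involves $t_{a_j}$ with $j<i$).

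\textbf{Part (1), the crux.} To show $q$ is a quasi-equivalence I would invoke Theorem \ref{thm:whitehead-for-semifree}: since $M_F$ is a semifree extension of $\cB\amalg\cA$, hence semifree, it suffices to prove $q$ is a homotopy equivalence in the $I$-category $\sCat$ of Theorem \ref{thm:dgcat-I-category}. We already have $q\circ\iota=1_{\cB}$, so the remaining task is a homotopy $\iota\circ q\approx 1_{M_F}$, i.e.\ a dg functor $H\colon\Cyl(M_F)\to M_F$ with $H\circ i_1=\iota\circ q$ and $H\circ i_2=1_{M_F}$. Two routes are available. Route (i): appeal to the general fact in an $I$-category that the mapping cylinder of a morphism factors it as a cofibration followed by a homotopy equivalence (\cite{baues}); this applies since the $I$-category axioms for $\sCat$ have already been verified. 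Route (ii): write $H$ out by hand -- it is forced on the two copies of $\cB$ and along the ``$\cA$-to-$F(\cA)$'' direction, and on the remaining generators $t^*_X$ and $t_m$ of $\Cyl(M_F)$ one builds it from the $t$-morphisms already present in $M_F$, using Proposition \ref{prp:properties-t-theta} for the correction terms, in exactly the style of the Interchange Axiom computation in the proof of Theorem \ref{thm:dgcat-I-category}. This homotopy is the step I expect to be the main obstacle; route (ii) is the computational-but-safely-routine fallback if a clean citation for route (i) is unavailable.

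\textbf{Part (2), functoriality.} I would obtain $M(\alpha,\beta)$ by applying $\colim$ to the evident map of spans with vertical legs $\Cyl(\alpha),\alpha,\beta$; this is a map of diagrams because the left square commutes by naturality of $i_1$ (namely $\Cyl(\alpha)\circ i_1=i_1\circ\alpha$) and the right square is the given one, and the explicit formula in Definition \ref{dfn:mapping-cylinder-dg} then drops out of Proposition \ref{prp:colimit-dg} together with $\Cyl(\alpha)(t_{a_i})=t_{\alpha(a_i)}$ from Theorem \ref{thm:cylinder-morphism}. Functoriality, i.e.\ $M(1,1)=1_{M_F}$ and $M(\alpha',\beta')\circ M(\alpha,\beta)=M(\alpha'\circ\alpha,\beta'\circ\beta)$, then follows from functoriality of $\colim$ and of $\Cyl$ (Corollary \ref{cor:cyl-functor}). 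Finally, commutativity of the factorization square is a check on generators: $M(\alpha,\beta)\circ j=j'\circ\alpha$ because $M(\alpha,\beta)$ restricts to $\alpha$ on the $\cA$-copy, and $q'\circ M(\alpha,\beta)=\beta\circ q$ because both sides are $\beta$ on $\cB$, equal $\beta\circ F=F'\circ\alpha$ on $\cA$, send $t_A,t'_A$ to identities, and kill $\hat t_A,\check t_A,\bar t_A,t_{a_i}$ -- the last point being $q'(t_{\alpha(a_i)})=0=\beta(0)$, since $q'$ annihilates every $t$-decorated morphism.
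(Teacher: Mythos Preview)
Your proposal is correct and matches the paper's approach exactly: the paper also cites \cite[Lemma 3.12]{baues} for the fact that in an $I$-category the mapping cylinder factors any map as a cofibration followed by a homotopy equivalence (your route (i)), and then invokes Theorem \ref{thm:whitehead-for-semifree} to upgrade ``homotopy equivalence'' to ``quasi-equivalence,'' dismissing part (2) as a straightforward check. Your write-up simply fills in more of the routine bookkeeping that the paper leaves implicit.
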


\begin{proof}
	Since $\dgCat_s$ is an $I$-category with $I=\Cyl$ by Theorem \ref{thm:dgcat-I-category}, \cite[Lemma 3.12]{baues} shows that the first item holds with the distinction that $q$ is a homotopy equivalence. Then, Theorem \ref{thm:whitehead-for-semifree} implies that $q$ is a quasi-equivalence. As for the second item, it is straightforward to check.
\end{proof}

\begin{rmk}
	It is easy to check that the dg functor $q$ above is a fibration in the Dwyer-Kan model category of dg categories given in Theorem \ref{thm:dgcat-model}. See e.g. \cite{hocolim} for the definition of fibrations. Hence, Theorem \ref{thm:mapping-cylinder-factorisation} gives an explicit factorization of $F$ into a cofibration and an acyclic fibration in the Dwyer-Kan model category. This factorization also holds in quasi-equiconic and Morita model categories of dg categories given in Theorem \ref{thm:dgcat-model}, as they are left Bousfield localization of the Dwyer-Kan model category.
\end{rmk}

We are ready to present one of the main results of this section:

\begin{thm}\label{thm:dgcat-cofibration-category}
	The category of semifree dg categories $\dgCat_s$ forms a cofibration category, where weak equivalences are quasi-equivalences and cofibrations are semifree extensions. Moreover, every object in $\dgCat_s$ is both fibrant and cofibrant. In particular, $\dgCat_s$ makes a category of cofibrant objects.
\end{thm}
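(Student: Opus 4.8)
The plan is to deduce everything from the $I$-category structure of Theorem~\ref{thm:dgcat-I-category}, the Whitehead-type result of Theorem~\ref{thm:whitehead-for-semifree}, and the explicit mapping-cylinder factorization of Theorem~\ref{thm:mapping-cylinder-factorisation}, packaging the $I$-category axioms into the cofibration-category axioms of \cite{baues}. Concretely, I would take cofibrations to be semifree extensions and weak equivalences to be the homotopy equivalences relative to $\Cyl$; by Theorem~\ref{thm:whitehead-for-semifree} this is exactly the class of quasi-equivalences, so the two descriptions of weak equivalences coincide and it remains to verify the cofibration-category axioms (composition, pushout, factorization, fibrant models) for $\sCat$ with this structure.

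\textbf{Composition, pushout, factorization.} For composition: isomorphisms are semifree extensions and semifree extensions compose (both recorded while checking the Cofibration Axiom in the proof of Theorem~\ref{thm:dgcat-I-category}), while quasi-equivalences contain isomorphisms and satisfy two-out-of-three, which is immediate from the definition via $H^*$ and $H^0$. For the pushout axiom, existence of the pushout along a cofibration and the fact that the pushed-out leg is again a semifree extension are exactly Proposition~\ref{prp:colimit-dg} (this is also the Pushout Axiom of the $I$-category). The one genuinely nontrivial point is that the pushout of an acyclic cofibration is again a weak equivalence: using that $\Cyl$ preserves such pushouts (Pushout Axiom of Definition~\ref{dfn:i-category}, verified in Theorem~\ref{thm:dgcat-I-category}) and that an acyclic cofibration in an $I$-category is a strong deformation retract inclusion, one transports the retraction and the rel-subobject contracting homotopy across the pushout square to see that the pushed-out map is a homotopy equivalence, hence a quasi-equivalence by Theorem~\ref{thm:whitehead-for-semifree}; this is the $I$-category argument of \cite{baues}. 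Finally, the factorization axiom is precisely Theorem~\ref{thm:mapping-cylinder-factorisation}: any dg functor $F\colon\cA\to\cB$ factors as the cofibration $\cA\hookrightarrow M_F$ followed by the quasi-equivalence $M_F\to\cB$.

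\textbf{Fibrant and cofibrant objects.} I would prove directly that every $\cA\in\sCat$ is fibrant, i.e.\ every acyclic cofibration $u\colon\cA\to\cB$ admits a retraction. Choosing a homotopy inverse $v\colon\cB\to\cA$ (which exists by Theorem~\ref{thm:whitehead-for-semifree}) and a homotopy $H\colon\Cyl(\cA)\to\cA$ with $H\circ i_1=v\circ u$ and $H\circ i_2=1_{\cA}$, the Homotopy Extension Property of the cofibration $u$ (the Cofibration Axiom of the $I$-category) applied to the square with $G=v$ and $H$ yields $E\colon\Cyl(\cB)\to\cA$ with $E\circ i_1=v$ and $E\circ\Cyl(u)=H$; then $r:=E\circ i_2$ satisfies $r\circ u=E\circ\Cyl(u)\circ i_2=H\circ i_2=1_{\cA}$, so $\cA$ is fibrant and the identity $\cA\to\cA$ is a weak equivalence to a fibrant model. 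Cofibrancy is immediate: $\emptyset$ is initial in $\sCat$ and $\emptyset\to\cA$ is a semifree extension, hence a cofibration, for every $\cA$. A cofibration category in which every object is cofibrant is in particular a category of cofibrant objects in the sense of \cite{brown-homotopy} (see also \cite{kamps-porter}), which gives the final assertion.

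\textbf{Main obstacle.} The only step not reducing to a direct citation of an already-established statement is pushout-stability of acyclic cofibrations in the pushout axiom; it rests on the interplay between $\Cyl$, pushouts, and the deformation-retract description of acyclic cofibrations in an $I$-category. Everything else is bookkeeping with the $I$-category axioms, an appeal to Theorem~\ref{thm:mapping-cylinder-factorisation}, or the short Homotopy-Extension argument above, with the substantive content --- the identification of homotopy equivalences with quasi-equivalences --- already supplied by Theorem~\ref{thm:whitehead-for-semifree}.
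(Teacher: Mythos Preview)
Your proposal is correct and follows the same overall route as the paper: invoke the $I$-category structure of Theorem~\ref{thm:dgcat-I-category} and then use Theorem~\ref{thm:whitehead-for-semifree} to identify the resulting homotopy equivalences with quasi-equivalences. The only difference is one of presentation: the paper simply cites \cite{baues} for the general fact that any $I$-category is a cofibration category in which every object is fibrant and cofibrant, whereas you unpack that reference by verifying the individual axioms (composition, pushout-stability of acyclic cofibrations, factorization via Theorem~\ref{thm:mapping-cylinder-factorisation}, fibrancy via the HEP argument) directly for $\sCat$. Both are valid; your version is more self-contained but not a genuinely different argument.
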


\begin{proof}[Proof of Theorem \ref{thm:dgcat-cofibration-category}]
	According to \cite{baues}, every $I$-category is a cofibration category with the same cofibrations, and its weak equivalences are homotopy equivalences. Additionally, each of its objects is both fibrant and cofibrant. Since homotopy equivalences between semifree dg categories are exactly quasi-equivalences by Theorem \ref{thm:whitehead-for-semifree}, the result follows. Finally, cofibrant objects in a cofibration category makes a category with cofibrant objects, see e.g. \cite{kamps-porter} for the definition.
\end{proof}

\begin{rmk}\label{rmk:hocolim-from-cofibration}
	Since Theorem \ref{thm:dgcat-cofibration-category} realizes $\sCat$ as a cofibration category, we can consider homotopical constructions such as homotopy pushouts in $\sCat$, see \cite{baues}. Specifically, we can define the homotopy pushout as
	\[\hocolim(\cA\xleftarrow{\alpha}\cC\xrightarrow{\beta}\cB):=
	\colim(M_{\alpha}\xleftarrow{j}\cC\xrightarrow{\beta}\cB)\]
	up to quasi-equivalence using Theorem \ref{thm:mapping-cylinder-factorisation}\eqref{item:mapping-obj}. As a consequence, we can directly obtain Theorem \ref{thm:hocolim-functor-dg} for $\sCat$ using Theorem \ref{thm:mapping-cylinder-factorisation}\eqref{item:mapping-obj} and Theorem \ref{thm:mapping-cylinder-factorisation}\eqref{item:mapping-mor}.
\end{rmk}

We conclude this section by stating the main theorems for semifree dg algebras and dg categories/algebras of strictly finite type. Their proofs follow a similar approach to those for semifree dg categories.

\begin{thm}\label{thm:i-cof-cat-dg-algebras}\mbox{}
	\begin{enumerate}
		\item The category of semifree dg algebras $\textup{dgAlg}_s$ is an $I$-category with the structure $(\mathrm{cof},I)$, which is defined as follows:
		\begin{itemize}
			\item $\mathrm{cof}:$ Cofibrations are semifree extensions.
			
			\item $I$: The functor $I$ is the cylinder functor $\Cyl\colon\textup{dgAlg}_s \to\textup{dgAlg}_s$ described in Remark \ref{rmk:cylinder-functor-dg-algebras}.
		\end{itemize}
		
		\item The category of semifree dg algebras $\textup{dgAlg}_s$ forms a cofibration category, where weak equivalences are quasi-equivalences and cofibrations are semifree extensions. Moreover, every object in $\textup{dgAlg}_s$ is both fibrant and cofibrant. In particular, $\textup{dgAlg}_s$ makes a category of cofibrant objects.
	\end{enumerate}
\end{thm}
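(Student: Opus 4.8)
The plan is to transport the proofs of Theorems~\ref{thm:dgcat-I-category}, \ref{thm:whitehead-for-semifree}, and \ref{thm:dgcat-cofibration-category} to the category $\dgAlg_s$, using the simplifications recorded in Remark~\ref{rmk:cylinder-functor-dg-algebras}. Here the initial object is the ground ring $k$ viewed as a dg algebra concentrated in degree $0$, the coproduct $\cA\amalg\cB$ is the free product of dg algebras over $k$, a semifree dg algebra is precisely a semifree extension of $k$, and the cylinder functor $\Cyl\colon\dgAlg_s\to\dgAlg_s$ is the one from Remark~\ref{rmk:cylinder-functor-dg-algebras}, i.e.\ all formulas of Section~\ref{sec:cylinder-functor} with the substitutions $t_C=t'_C=1$ and $\hat t_C=\check t_C=\bar t_C=0$. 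Since $k$ has no generating morphisms, $\Cyl(k)=k$, so the normalization $I(\emptyset)=\emptyset$ required in Definition~\ref{dfn:i-category} holds.

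For the first item I verify the five axioms of Definition~\ref{dfn:i-category} with $\cM=\dgAlg_s$, $\mathrm{cof}$ the semifree extensions, and $I=\Cyl$. The \emph{Cylinder Axiom} is the dg-algebra version of Corollary~\ref{cor:cyl-functor} spelled out in Remark~\ref{rmk:cylinder-functor-dg-algebras}. The \emph{Pushout Axiom} follows from the evident dg-algebra analogue of Proposition~\ref{prp:colimit-dg} (pushouts of semifree extensions of free products exist), together with the observation — identical to the proof of Theorem~\ref{thm:dgcat-I-category} — that $\Cyl$ carries a semifree extension to a semifree extension and commutes with such pushouts, since $\overline{\Cyl(F)}=\Cyl(\bar F)$. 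The \emph{Cofibration Axiom} is immediate for isomorphisms, for $k\to\cA$, and for composites; the homotopy extension property is proved by the same transfinite induction as in Theorem~\ref{thm:dgcat-I-category}, defining $E$ on the new generators $b_i^2$ and $t_{b_i}$ by the formulas there with $t_{V_i}=t'_{U_i}=1$ and $\check t_{U_i}=\bar t_{U_i}=0$. The \emph{Relative Cylinder Axiom} holds because, exactly as before, the comparison map $G$ out of the pushout $\cB\cup_{\cA}\Cyl(\cA)\cup_{\cA}\cB$ is a semifree extension by the morphisms $\{t_{b_i}\}$. Finally, for the \emph{Interchange Axiom} I construct $T\colon\Cyl(\Cyl(\cA))\to\Cyl(\Cyl(\cA))$ by the same formulas as in the proof of Theorem~\ref{thm:dgcat-I-category}, which collapse once the object-indexed data is removed; one checks $T\circ i_r=\Cyl(i_r)$, $T\circ\Cyl(i_r)=i_r$, and that $T$ is a dg functor, the one nontrivial identity $dT(t_{t_{a_i}})=T(dt_{t_{a_i}})$ again reducing to $T(t_{t_{da_i}})=-t_{t_{da_i}}$.

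For the second item I first prove the Whitehead analogue: a morphism $F\colon\cA\to\cB$ in $\dgAlg_s$ is a homotopy equivalence (in the $I$-category just built) if and only if it is a quasi-equivalence, i.e.\ $H^*(F)$ is an isomorphism. This is Theorem~\ref{thm:whitehead-for-semifree} with the object-indexed data suppressed and with essential surjectivity automatic: the forward direction reproduces verbatim the ``faithful'' and ``full'' bullets of that proof (the computations producing explicit primitives from the homotopies $H$ and $K$), while the converse reproduces the transfinite construction of a quasi-inverse $G$ together with the homotopies $K\colon\Cyl(\cB)\to\cB$ and $H\colon\Cyl(\cA)\to\cA$, using Lemmas~\ref{lem:homotopy-equivalence} and~\ref{lem:full-and-faithful}. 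Granting this, \cite{baues} shows that every $I$-category is a cofibration category with the same cofibrations and with weak equivalences the homotopy equivalences, and that each of its objects is both fibrant and cofibrant; by the Whitehead analogue these weak equivalences coincide with the quasi-equivalences, which yields the asserted cofibration category structure, and the cofibrant objects therefore form a category of cofibrant objects in the sense of \cite{kamps-porter}. The mapping-cylinder factorization of Theorem~\ref{thm:mapping-cylinder-factorisation}, built from the dg-algebra specialization of Definition~\ref{dfn:mapping-cylinder-dg}, transfers along the way.

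The only real work is bookkeeping: the transfinite inductions in the homotopy extension property and in the converse half of the Whitehead analogue must be rewritten with the substitutions above, and one must confirm that no step of those arguments used more than one object — which is the case, every appeal to essential surjectivity being simply deleted. I expect this to be the main, but entirely routine, obstacle.
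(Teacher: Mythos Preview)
Your proposal is correct and matches the paper's approach: the paper does not give a separate proof but simply states that the argument follows the same route as for semifree dg categories, which is precisely what you outline. Your additional care in noting that essential surjectivity is automatic (so Lemma~\ref{lem:homotopy-equivalence} becomes trivial) and that the initial object is $k$ with $\Cyl(k)=k$ is exactly the bookkeeping the paper leaves implicit.
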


\begin{rmk}\label{rmk:finite-type}
	Let $\dgCat_f$ be the category of dg categories of strictly finite type, i.e., semifree dg categories with finitely many objects and generating morphisms. It is easy to check that Theorem \ref{thm:dgcat-I-category} and \ref{thm:dgcat-cofibration-category} hold if we replace $\dgCat_s$ with $\dgCat_f$. The same is true if we consider the category $\dgAlg_f$ of dg algebras of strictly finite type, i.e., semifree dg algebras with finitely many generating morphisms: Theorem \ref{thm:i-cof-cat-dg-algebras} still holds if we replace $\dgAlg_s$ with $\dgAlg_f$.
\end{rmk}

\section{Wrapped Fukaya category of $T^*S^n$ and the reflection functor}\label{sec:sphere-reflection}

In this section, we give a simple application of Theorem \ref{thm:hocolim-functor-dg} and \ref{thm:hocolim-functor-dg-loc} within the context of symplectic geometry.

Consider two symplectic manifolds $W_1$ and $W_2$, and a symplectic map $f: W_1 \to W_2$, which is a smooth map respecting their symplectic structures. 
It is well-known that $f$ induces a functor between their Fukaya categories. 
It is a natural question to ask how to construct the induced functor from $f$. 

For a Weinstein manifold, i.e., an open symplectic manifold satisfying certain conditions, \cite{gps2} proved that the corresponding wrapped Fukaya category can be computed, after a choice of a covering, as a homotopy colimit of the wrapped Fukaya categories of the covering elements.
Furthermore, if $f$ respects the coverings, applying Theorems \ref{thm:hocolim-functor-dg} or \ref{thm:hocolim-functor-dg-loc} allows us to describe the functor induced by $f$. To illustrate this, we provide an example of constructing an induced functor using Theorems \ref{thm:hocolim-functor-dg} and \ref{thm:hocolim-functor-dg-loc}. We consider the symplectic manifold $T^*S^n$, where $S^n$ is the $n$-dimensional sphere.

Before going further, let us discuss motivations behind considering these induced functors.

The first motivation stems from a conjecture written in \cite{Kontsevich09}, which questions whether the group of autofunctors of the Fukaya category coincides with the (stabilized) group of symplectic automorphisms. If this conjecture holds true, the induced functor would encapsulate the same information as the original symplectic automorphism. Thus, studying induced functors becomes instrumental in exploring symplectic automorphisms.

The second motivation is the homotopy colimit computation of the wrapped Fukaya category, established in \cite{gps2}. This computation involves a homotopy colimit diagram comprising categories and functors between them. For instance, in Theorems \ref{thm:hocolim-functor-dg} and \ref{thm:hocolim-functor-dg-loc}, the homotopy colimit diagram includes functors denoted as $\alpha$ and $\beta$. Therefore, to fully describe a homotopy colimit diagram and subsequently compute the wrapped Fukaya category via a homotopy colimit, it is essential to specify these corresponding functors.

In Section \ref{sec:preliminaries-symplectic}, we recall some facts about the wrapped Fukaya category of Weinstein manifolds. In Section \ref{sec:wfuk-sphere}, we calculate the wrapped Fukaya category $\cW(T^*S^n)$ of the cotangent bundle $T^*S^n$. In Section \ref{sec:reflection}, we present the dg functor $R_n\colon\cW(T^*S^n)\to\cW(T^*S^n)$ that is induced from the reflection map $r_n\colon T^*S^n\to T^*S^n$ along an axis.

\subsection{Preliminaries on wrapped Fukaya categories}\label{sec:preliminaries-symplectic}

In this subsection, we will review some key aspects of a powerful invariant known as the ``wrapped Fukaya category'' $\mathcal{W}(W)$ of a Weinstein manifold/sector $W$. A {\em Weinstein manifold} is an exact symplectic manifold whose Liouville vector field is both complete and gradient-like for an exhausting Morse function. A {\em Weinstein sector} is a Weinstein manifold with boundary, where its Liouville vector field satisfies certain conditions near the boundary. For a more detailed exposition, readers can consult \cite{weinstein} and \cite{gps1}.

The {\em wrapped Fukaya category} $\cW(W)$ of $W$ is an $A_{\infty}$-category whose objects are certain exact Lagrangians in $W$ with cylindrical ends, equipped with additional data. Morphisms in this category are generated by the intersections of Lagrangians after perturbing them through a process known as ``wrapping''. $A_{\infty}$-operations arise from counting pseudoholomorphic polygons bounded by Lagrangians (edges) and their intersections (corners). For a rigorous definition, refer to \cite{gps1} and \cite{seidel}.

The subsequent subsections will focus on the case where $W$ is a cotangent bundle, specifically, $W=T^*S^n$ for some $n$.

\begin{rmk}\label{rmk:wrapped-fukaya}\mbox{}
	\begin{enumerate}
		\item Any $A_{\infty}$-category $\cC$ can be regarded as a dg category up to quasi-equivalence by replacing $\cC$ with its image under the $A_{\infty}$-Yoneda embedding. Hence, wrapped Fukaya categories can be regarded as dg categories up to quasi-equivalence.
		
		\item\label{item:grading-pin} The wrapped Fukaya category $\cW(W)$ can be given $\Z$-grading if $2c_1(W)=0\in H^2(W;\Z)$. Also, the definition of $\cW(W)$ depends on the classes $\eta\in H^1(W;\Z)$ (grading structure) and $b\in H^2(W;\Z/2)$ (background class), which are used to give gradings on the Lagrangian intersections and orientations of moduli spaces of pseudoholomorphic disks, respectively. See \cite{seidel} for more details, or \cite{cat-entropy} for a quick overview.
	\end{enumerate}
\end{rmk}

The {\em skeleton} of a Weinstein manifold/sector $W$ can be defined as the union of the stable manifolds of the zeroes of the Liouville vector field of $W$. Then, we have the following theorem:

\begin{thm}[\cite{CDRGG,gps2}]\label{thm:cocore-generate}
	Let $W$ be a Weinstein manifold (or sector) of dimension $2n$. Then $\cW(W)$ is generated by {\em Lagrangian cocores}, which are Lagrangian disks dual to $n$ dimensional strata of the skeleton of $W$.
\end{thm}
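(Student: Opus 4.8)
The plan is to prove the statement by induction on a Weinstein handle decomposition of $W$, combining the descent (stop-removal) formalism with an exact-triangle argument. First I would fix a Weinstein Morse function on $W$; after reordering handles by index this presents $W$ as obtained from its subcritical part $W_{\mathrm{sub}}$ (all handles of index $<n$) by attaching index-$n$ handles $h_1,\dots,h_m$ along a Legendrian link $\Lambda_1\sqcup\cdots\sqcup\Lambda_m$ in the contact boundary at infinity of $W_{\mathrm{sub}}$. The Lagrangian cocores $C_1,\dots,C_m$ are the cocore disks of $h_1,\dots,h_m$, and the claim to be proved is that the full dg subcategory of $\cW(W)$ on $\{C_1,\dots,C_m\}$ is pretriangulated equivalent to $\cW(W)$.

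The inductive step hinges on the descent package of \cite{gps2}: attaching an index-$n$ Weinstein handle can be encoded, up to Liouville homotopy, as a stop-removal, and the resulting wrapped category is a quotient of the wrapped category of the stopped manifold by the subcategory generated by the linking disks of the removed stop; the cocore of the new handle is precisely (a shift of) such a linking disk, and the local model for the handle is $T^*D^n$, whose wrapped category is generated by its cotangent fibre --- the unique cocore there. I would therefore run the induction on the number $k$ of critical handles with the hypothesis that $\cW(W^{(k)})$ is generated by $C_1,\dots,C_k$, using at each stage the descent formula together with the K\"unneth formula of \cite{gps2} to transport the hypothesis and to see that exactly one new generating object, $C_k$, is added. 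The base case is $\cW(W_{\mathrm{sub}})$: a subcritical Weinstein manifold is Weinstein-homotopic to a product with $\C$, so by K\"unneth and the vanishing $SH^\bullet(\C)=0$ its wrapped category is zero --- consistent with the absence of $n$-dimensional skeletal strata, and hence of cocores --- so the induction starts cleanly.

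A more hands-on alternative, closer to \cite{CDRGG}, bypasses part of this machinery via Abouzaid's generation criterion: the objects $C_1,\dots,C_m$ split-generate $\cW(W)$ provided the open--closed map $\bigoplus_i HH_\bullet\bigl(\hom_{\cW}(C_i,C_i)\bigr)\to SH^\bullet(W)$ hits the unit $1\in SH^\bullet(W)$. Here the Weinstein geometry enters essentially: a Reeb-orbit representative of $1$ is deformed, following the Liouville (negative-gradient) flow through the critical handles, into a sum of contributions each of which factors through a single cocore, exhibiting $1$ in the image of the open--closed map. Split-generation is then upgraded to honest pretriangulated generation --- which is what the statement asserts --- by isotoping an arbitrary object off the critical handles one handle at a time: at each stage the isotopy produces an exact triangle with the corresponding cocore as a vertex, so the object is recovered as an iterated mapping cone on the $C_i$.

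I expect the main obstacle to be the geometric core of whichever route one follows. In the descent approach it is the verification that $\cW$ genuinely localizes under handle attachment and that the linking disks are exactly what is killed; this rests on the full functoriality package of \cite{gps2} (inclusion and continuation functors, the identification of stop removal with a quotient, and compatibility with K\"unneth) and on controlling a local model near each critical handle. In the generation-criterion approach the delicate point is instead the holomorphic-curve analysis showing that $1\in SH^\bullet(W)$ lies in the image of the open--closed map, together with the surgery/isotopy argument that closes the resulting cones up on the cocores. In either case one must be careful with the non-compactness of the cocore disks and with the Weinstein-sector case, where the stop along the boundary of $W$ contributes its own family of cocores that must be included among the generators.
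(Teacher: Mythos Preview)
The paper does not prove this theorem: it is stated with attribution to \cite{CDRGG,gps2} and no argument is given. It functions purely as a background result invoked later (via Remark~\ref{rmk:cotangent-generation}) to justify that cotangent fibres generate $\cW(T^*M)$. So there is no ``paper's own proof'' to compare your proposal against.

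That said, your sketch is a fair summary of the two literature proofs you cite. The stop-removal/descent route is essentially the argument in \cite{gps2}, and the open--closed/generation-criterion route is that of \cite{CDRGG}. Your identification of the delicate points --- the localization statement for handle attachment on one side, and hitting the unit in $SH^\bullet$ on the other --- is accurate. One small correction: in the sector case the extra generators coming from the boundary stop are not additional cocores of $W$ but rather linking disks of the stop (equivalently, cocores of the sector completion), so be careful with the wording there. None of this bears on the present paper, which simply quotes the result.
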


\begin{rmk}\label{rmk:cotangent-generation}
	When $W=T^*M$ for a smooth manifold $M$, its skeleton is the zero section $M$, and Lagrangian cocores correspond to cotangent fibers. Then, Theorem \ref{thm:cocore-generate} implies that the wrapped Fukaya category $\cW(T^*M)$ is generated by cotangent fibers of $T^*M$, which is originally due to \cite{abouzaid-wrapped-generation}. Moreover, if $\cW(T^*M)$ is equipped with the standard grading structure and the background class as described in \cite{nadler-zaslow}, \cite{Abouzaid12} (or \cite[Example 1.36]{gps2}) shows that
	\[\hom^*(L_p,L_p)\simeq C_{-*}(\Omega_p(M))\]
	where $L_p$ is the cotangent fiber at $p\in M$, and $C_{-*}(\Omega_p(M))$ is the normalized cubical chains on the based (Moore) loop space $\Omega_p(M)$ of $M$ at $p$. The product structure on $C_{-*}(\Omega_p(M))$ is induced by the concatenation of Moore loops, which is strictly associative.
\end{rmk}

Given an inclusion of Weinstein sectors $F\hookrightarrow W$, there is an induced $A_{\infty}$-functor $\cW(F)\to\cW(W)$ as described in \cite{gps1}. Then, we state the following theorem, which will serve as our primary tool for computing wrapped Fukaya categories:

\begin{thm}[\cite{gps2}]\label{thm:gps}
	Let $W$ be a Weinstein manifold (or sector). Suppose $W=W_1\cup W_2$ for some Weinstein sectors $W_1$ and $W_2$, and $W_1\cap W_2$ is a hypersurface in $W$ whose neighborhood can be written as $F\times T^*[0,1]$, where $F$ is Weinstein (up to deformation). Then there is a pretriangulated equivalence
	\[\cW(W)\simeq\hocolim\left(
	\begin{tikzcd}
		\cW(W_1) & & \cW(W_2)\\
		& \cW(F)\ar[lu]\ar[ru]
	\end{tikzcd}
	\right) ,\]
	where the arrows are induced by the inclusion of Weinstein sectors $F\hookrightarrow W_i$ for $i=1,2$.
\end{thm}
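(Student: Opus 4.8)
The statement is a theorem of \cite{gps2}, and since its full proof is essentially the main content of that paper and rests on the Liouville-sector package of \cite{gps1}, I will only outline the structure one would follow.

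First I would construct the comparison functor. By the covariant functoriality of wrapped Fukaya categories under inclusions of Liouville sectors (\cite{gps1}), the inclusions $F\hookrightarrow W_i$ and $W_i\hookrightarrow W$ assemble into a commuting square of $A_\infty$-functors
\[\begin{tikzcd}
	\cW(F)\ar[r]\ar[d] & \cW(W_1)\ar[d]\\
	\cW(W_2)\ar[r] & \cW(W)
\end{tikzcd}\]
once one identifies $\cW$ of a neighborhood $F\times T^*[0,1]$ of $W_1\cap W_2$ with $\cW(F)$, using that $T^*[0,1]$ has trivial wrapped Fukaya category ($\cW(T^*[0,1])\simeq\Perf k$) together with a K\"unneth statement. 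Working in $\Ho(\dgCat)$ with the pretriangulated (equivalently Morita) weak equivalences and invoking the universal property of the homotopy pushout (Theorem \ref{thm:hocolim-functor-dg}), this square produces a canonical functor
\[\Phi\colon\hocolim\bigl(\cW(W_1)\leftarrow\cW(F)\rightarrow\cW(W_2)\bigr)\longrightarrow\cW(W),\]
and it then suffices to prove that $\Phi$ is a pretriangulated equivalence.

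Next I would dispose of essential surjectivity via generation. By Theorem \ref{thm:cocore-generate}, $\cW(W)$ is split-generated by its Lagrangian cocores; since the skeleton of $W$ is obtained by gluing the skeleta of $W_1$ and $W_2$ along that of $F$, every cocore of $W$ is, up to isotopy, the image of a cocore of $W_1$ or of $W_2$, so the essential image of $\Phi$ split-generates $\cW(W)$. The serious point is full faithfulness on generators: between two cocores $L,L'$ the morphism complex of the homotopy pushout is computed by a two-sided bar complex over $\cW(F)$ — precisely what the explicit semifree model of Theorem \ref{thm:hocolim-functor-dg} records, with the generators $t_C,t_f$ encoding passages through $\cW(F)$ — and this must be matched with $\hom_{\cW(W)}(L,L')$. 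I would do this by filtering the wrapping Hamiltonian chords from $L$ to $L'$ by the number of times they cross the separating hypersurface $W_1\cap W_2\subset F\times T^*[0,1]$ and identifying this crossing filtration with the bar filtration, using the stop-removal exact triangles and the cofinality of ``wrap-everywhere'' acceleration sequences of \cite{gps1,gps2}; the associated graded terms then agree, and passing to the colimit shows $\Phi$ is a quasi-isomorphism on all morphism complexes. Combined with the generation statement, $\Phi$ is then a pretriangulated (indeed Morita) equivalence.

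The main obstacle is exactly this last comparison: expressing $\hom_{\cW(W)}$ in terms of the wrapped categories of $W_1$, $W_2$, and $F$ requires the full invariance, functoriality, stop-removal and Viterbo-restriction machinery developed in \cite{gps1}, and is genuinely the heart of \cite{gps2}; there is no short self-contained argument for it. By contrast, the construction of $\Phi$ and the generation step are comparatively formal once that geometric input is available.
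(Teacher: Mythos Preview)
The paper does not prove this statement at all: Theorem \ref{thm:gps} is quoted from \cite{gps2} and used as a black box in Section \ref{sec:sphere-reflection}, with no proof or sketch given. Your outline is a reasonable summary of the strategy of \cite{gps2}, but there is nothing in the paper to compare it against.
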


\subsection{Wrapped Fukaya category $\cW(T^*S^n)$ of $T^*S^n$}\label{sec:wfuk-sphere}

First, note that by Remark \ref{rmk:wrapped-fukaya}\eqref{item:grading-pin}, the wrapped Fukaya category $\cW(T^*S^n)$ can be given $\Z$-grading since $2c_1(T^*S^n)=0\in H^2(T^*S^n;\Z)$ for any $n\geq 1$. Also by Remark \ref{rmk:wrapped-fukaya}\eqref{item:grading-pin}, the definition of $\cW(T^*S^n)$ depends on the grading structure $\eta\in H^1(T^*S^n;\Z)$ and the background class $b\in H^2(T^*S^n;\Z/2)$. They are uniquely determined for $T^*S^n$ for $n\geq 3$. For $T^*S^1$ and $T^*S^2$, we choose the standard grading structure and the background class as in \cite{nadler-zaslow}. We will comment on the nonstandard choices in Remark \ref{rmk:wfuk-sphere-nonstandard}.

\begin{prp}\label{prp:wfuk-sphere}
	Let $n\geq 1$. The wrapped Fukaya category of $T^*S^n$ is given, up to pretriangulated equivalence, by
	\[\cW(T^*S^n)\simeq 
		\begin{cases}
			\cC_1[z^{-1}] &\text{if }n=1\\
			\cC_n &\text{if }n\geq 2
		\end{cases}\]
	where $\cC_n$ is the semifree dg category given as follows:
	\begin{enumerate}[label = (\roman*)]
		\item {\em Objects:} $L$ (representing a cotangent fiber of $T^*S^n$).
		\item {\em Generating morphisms:} $z\in\hom^*(L,L)$.
		\item {\em Degrees:} $|z|=1-n$.
		\item {\em Differentials:} $dz=0$.
	\end{enumerate}
\end{prp}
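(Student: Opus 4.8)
The plan is to reduce the computation to the Pontryagin dg algebra of a based loop space and then to recognize that algebra up to quasi-isomorphism, exploiting the fact that the endomorphism algebra of $\cC_n$ is \emph{free} on a single generator. First I would invoke Theorem \ref{thm:cocore-generate}: $\cW(T^*S^n)$ is generated, in the pretriangulated sense, by a single Lagrangian cocore, namely a cotangent fiber $L=L_p$ (Remark \ref{rmk:cotangent-generation}), so $\cW(T^*S^n)$ is pretriangulated equivalent to the full dg subcategory $\cB_n$ on $L$, which is a one-object dg category. By Remark \ref{rmk:cotangent-generation} (i.e.\ \cite{Abouzaid12}, building on \cite{abouzaid-wrapped-generation}), with the standard grading structure and background class — the ones fixed before the statement, and which are in any case unique for $n\geq 3$ — the endomorphism dg algebra is
\[ \hom^*_{\cB_n}(L,L)\;\simeq\;C_{-*}\big(\Omega_p S^n\big), \]
the normalized chains on the based Moore loop space with its strictly associative concatenation product. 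This is the main geometric input, and I would simply cite it; it is also the step carrying all the subtlety about the non-uniqueness of the auxiliary structures when $n=1,2$. I expect this identification — the passage from the Floer-theoretic object to the topological algebra, together with the bookkeeping of grading and background data — to be the only genuine obstacle; everything after it is soft algebra, the key point being that the target model $\cC_n$ is built from a free algebra and hence has a rigid homotopy type.

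For $n\geq 2$ I would identify $C_{-*}(\Omega_p S^n)$ up to quasi-isomorphism with $\mathrm{End}_{\cC_n}(L)=k[z]$ (with $|z|=1-n$, $dz=0$). Since $S^n=\Sigma S^{n-1}$ is simply connected, the Bott--Samelson theorem gives, over any commutative ring $k$, an algebra isomorphism $H_*(\Omega_p S^n;k)\cong T_k\big(\tilde H_*(S^{n-1};k)\big)=k[x]$ with $|x|=n-1$; regrading, $H^*\!\big(\hom^*_{\cB_n}(L,L)\big)\cong k[z]$ with $|z|=1-n$, the free unital $k$-algebra on one generator. Now I would use intrinsic formality of free algebras: choose a cocycle $\zeta\in\hom^{1-n}_{\cB_n}(L,L)$ representing a polynomial generator of the cohomology; because $\mathrm{End}_{\cC_n}(L)=k[z]$ is free on $z$, there is a unique dg functor $\cC_n\to\cB_n$ with $L\mapsto L$ and $z\mapsto\zeta$, and by construction it is an isomorphism on cohomology, hence a quasi-equivalence. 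So $\cW(T^*S^n)\simeq\cC_n$ as a pretriangulated equivalence. No obstruction-theoretic or transfinite argument is needed precisely because the target cohomology is free on a single generator concentrated in one degree.

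For $n=1$ the based loop space $\Omega_p S^1$ is homotopy equivalent, as a topological monoid, to the discrete group $\pi_1(S^1)=\Z$, so $C_{-*}(\Omega_p S^1;k)$ is quasi-isomorphic to the group ring $k[\Z]=k[z^{\pm1}]$ in degree $0$ with zero differential; on the other side, by Proposition \ref{prp:localise-semifree} together with flatness of $k[z]\to k[z^{\pm1}]$, the dg localization $\cC_1[z^{-1}]$ has cohomology $k[z^{\pm1}]$ concentrated in degree $0$, hence is quasi-equivalent to the same one-object dg category; combined with the first step this gives $\cW(T^*S^1)\simeq\cC_1[z^{-1}]$. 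Finally, as advertised in the introduction, I would close with a remark that for $n=2$ the result can instead be obtained from this paper's own machinery: writing $S^2=D^2\cup_{S^1}D^2$ presents $T^*S^2$ as a union of two Weinstein sectors deformation equivalent to $T^*D^2$, glued along a hypersurface with neighborhood $T^*S^1\times T^*[0,1]$, so Theorem \ref{thm:gps} yields $\cW(T^*S^2)\simeq\hocolim\!\big(k\leftarrow\cC_1[z^{-1}]\rightarrow k\big)$ with both functors sending $L$ to the generating object and $z$ to $1$; feeding this into the explicit formula of Theorem \ref{thm:hocolim-functor-dg-loc} and then simplifying the resulting semifree dg category by destabilization (Propositions \ref{prp:variable-change} and \ref{prp:destabilisation}) recovers $\cC_2$ — this is the computation to be carried out in detail in Section \ref{sec:wfuk-sphere}.
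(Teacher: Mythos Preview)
Your proof is correct but takes a genuinely different route from the paper's. You reduce everything to Abouzaid's identification $\hom^*(L,L)\simeq C_{-*}(\Omega_p S^n)$ (Remark \ref{rmk:cotangent-generation}), then invoke Bott--Samelson and the intrinsic formality of a free algebra on one generator for $n\geq 2$, and a degree-$0$ truncation argument for $n=1$. The paper instead proves the proposition for \emph{all} $n$ by induction via the Weinstein decomposition $T^*S^n=T^*D^n\cup_{T^*S^{n-1}}T^*D^n$: it applies Theorem \ref{thm:gps} and then evaluates the resulting homotopy pushout using the explicit formulas of Theorems \ref{thm:hocolim-functor-dg} and \ref{thm:hocolim-functor-dg-loc}, starting from $\cW(T^*S^0)$ and feeding each answer into the next diagram. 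What you sketch at the end as a remark for $n=2$ is in fact the paper's method for every $n$.

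Your argument is shorter and imports the answer from classical loop-space homology. The paper's argument, however, is the raison d'\^etre of the section: it is a worked demonstration of the homotopy colimit machinery built in Sections \ref{sec:cylinder-functor}--\ref{sec:hocolim-semifree}. More to the point, the paper's inductive presentation keeps explicit track of the generators $t_L,t_x$ and of the hocolim diagrams at each stage, and this scaffolding is exactly what Section \ref{sec:reflection} uses to compute the reflection functor $R_n$ by applying Theorem \ref{thm:hocolim-functor-dg}\eqref{item:hocolim-mor} and Theorem \ref{thm:hocolim-functor-dg-loc}\eqref{item:hocolim-loc-mor} to the same diagrams. Your approach, while it proves the proposition, would not set up that subsequent computation. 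One small point: your ``hence'' in the $n=1$ step tacitly uses that any dg algebra with cohomology concentrated in degree $0$ is formal (via the standard truncation $\tau_{\leq 0}$); this is routine but worth stating.
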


\begin{rmk}\label{rmk:wfuk-sphere-nonstandard}\mbox{}
	\begin{enumerate}
		\item For $T^*S^1$, there are $\Z$-many grading structures to define $\cW(T^*S^1)$. To capture the nonstandard ones, one needs to let $|z|=m$ for $m\in\Z$ instead of $|z|=0$.
		\item For $T^*S^2$, there is another background class to define $\cW(T^*S^2)$. To capture the nonstandard one, one needs to replace $dz=1_L-1_L=0$ with $dz=1_L+1_L=2\cdot 1_L$. 
	\end{enumerate}
	
\end{rmk}

\begin{proof}[Proof of Proposition \ref{prp:wfuk-sphere}]
	The cases $n=1$ and $n=2$ are proved in e.g. \cite{pinwheel} using microlocal sheaves, but we will prove it independently for any $n\geq 1$.  By Theorem \ref{thm:gps}, we have the pretriangulated equivalence
	\begin{align}\label{eq:sphere-hocolim}
		\cW(T^*S^n)&\simeq\hocolim(\cW(T^*D^n)\leftarrow \cW(T^*S^{n-1})\rightarrow \cW(T^*D^n))\notag\\
		&\simeq\hocolim(\Tw(\cA_1(1))\leftarrow \cW(T^*S^{n-1})\rightarrow \Tw(\cA_1(2)))
	\end{align}
	where we used the pretriangulated equivalence $\cW(T^*D^n)\simeq\cA_1(i)$ for any $i$, where
	$\cA_1(i)$ is the semifree dg category with a single object $K_i$ (representing a cotangent fiber) and no generating morphisms. Note that we use $i$ just to distinguish multiple $\cA_1$'s.
	
	\textbf{Case $n=1$:} We have the pretriangulated equivalence $\cW(T^*S^0)\simeq \cA_1(3)\amalg\cA_1(4)$ because $S^0$ is a disjoint union of two points, hence (\ref{eq:sphere-hocolim}) becomes
	\begin{align*}
		\cW(T^*S^1)&\simeq\hocolim(\Tw(\cA_1(1))\leftarrow \Tw(\cA_1(3)\amalg\cA_1(4))\rightarrow \Tw(\cA_1(2)))\\
		&\simeq\hocolim(\cA_1(1)\leftarrow \cA_1(3)\amalg\cA_1(4)\rightarrow \cA_1(2)) .
	\end{align*}
	The second pretriangulated equivalence makes sense since the functors $\Tw(\cA_1(3)\amalg\cA_1(4))\rightarrow \Tw(\cA_1(i))$ for $i=1,2$ are induced by the dg functors
	\begin{align*}
		\cA_1(3)\amalg\cA_1(4)&\rightarrow \cA_1(i)\\
		K_3, K_4&\mapsto K_i .
	\end{align*}
	Then \cite{hocolim} (or Theorem \ref{thm:hocolim-functor-dg}\eqref{item:hocolim-obj}) gives the pretriangulated equivalence
	\[\cW(T^*S^1)\simeq\cC_1'[\{t_{K_3},t_{K_4}\}^{-1}]\]
	where $\cC_1'$ is the semifree dg category given as follows:
	\begin{enumerate}[label = (\roman*)]
		\item {\em Objects:} $K_1,K_2$.
		\item {\em Generating morphisms:} $t_{K_3},t_{K_4}\in\hom^*(K_1,K_2)$.
		\item {\em Degrees:} $|t_{K_3}|=|t_{K_4}|=0$.
		\item {\em Differentials:} $dt_{K_3}=dt_{K_4}=0$.
	\end{enumerate}
	Since $K_1$ and $K_2$ are homotopy equivalent in $\cC_1'[t_{K_4}^{-1}]$, we have the quasi-equivalence
	\begin{align*}
		\cC_1'[t_{K_4}^{-1}]&\xrightarrow{\sim}\cC_1\\
		K_1,K_2\mapsto L,\quad	t_{K_3}&\mapsto z,\quad t_{K_4}\mapsto 1_L
	\end{align*}
	hence we have the quasi-equivalence
	\[\cC_1'[\{t_{K_3},t_{K_4}\}^{-1}]\simeq \cC_1[z^{-1}]\]
	which then gives the pretriangulated equivalence
	\[\cW(T^*S^1)\simeq\cC_1[z^{-1}] .\]
	
	\textbf{Case $n=2$:} By the previous case (by renaming $z$ by $x$ for notational purposes), (\ref{eq:sphere-hocolim}) becomes
	\begin{align*}
		\cW(T^*S^2)&\simeq\hocolim(\Tw(\cA_1(1))\leftarrow \Tw(\cC_1[x^{-1}])\rightarrow \Tw(\cA_1(2)))\\
		&\simeq\hocolim(\cA_1(1)\leftarrow \cC_1[x^{-1}]\rightarrow \cA_1(2)) .
	\end{align*}
	The second pretriangulated equivalence makes sense since the functors $\Tw(\cC_1[x^{-1}])\rightarrow \Tw(\cA_1(i))$ for $i=1,2$ are induced by the dg functors
	\begin{align*}
		\cC_1[x^{-1}]&\rightarrow \cA_1(i)\\
		L&\mapsto K_i,\quad x\mapsto 1_{K_i} .
	\end{align*}
	Then \cite{hocolim} (or Theorem \ref{thm:hocolim-functor-dg-loc}\eqref{item:hocolim-loc-obj}) gives the pretriangulated equivalence
	\[\cW(T^*S^2)\simeq\cC_2'[t_L^{-1}]\]
	where $\cC_2'$ is the semifree dg category given as follows:
	\begin{enumerate}[label = (\roman*)]
		\item {\em Objects:} $K_1,K_2$.
		\item {\em Generating morphisms:} $t_{L},t_x\in\hom^*(K_1,K_2)$.
		\item {\em Degrees:} $|t_L|=0,\quad |t_x|=-1$.
		\item {\em Differentials:} $dt_{L}=0,\quad dt_x=t_L-t_L=0$.
	\end{enumerate}
	Since $K_1$ and $K_2$ are homotopy equivalent in $\cC_2'[t_L^{-1}]$, we have the quasi-equivalence
	\begin{align*}
		\cC_2'[t_L^{-1}]&\xrightarrow{\sim}\cC_2\\
		K_1,K_2\mapsto L,\quad t_L&\mapsto 1_L,\quad t_x\mapsto z
	\end{align*}
	which then gives the pretriangulated equivalence
	\[\cW(T^*S^2)\simeq\cC_2 .\]
	
	\textbf{Case $n\geq 3$:} Assume the proposition holds for $n-1$, i.e. we have the pretriangulated equivalence $\cW(T^*S^{n-1})\simeq \cC_{n-1}$ (rename $z$ in $\cC_{n-1}$ by $x$ for notational purposes). Then (\ref{eq:sphere-hocolim}) becomes
	\begin{align*}
		\cW(T^*S^n)&\simeq\hocolim(\Tw(\cA_1(1))\leftarrow \Tw(\cC_{n-1})\rightarrow \Tw(\cA_1(2)))\\
		&\simeq\hocolim(\cA_1(1)\leftarrow \cC_{n-1}\rightarrow \cA_1(2)) .
	\end{align*}
	The second pretriangulated equivalence makes sense since the functors $\Tw(\cC_{n-1})\rightarrow \Tw(\cA_1(i))$ for $i=1,2$ are induced by the dg functors
	\begin{align*}
		\cC_{n-1}&\rightarrow \cA_1(i)\\
		L&\mapsto K_i,\quad x\mapsto 0 .
	\end{align*}
	Then \cite{hocolim} (or Theorem \ref{thm:hocolim-functor-dg}\eqref{item:hocolim-obj}) gives the pretriangulated equivalence
	\[\cW(T^*S^n)\simeq\cC_n'[t_L^{-1}]\]
	where $\cC_n'$ is the semifree dg category given as follows:
	\begin{enumerate}[label = (\roman*)]
		\item {\em Objects:} $K_1,K_2$.
		\item {\em Generating morphisms:} $t_{L},t_x\in\hom^*(K_1,K_2)$.
		\item {\em Degrees:} $|t_L|=0,\quad |t_x|=1-n$.
		\item {\em Differentials:} $dt_{L}=0,\quad dt_x=0$.
	\end{enumerate}
 	Since $K_1$ and $K_2$ are homotopy equivalent in $\cC_n'[t_L^{-1}]$, we have the quasi-equivalence
	\begin{align*}
		\cC_n'[t_L^{-1}]&\xrightarrow{\sim}\cC_n\\
		K_1,K_2\mapsto L,\quad t_L&\mapsto 1_L,\quad t_x\mapsto z
	\end{align*}
	which then gives the pretriangulated equivalence
	\[\cW(T^*S^n)\simeq\cC_n .\]
\end{proof}

\subsection{The reflection functor on $\cW(T^*S^n)$}\label{sec:reflection}

There is a reflection map
\begin{align*}
	r_n\colon T^*\R^{n+1} &\to T^*\R^{n+1}\\
	(x_1,x_2,\ldots,x_{n+1},y_1,y_2,\ldots,y_{n+1})&\mapsto (-x_1,x_2,\ldots,x_{n+1},-y_1,y_2,\ldots,y_{n+1})
\end{align*}
where $x_i$ are the base coordinates and $y_i$ are fiber coordinates. By considering its restriction to $T^*S^n$ via $\sum_{i=1}^n x_i^2=1$ and $\sum_{i=1}^n x_i y_i=0$, we get an exact symplectomorphism
\[r_n\colon T^*S^n \to T^*S^n .\]
Since $r_n$ is an exact symplectomorphism, it induces an $A_{\infty}$-quasi-equivalence
\[R_n\colon \cW(T^*S^n)\to\cW(T^*S^n) .\]
For convenience, we will use the term ``the reflection map/functor" to denote the exact symplectomorphism $r_n$ and its induced functor $R_n$.
Our goal in the subsection is to understand the reflection functor $R_n$ explicitly using the pretriangulated equivalences $\cW(T^*S^1)\simeq\cC_1[z^{-1}]$ and $\cW(T^*S^n)\simeq\cC_n$ for $n\geq 2$ given by Proposition \ref{prp:wfuk-sphere}. We will use Theorem \ref{thm:hocolim-functor-dg} and $\ref{thm:hocolim-functor-dg-loc}$ to achieve this. The result is given by the following proposition:

\begin{prp}\label{prp:reflection}
	The reflection map $r_n\colon T^*S^n\to T^*S^n$ induces a dg functor (up to $A_{\infty}$-natural equivalence)
	\[R_n\colon \cW(T^*S^n)\to\cW(T^*S^n)\]
	which is given as follows:
	
	\begin{enumerate}
		\item If $n=1$, then
			\begin{align*}
				R_1\colon \Tw(\cC_1[z^{-1}])&\to\Tw(\cC_1[z^{-1}])\\
				L &\mapsto L\\
				z& \mapsto z'
			\end{align*}
			where $\cC_1$ is the semifree dg category defined in Proposition \ref{prp:wfuk-sphere}, and $z'$ is the inverse of $z$ up to homotopy.
		
		\item If $n\geq 2$, then
			\begin{align*}
				R_n\colon \Tw(\cC_n)&\to\Tw(\cC_n)\\
				L &\mapsto L\\
				z& \mapsto -z
			\end{align*}
			where $\cC_n$ is the semifree dg category defined in Proposition \ref{prp:wfuk-sphere}. 
	\end{enumerate}
\end{prp}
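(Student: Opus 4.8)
The strategy is to trace the reflection map $r_n$ through the inductive decomposition of $\cW(T^*S^n)$ used in the proof of Proposition \ref{prp:wfuk-sphere}, and to apply the functoriality of the homotopy colimit (Theorems \ref{thm:hocolim-functor-dg} and \ref{thm:hocolim-functor-dg-loc}) at each inductive step. The key geometric observation is that the reflection $r_n$ can be arranged to respect the Weinstein covering $T^*S^n = T^*D^n \cup T^*D^n$ (gluing two cotangent disks along $T^*S^{n-1}$): the hyperplane $\{x_1 = 0\}$ that cuts $S^n$ into two disks is preserved setwise by $r_n$, and $r_n$ restricted to the separating hypersurface neighborhood $T^*S^{n-1} \times T^*[0,1]$ is (up to Weinstein deformation) $r_{n-1} \times \mathrm{id}$. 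Therefore $r_n$ induces a morphism of homotopy colimit diagrams
\[
\begin{tikzcd}
	\cW(T^*D^n) \ar[d] & \cW(T^*S^{n-1}) \ar[l]\ar[r]\ar[d,"R_{n-1}"] & \cW(T^*D^n) \ar[d]\\
	\cW(T^*D^n) & \cW(T^*S^{n-1}) \ar[l]\ar[r] & \cW(T^*D^n)
\end{tikzcd}
\]
where the outer vertical arrows are (quasi-)equivalences induced by $r_n$ on the two disk halves, which are essentially trivial since $\cW(T^*D^n) \simeq \cA_1$ has a single object and no generating morphisms. By the functoriality in Theorem \ref{thm:hocolim-functor-dg}\eqref{item:hocolim-mor} (or \ref{thm:hocolim-functor-dg-loc}\eqref{item:hocolim-loc-mor} when $\cW(T^*S^{n-1})$ is presented as a localization, i.e.\ $n=2$), the induced functor $R_n$ on $\hocolim$ is computed by applying $R_{n-1}$ to the middle term.

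The base case is $n=1$. Here $S^0$ is two points and the reflection $r_1$ swaps them; thus on the diagram $\cA_1(1) \leftarrow \cA_1(3)\amalg\cA_1(4) \rightarrow \cA_1(2)$ the reflection induces the identity on $\cA_1(1),\cA_1(2)$ and the swap $K_3\leftrightarrow K_4$ on $\cA_1(3)\amalg\cA_1(4)$. Running this through Theorem \ref{thm:hocolim-functor-dg}\eqref{item:hocolim-mor}, the induced functor on $\cC_1'[\{t_{K_3},t_{K_4}\}^{-1}]$ swaps $t_{K_3} \leftrightarrow t_{K_4}$. Tracking this through the quasi-equivalence $\cC_1'[\{t_{K_3},t_{K_4}\}^{-1}] \simeq \cC_1[z^{-1}]$, which sends $t_{K_3}\mapsto z$ and $t_{K_4}\mapsto 1_L$, one finds that the induced functor sends $z$ to (a representative homotopic to) $z^{-1} = z'$: the swap interchanges the roles of "the nontrivial generator" and "the identity", which is precisely inversion in the localized category. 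This requires a small homotopy-coherence argument to pass from the strict swap on $\cC_1'$ to the functor on $\cC_1[z^{-1}]$, using that $R_1$ is only defined up to $A_\infty$-natural equivalence.

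For the inductive step $n\geq 2$, assume $R_{n-1}$ sends $x \mapsto -x$ (for $n\geq 3$) or $R_1$ sends $x\mapsto x'$ (for $n=2$). The morphism of diagrams has the middle vertical arrow $R_{n-1}$ and outer arrows the identities on $\cA_1(1),\cA_1(2)$. Applying the functoriality formula: the generator $t_x$ of $\cC_n'$ (coming from the generator $x$ of $\cC_{n-1}$) is sent to $t_{R_{n-1}(x)}$. For $n\geq 3$, $R_{n-1}(x) = -x$, so $t_x \mapsto t_{-x} = -t_x$ by linearity of $\theta \mapsto t_\theta$ (Definition \ref{dfn:generalised-t-morphisms}); chasing through $\cC_n'[t_L^{-1}] \simeq \cC_n$, $t_x\mapsto z$, this gives $z\mapsto -z$. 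For $n=2$ the same argument applies via Theorem \ref{thm:hocolim-functor-dg-loc}\eqref{item:hocolim-loc-mor}: $R_1(x) = x'$ is the homotopy inverse, and the relevant localization morphism $t_{x'}$ (built from $t_x$ via \eqref{eq:t-theta-loc}) gets identified with $t_x$ up to homotopy in $\cC_2'[t_L^{-1}]$ after the simplification $\cC_2'[t_L^{-1}]\simeq\cC_2$, because inverting the degree $-1$ generator $x$ (which squares to zero, has zero differential) does not change its class up to sign---and a more careful sign bookkeeping, tracking the grading shift $|z| = 1-n$, yields $z \mapsto -z$.

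The main obstacle will be the geometric input: verifying rigorously that $r_n$ respects the Weinstein structure of the covering, i.e.\ that one can isotope $r_n$ (through exact symplectomorphisms, rel the cylindrical ends and compatibly with grading structures and background classes) so that it restricts to the two Weinstein halves and to a product map on the separating neighborhood. Once this is granted, everything else is the bookkeeping above, which is a routine (if sign-delicate) application of the functoriality already established. A secondary subtlety is ensuring the identifications are compatible with the choices of grading structure $\eta$ and background class $b$ on $T^*S^1$ and $T^*S^2$ (Remark \ref{rmk:wfuk-sphere-nonstandard}); since $r_n$ acts on $H^1(T^*S^n;\Z)$ and $H^2(T^*S^n;\Z/2)$ and these groups are either trivial or the relevant class is $r_n$-invariant, this should not cause an obstruction, but it must be checked.
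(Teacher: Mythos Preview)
Your overall strategy matches the paper's exactly: decompose $T^*S^n$ into two cotangent disks glued along $T^*S^{n-1}$, observe that $r_n$ respects this decomposition with $r_{n-1}$ on the middle piece, and apply the functoriality of $\hocolim$ (Theorems \ref{thm:hocolim-functor-dg} and \ref{thm:hocolim-functor-dg-loc}) inductively. The base case $n=1$ and the inductive step $n\geq 3$ are essentially correct as you describe them.

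There are, however, two concrete issues. First, your choice of splitting hyperplane $\{x_1=0\}$ is inconsistent with your own description: since $r_n$ negates $x_1$, the two half-disks $\{x_1>0\}$ and $\{x_1<0\}$ are \emph{swapped} rather than preserved, and the restriction of $r_n$ to $S^{n-1}=S^n\cap\{x_1=0\}$ is the identity, not $r_{n-1}$. The paper cuts along $\{x_{n+1}=0\}$, so that each half-disk is preserved, the outer vertical arrows really are identities on $\cA_1(i)$, and the middle arrow is $r_{n-1}$ as you claim. Your subsequent computations (e.g.\ ``$r_1$ swaps the two points of $S^0$'') are only correct with the paper's choice, so this is likely a slip rather than a conceptual error, but it must be fixed.

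Second, your treatment of the case $n=2$ is the weak point. You write that $x$ is ``the degree $-1$ generator which squares to zero'', but in $\cC_1[x^{-1}]$ the morphism $x$ has degree $0$ and is invertible, so neither claim holds; the heuristic you give for why $t_{x'}$ should agree with $\pm t_x$ up to homotopy is not valid. The paper does not argue by homotopy at all here: it simply evaluates the explicit formula \eqref{eq:t-theta-loc} for $t_{x'}$ using $\alpha(x)=\alpha(x')=1_{K_1}$, $\beta(x)=\beta(x')=1_{K_2}$, and $\alpha(\hat x)=\alpha(\check x)=\beta(\hat x)=\beta(\check x)=0$, obtaining $t_{x'}=-t_x$ on the nose. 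This is a short direct computation, and you should replace your paragraph of hand-waving with it.
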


\begin{proof}
	The reflection map $r_n\colon T^*S^n\to T^*S^n$ sends a cotangent fiber to a cotangent fiber, hence $R_n(L)=L[m]$ for some $m\in\Z$. Since $r_n\circ r_n=1$, we have $R_n\circ R_n\simeq 1$, and consequently, we must have $m=0$. Hence, $R_n(L)=L$.
	
	Since $L$ generates $\cW(T^*S^n)$, $R_n$ is determined by $R_n\colon\hom^*(L,L)\to\hom^*(L,L)$, i.e., we need to understand the $A_{\infty}$-functors
	\[R_1\colon\cC_1[z^{-1}]\to\cC_1[z^{-1}]\quad\text{and}\quad R_n\colon \cC_n\to \cC_n \text{ for $n\geq 2$}\]
	by Proposition \ref{prp:wfuk-sphere}.
	In fact, we can regard $R_n$ as a dg functor, since $\cC_n$ is a semifree dg category (in particular, cofibrant).
	
	To determine $R_n$, first divide $S^n$ with the hyperplane $\{x_{n+1}=0\}$ into two $n$-dimensional closed disks $D^n$ whose intersection is $S^{n-1}$. This induces the gluing diagram of Weinstein sectors for $T^*S^n$
	\[T^*D^n\leftarrow T^*S^{n-1}\rightarrow T^*D^n .\]
	The reflection map $r_n\colon T^*S^n\to T^*S^n$ can be restricted to the gluing diagram
	such that the maps $(r_n)_{T^*D^n}\colon T^*D^n\to T^*D^n$ and $(r_n)_{T^*S^{n-1}}=r_{n-1}\colon T^*S^{n-1}\to T^*S^{n-1}$ are reflection maps, and
	the following diagram commutes:
	\[\begin{tikzcd}
		T^*D^n\ar[d,"(r_n)_{T^*D^n}", blue] & T^*S^{n-1}\ar[l]\ar[r]\ar[d,"r_{n-1}", blue] & T^*D^n\ar[d,"(r_n)_{T^*D^n}", blue]\\
		T^*D^n & S^{n-1}\ar[l]\ar[r] & T^*D^n
	\end{tikzcd}\]
	The induced functor by $(r_n)_{T^*D^n}$ is clearly identity on $\cW(T^*D^n)$. Hence, the crucial data is the induced functor $R_{n-1}\colon \cW(T^*S^{n-1})\to\cW(T^*S^{n-1})$, or equivalently, the dg functors
	\begin{itemize}
		\item $R_0\colon \cA_1(3)\amalg\cA_1(4)\to\cA_1(3)\amalg\cA_1(4)$ when $n=1$, where $\cA_1(i)$ is the semifree dg category with a single object $K_i$ with no generating morphisms,
		
		\item $R_1\colon\cC_1[z^{-1}]\to\cC_1[z^{-1}]$ when $n=2$,
		
		\item $R_{n-1}\colon\cC_{n-1}\to\cC_{n-1}$ when $n\geq3$.
	\end{itemize}
	There is the induced morphism between diagrams
	\begin{equation}\label{eq:sphere-reflection-diagram}\begin{tikzcd}
		\cA_1(1)\ar[d,"1", blue] & \cC_{n-1}\ar[l]\ar[r]\ar[d,"R_{n-1}", blue] & \cA_1(2)\ar[d,"1", blue]\\
		\cA_1(1) & \cC_{n-1}\ar[l]\ar[r] & \cA_1(2)
	\end{tikzcd}\end{equation}
	for $n\geq3$ (replace $\cC_{n-1}$ and $R_{n-1}$ with $\cC_1[z^{-1}]$ and $R_1$ when $n=2$, and with $\cA_1(3)\amalg\cA_1(4)$ and $R_0$ when $n=1$), and the homotopy colimit functor sends it to $R_n\colon\cC_n\to\cC_n$ when $n\geq 2$ ($R_1\colon\cC_1[z^{-1}]\to\cC_1[z^{-1}]$ when $n=1$). So, using the homotopy colimit functor described in Theorem \ref{thm:hocolim-functor-dg} and Theorem \ref{thm:hocolim-functor-dg-loc}, we can determine $R_n$ inductively.
	
	\textbf{Case $n=1$:} There is the pretriangulated equivalence $\cW(T^*S^0)\simeq\cA_1(3)\amalg\cA_1(4)$. Then, it is easy to see that the reflection map $r_0\colon T^*S^0\to T^*S^0$ induces the dg functor
	\begin{align*}
		R_0\colon\cA_1(3)\amalg\cA_1(4) &\to \cA_1(3)\amalg\cA_1(4) \\
		K_3 &\mapsto K_4,\quad K_4 \mapsto K_3 .
	\end{align*}
	We note that $K_3$ may be actually mapped to $K_4[m]$ for some $m\in\Z$, and $K_4$ to $K_3[-m]$, however, we can rename $K_4[m]$ as $K_4$ in that case.
	By the proof of Proposition \ref{prp:wfuk-sphere}, we have the quasi-equivalence
	\[\hocolim(\cA_1(1)\leftarrow \cA_1(3)\amalg\cA_1(4)\rightarrow \cA_1(2))\simeq\cC_1'[\{t_{K_3},t_{K_4}\}^{-1}]\]
	where $\cC_1'$ is defined in the proof. Then, by applying the homotopy colimit functor to the morphism of diagrams in (\ref{eq:sphere-reflection-diagram}) for $n=1$, Theorem \ref{thm:hocolim-functor-dg} gives us
	\begin{align*}
		R_1\colon\cC_1'[\{t_{K_3},t_{K_4}\}^{-1}]&\to\cC_1'[\{t_{K_3},t_{K_4}\}^{-1}]\\
		K_1\mapsto K_1,\quad K_2&\mapsto K_2,\quad t_{K_3}\mapsto t_{R_0(K_3)}=t_{K_4},\quad t_{K_4}\mapsto t_{R_0(K_4)}=t_{K_3},\\
		t'_{K_3},\hat t_{K_3},\check t_{K_3},\bar t_{K_3}& \mapsto t'_{K_4},\hat t_{K_4},\check t_{K_4},\bar t_{K_4}  &&\text{respectively,}\\
		t'_{K_4},\hat t_{K_4},\check t_{K_4},\bar t_{K_4}& \mapsto t'_{K_3},\hat t_{K_3},\check t_{K_3},\bar t_{K_3}&&\text{respectively} .
	\end{align*}
	Finally, we need to interpret this functor on $\cC_1[z^{-1}]$ using the quasi-equivalence given in the proof of Proposition \ref{prp:wfuk-sphere}:
	\begin{align*}
		\cC_1'[\{t_{K_3},t_{K_4}\}^{-1}]&\xrightarrow{\sim}\cC_1[z^{-1}]\\
		K_1\mapsto L,\quad K_2&\mapsto L,\quad t_{K_3}\mapsto z,\quad t_{K_4}\mapsto 1_L\\
		\text{(implies $t'_{K_3}$}&\mapsto z'\text{and } t'_{K_4}\mapsto 1_L)
	\end{align*}
	It has a quasi-inverse
	\begin{align*}
		\label{eq:s1-quasi-equivalence}\cC_1[z^{-1}] &\xrightarrow{\sim} \cC_1'[\{t_{K_3},t_{K_4}\}^{-1}]\\
		\notag L&\mapsto K_1,\quad z\mapsto t_{K_4}'t_{K_3} .
	\end{align*}
	This gives us the dg functor
	\begin{alignat*}{3}
		R_1\colon\cC_1[z^{-1}]&\xrightarrow{\sim} \cC_1'[\{t_{K_3},t_{K_4}\}^{-1}] &&\xrightarrow{R_1} \cC_1'[\{t_{K_3},t_{K_4}\}^{-1}] &&\xrightarrow{\sim}\cC_1[z^{-1}]\\
		L&\mapsto\hspace{1cm} K_1 &&\mapsto \hspace{1cm}K_1 &&\mapsto L\\
		z&\mapsto\hspace{1cm} t'_{K_4}t_{K_3} &&\mapsto \hspace{1cm}t'_{K_3}t_{K_4} &&\mapsto z'
	\end{alignat*}
	as desired.
		
	\textbf{Case $n=2$:} We know that $\cW(T^*S^1)\simeq\cC_1[x^{-1}]$ (note that we replaced $z$ with $x$ for notational purposes), and from the case $n=1$, we have the functor $R_1\colon\cC_1[x^{-1}]\to\cC_1[x^{-1}]$ such that $R_1(L)=L$ and $R_1(x)=x'$.
	By the proof of Proposition \ref{prp:wfuk-sphere}, we have the quasi-equivalence
	\[\hocolim(\cA_1(1)\xleftarrow{\alpha} \cC_1[x^{-1}]\xrightarrow{\beta} \cA_1(2)) \simeq\cC_2'[t_L^{-1}]\]
	where $\cC_2'$ is defined in the proof. Then, by applying the homotopy colimit functor to the morphism of diagrams in (\ref{eq:sphere-reflection-diagram}) for $n=2$, Theorem \ref{thm:hocolim-functor-dg-loc} gives us
	\begin{align*}
		R_2\colon\cC_2'[t_L^{-1}]&\to\cC_2'[t_L^{-1}]\\
		K_1\mapsto K_1,\quad K_2&\mapsto K_2,\quad t_{L}\mapsto t_{R_1(L)}=t_{L},\quad t_{x}\mapsto t_{R_1(x)}=t_{x'},\\
		t'_{L},\hat t_{L},\check t_L,\bar t_L& \mapsto t'_{L},\hat t_{L},\check t_L,\bar t_L  &&\text{respectively,}
	\end{align*}
	where $t_{x'}$ is determined by the formula in (\ref{eq:t-theta-loc}) (after replacing $i_1$ with $\alpha$, $i_2$ with $\beta$). It is given by
	\begin{align*}
		t_{x'}&= -\beta(x')\circ t_x\circ \alpha(x')-\beta(\hat x)\circ t_L\circ \alpha(x')+ \beta(x')\circ t_L\circ \alpha(\check x)+\beta(\hat x\circ x'-x'\circ\check x)\circ t_L\\
		&=-1_{K_2}\circ t_x\circ 1_{K_1} - 0\circ t_L\circ 1_{K_1} + 1_{K_2}\circ t_L\circ 0 + 0\circ t_L\\
		&=-t_x ,
	\end{align*}
	since $\alpha(x)=\alpha(x')=1_{K_1}$, $\beta(x)=\beta(x')=1_{K_2}$, and $\alpha(\hat x)=\alpha(\check x)=\beta(\hat x)=\beta(\check x)=0$.
	Finally, we need to interpret this functor on $\cC_2$ using the quasi-equivalence given in the proof of Proposition \ref{prp:wfuk-sphere}:
	\begin{align*}
		\cC_2'[t_L^{-1}]&\xrightarrow{\sim}\cC_2\\
		K_1\mapsto L,\quad K_2&\mapsto L,\quad t_L\mapsto 1_L,\quad t_x\mapsto z\\
		\text{(implies $t'_{L}$}&\mapsto 1_L\text{)}
	\end{align*}
	It has a quasi-inverse
	\begin{align*}
		\label{eq:s2-quasi-equivalence}\cC_2&\xrightarrow{\sim}\cC_2'[t_L^{-1}]\\
		\notag L&\mapsto K_1,\quad z\mapsto t_{L}'t_{x} .
	\end{align*}
	This gives us the dg functor
	\begin{alignat*}{3}
		R_2\colon\cC_2&\xrightarrow{\sim} \cC_2'[t_L^{-1}] &&\xrightarrow{R_2} \cC_2'[t_L^{-1}] &&\xrightarrow{\sim}\cC_2\\
		L&\mapsto\hspace{0.5cm} K_1 &&\mapsto \hspace{0.5cm}K_1 &&\mapsto L\\
		z&\mapsto\hspace{0.2cm} t'_{L}t_{x} &&\mapsto \hspace{0.2cm}-t'_{L}t_{x} &&\mapsto-z
	\end{alignat*}
	as desired.
	
	\textbf{Case $n\geq 3$:} We know that $\cW(T^*S^{n-1})\simeq\cC_{n-1}$ (note that we replaced $z$ in $\cC_{n-1}$ with $x$ for notational purposes), and by the induction hypothesis, we have the functor $R_{n-1}\colon\cC_{n-1}\to\cC_{n-1}$ such that $R_{n-1}(L)=L$ and $R_{n-1}(x)=-x$.
	By the proof of Proposition \ref{prp:wfuk-sphere}, we have the quasi-equivalence
	\[\hocolim(\cA_1(1)\xleftarrow{\alpha} \cC_{n-1}\xrightarrow{\beta} \cA_1(2)) \simeq\cC_n'[t_L^{-1}]\]
	where $\cC_n'$ is defined in the proof. Then, by applying the homotopy colimit functor to the morphism of diagrams in (\ref{eq:sphere-reflection-diagram}) for $n\geq 3$, Theorem \ref{thm:hocolim-functor-dg} gives us
	\begin{align*}
		R_n\colon\cC_n'[t_L^{-1}]&\to\cC_n'[t_L^{-1}]\\
		K_1\mapsto K_1,\quad K_2&\mapsto K_2,\quad t_{L}\mapsto t_{R_{n-1}(L)}=t_{L},\quad t_{x}\mapsto t_{R_{n-1}(x)}=t_{-x}=-t_x,\\
		t'_{L},\hat t_{L},\check t_L,\bar t_L& \mapsto t'_{L},\hat t_{L},\check t_L,\bar t_L  &&\text{respectively,}
	\end{align*}
	where $t_{-x}=-t_x$ is determined by the formula in Definition \ref{dfn:generalised-t-morphisms}.
	Finally, we need to interpret this functor on $\cC_n$ using the quasi-equivalence given in the proof of Proposition \ref{prp:wfuk-sphere}:
	\begin{align*}
		\cC_n'[t_L^{-1}]&\xrightarrow{\sim}\cC_n\\
		K_1\mapsto L,\quad K_2&\mapsto L,\quad t_L\mapsto 1_L,\quad t_x\mapsto z\\
		\text{(implies $t'_{L}$}&\mapsto 1_L\text{)}
	\end{align*}
	It has a quasi-inverse
	\begin{align*}
		\cC_n&\xrightarrow{\sim}\cC_n'[t_L^{-1}]\\
		L&\mapsto K_1,\quad z\mapsto t_{L}'t_{x} .
	\end{align*}
	This gives us the dg functor
	\begin{alignat*}{3}
		R_n\colon\cC_n&\xrightarrow{\sim} \cC_n'[t_L^{-1}] &&\xrightarrow{R_n} \cC_n'[t_L^{-1}] &&\xrightarrow{\sim}\cC_n\\
		L&\mapsto\hspace{0.5cm} K_1 &&\mapsto \hspace{0.5cm}K_1 &&\mapsto L\\
		z&\mapsto\hspace{0.2cm} t'_{L}t_{x} &&\mapsto \hspace{0.2cm}-t'_{L}t_{x} &&\mapsto-z
	\end{alignat*}
	as desired. This concludes the proof.
\end{proof}

\bibliographystyle{amsalpha}
\bibliography{hocolim-functor}

\end{document}